\definecolor{violet}{rgb}{0.0,0.2,0.7}
\definecolor{rouge2}{rgb}{0.8,0.0,0.2}
\theoremstyle{plain}
    \newtheorem{thm}{Theorem}[section]
	\newtheorem{lem}[thm]{Lemma}
	\newtheorem{prop}[thm]{Proposition}
	\newtheorem{cor}[thm]{Corollary}
\theoremstyle{plain}
	\newtheorem{bigthm}{Theorem}
	\newtheorem{bigprop}[bigthm]{Proposition}
	\newenvironment{taggedbigset}[1]
    {\taggedbigsetx}
    {\endtaggedbigsetx}
    \newtheorem*{bigrmk*}{Remark}
\theoremstyle{definition}
	\newtheorem{defn}[thm]{Definition}
	\newtheorem{nota}[thm]{Notation}
    \newtheorem{claim}[thm]{Claim}
    \newtheorem*{claim*}{Claim}
    \newtheorem*{ques*}{Question}
	\newtheorem*{ack*}{Acknowledgements}
\theoremstyle{remark}
	\newtheorem{rmk}[thm]{Remark}
	\newtheorem*{rmk*}{Remark}
	\newtheorem*{ans*}{Answer}
\numberwithin{equation}{section}
\newlist{steps}{enumerate}{1}
\setlist[steps, 1]{label = Step \arabic*:}
\DeclareMathSymbol{\lsb@l}{\mathalpha}{letters}{`l}
\DeclareFontFamily{U}{MnSymbolC}{}
\DeclareSymbolFont{MnSyC}{U}{MnSymbolC}{m}{n}
\DeclareFontShape{U}{MnSymbolC}{m}{n}{
	<-6>  MnSymbolC5
	<6-7>  MnSymbolC6
	<7-8>  MnSymbolC7
	<8-9>  MnSymbolC8
	<9-10> MnSymbolC9
	<10-12> MnSymbolC10
	<12->   MnSymbolC12}{}
\DeclareMathSymbol{\intprod}{\mathbin}{MnSyC}{'270}
\DeclareMathOperator{\Id}{Id}
\DeclareMathOperator{\im}{im}
\DeclareMathOperator{\tr}{tr}
\DeclareMathOperator{\pr}{pr}
\DeclareMathOperator{\Vol}{Vol}
\DeclareMathOperator{\Ric}{Ric}
\DeclareMathOperator{\supp}{supp}
\DeclareMathOperator{\dist}{dist}
\DeclareMathOperator{\PH}{PH}
\DeclareMathOperator{\PSH}{PSH}
\DeclareMathOperator{\MA}{MA}
\DeclareMathOperator{\CAP}{Cap}
\DeclareMathOperator*{\osc}{osc}
\DeclareMathOperator{\Exc}{Exc}
\DeclareMathOperator{\OT}{OT}
\DeclareMathOperator{\KE}{KE}
\DeclareMathOperator{\Aut}{Aut}
\DeclareMathOperator{\mult}{mult}
\def\1{\mathds{1}}
\def\D{\mathbf{D}}
\def\E{\mathbf{E}}
\newcommand{\ii}{\mathrm{i}}
\newcommand{\loc}{\mathrm{loc}}
\newcommand{\nmlz}{{\mathrm{norm}}}
\newcommand{\fibre}{{\mathrm{fibre}}}
\newcommand{\wX}{{\widetilde{X}}}
\newcommand{\wom}{{\widetilde{\omega}}}
\newcommand{\wvph}{{\widetilde{\varphi}}}
\newcommand{\wu}{{\widetilde{u}}}
\newcommand{\wv}{{\widetilde{v}}}
\newcommand{\bBD}{\overline{\mathbb{D}}}
\newcommand{\hu}{{\widehat{u}}}
\newcommand\sm{\sigma}
\newcommand\dt{\delta}
\newcommand\vep{\varepsilon}
\newcommand\vph{\varphi}
\newcommand\om{\omega}
\newcommand\ta{\theta}
\newcommand\gm{\gamma}
\newcommand\af{\alpha}
\newcommand\bt{\beta}
\newcommand\ld{\lambda}
\newcommand\zt{\zeta}
\newcommand\Om{\Omega}
\newcommand\Gm{\Gamma}
\newcommand\Ta{\Theta}
\newcommand\BN{\mathbb{N}}
\newcommand\BQ{\mathbb{Q}}
\newcommand\BR{\mathbb{R}}
\newcommand\BC{\mathbb{C}}
\newcommand\BB{\mathbb{B}}
\newcommand\BD{\mathbb{D}}
\newcommand\BP{\mathbb{P}}
\newcommand\CB{\mathcal{B}}
\newcommand\CC{\mathcal{C}}
\newcommand\CD{\mathcal{D}}
\newcommand\CE{\mathcal{E}}
\newcommand\CI{\mathcal{I}}
\newcommand\CL{\mathcal{L}}
\newcommand\CO{\mathcal{O}}
\newcommand\CU{\mathcal{U}}
\newcommand\CV{\mathcal{V}}
\newcommand\CW{\mathcal{W}}
\newcommand\CX{\mathcal{X}}
\newcommand\CY{\mathcal{Y}}
\newcommand\CZ{\mathcal{Z}}
\newcommand\lt{\left}
\newcommand\rt{\right}
\newcommand\pl{\partial}
\newcommand\db{\bar{\partial}}
\newcommand\ddb{\partial \bar{\partial}}
\newcommand\dd{\mathrm{d}}
\newcommand\dc{\mathrm{d}^{\mathrm{c}}}
\newcommand\ddc{\mathrm{d}\mathrm{d}^{\mathrm{c}}}
\newcommand\norm[1]{\left\lVert {#1} \right\rVert}
\newcommand\abs[1]{\left\lvert {#1} \right\rvert}
\newcommand\w{\wedge}
\newcommand\reg{\mathrm{reg}}
\newcommand\sing{\mathrm{sing}}
\newcommand\set[2]{\left\{ {#1} \, \middle| \, {#2} \right\}}
\newcommand\iprod[2]{\left\langle {#1}, {#2} \right\rangle}
\newcommand{\npp}[1]{{\left\langle #1 \right\rangle}}
\newcommand\res[2]{\left. {#1} \right|_{#2}} %Restriction symbol. I don't know if this command is too short.
\newcommand{\RN}[1]{\textup{\uppercase\expandafter{\romannumeral#1}}}
\newsavebox{\@brx}
\newcommand{\llangle}[1][]{\savebox{\@brx}{\(\m@th{#1\langle}\)}%
  \mathopen{\copy\@brx\kern-0.5\wd\@brx\usebox{\@brx}}}
\newcommand{\rrangle}[1][]{\savebox{\@brx}{\(\m@th{#1\rangle}\)}%
  \mathclose{\copy\@brx\kern-0.5\wd\@brx\usebox{\@brx}}}
\title{K\"ahler--Einstein metrics on families of Fano varieties}
\author{Chung-Ming Pan}
\address[Chung-Ming Pan]{Simons Laufer Mathematical Sciences Institute; 17 Gauss Way, Berkeley, CA 94720-5070, USA \qquad\qquad\qquad\qquad\qquad\qquad\qquad\qquad\qquad\qquad\qquad}
\email{\href{mailto:bandan770@gmail.com}{bandan770@gmail.com} \qquad\qquad\qquad\qquad\qquad\qquad\qquad\qquad\qquad\qquad\qquad}
\urladdr{\href{https://chungmingpan.github.io/}{https://chungmingpan.github.io/}}
\author{Antonio Trusiani}
\address[Antonio Trusiani]{Chalmers University of Technology; Chalmers tv\"argata 3, 41258 G\"oteborg, Sweden \qquad\qquad\qquad\qquad\qquad\qquad\qquad\qquad\qquad\qquad\qquad}
\email{\href{mailto:trusiani@chalmers.se}{trusiani@chalmers.se} \qquad\qquad\qquad\qquad\qquad\qquad\qquad\qquad\qquad\qquad\qquad}
\urladdr{\href{https://sites.google.com/view/antonio-trusiani/}{https://sites.google.com/view/antonio-trusiani/}}
\date{\today}
\subjclass{Primary: 32Q20, 32U05, 32W20, Secondary: 14D06}
\keywords{Singular K\"ahler--Einstein metrics, Families of complex spaces, Complex Monge--Amp\`ere operator, Fano varieties}
\begin{document} 

\maketitle

\begin{abstract}
Given a one-parameter family of $\BQ$-Fano varieties such that the central fibre admits a unique K\"ahler--Einstein metric, we provide an analytic method to show that the neighboring fibre admits a unique K\"ahler--Einstein metric. 
Our results go beyond by establishing uniform a priori estimates on the K\"ahler--Einstein potentials along fully degenerate families of $\BQ$-Fano varieties. 
In addition, we show the continuous variation of these K\"ahler--Einstein currents, and establish uniform Moser--Trudinger inequalities and uniform coercivity of the Ding functionals. 
Central to our article is introducing and studying a notion of convergence for quasi-plurisubharmonic functions within families of normal K\"ahler varieties. 
We show that the Monge--Amp\`ere energy is upper semi-continuous with respect to this topology, and we establish a Demailly--Koll\'ar result for functions with full Monge--Amp\`ere mass.
\end{abstract}

\tableofcontents

\section*{Introduction}
% why KE 
Finding a canonical metric on a compact K\"ahler manifold is a central topic in complex and differential geometry. 
K\"ahler--Einstein metrics are prototypical examples of canonical metrics. 
On a compact K\"ahler manifold $X$, a K\"ahler--Einstein metric $\om_{\KE}$ is a K\"ahler metric whose Ricci form is proportional to the K\"ahler form; namely $\Ric(\om_{\KE}) = \ld \om_{\KE}$ for some $\ld \in \{-1,0,1\}$.
The notion of K\"ahler--Einstein metrics still makes sense on mildly singular varieties (cf. \cite{EGZ_2009}). 
Such a metric is a genuine K\"ahler--Einstein metric on the smooth locus and has "bounded potential" near the singular set. 
Constructing a K\"ahler--Einstein metric boils down to solving a complex Monge--Amp\`ere equation: 
\[
    (\om + \ddc \vph)^n = e^{-\ld \vph} f \om^n
    \quad\text{with }
    \vph \in \PSH(X,\om) \cap L^\infty(X),
\]
where $\om$ is a fixed smooth metric in the desired cohomology class, $f$ is an $L^p$ density function for some $p > 1$, and $\vph$ is the unknown. 
We denoted by $\PSH(X,\om)$ the set of all $\omega$\emph{-plurisubharmonic functions} (see Section~\ref{sec:prelim} for the precise definition).

\smallskip
The solution to the (singular) Calabi conjecture \cite{Yau_1978, EGZ_2009} provides a powerful existence result of K\"ahler--Einstein metrics when the Ricci curvature is zero or negative, i.e. $\ld = -1$ or $0$.
However, if $X$ is Fano ($\ld$ > 0), the existence of K\"ahler--Einstein metrics in the anti-canonical class is more subtle.
In 2015, Chen, Donaldson and Sun \cite{CDS_2015_1, CDS_2015_2, CDS_2015_3} proved that a Fano manifold $X$ admits K\"ahler--Einstein metrics if and only if $X$ is K-(poly)stable.  

\smallskip
In birational geometry, Fano varieties and their families are central objects. 
Recall that a Fano variety is a normal variety such that $-K_X$ is a $\BQ$-ample line bundle (these are also called $\BQ$-Fano varieties in the literature).
They often have rich geometry as they could have many interesting birational models, and they are also terminal objects in the Minimal Model Program. 
From the moduli space point of view, singular Fano varieties naturally appear as degenerations of Fano manifolds.
The notion of K-stability plays a key role in recent developments in constructing well-behaved moduli spaces of Fano varieties (cf. \cite{Odaka_2015, Spotti_Sun_Yao_2016, Li_Wang_Xu_2019, Blum_Xu_2019}). 
It is natural to wonder about the behavior of K\"ahler--Einstein metrics in a family of K-stable Fano varieties.

\smallskip
Spotti, Sun, and Yao \cite{Spotti_Sun_Yao_2016} have shown that if a family $\pi: \CX \to \BD$ is a $\BQ$-Gorenstein, projective, \emph{smoothing} of a K-stable $\BQ$-Fano variety $X_0$, then the K\"ahler--Einstein potentials are uniformly bounded over a small neighborhood near $0$. 
Moreover, the K\"ahler--Einstein metrics vary smoothly away from the singular set.
Their approach relies on careful estimates regarding cone metrics, which are one of the main arguments of the original proof of Chen--Donaldson--Sun.

\smallskip
On the other hand, a variational approach \cite{BBGZ_2013,BBEGZ_2019,BBJ_2021} based on pluripotential theory plays an important role in constructing K\"ahler--Einstein metrics, and it also helps to simplify the proof of Chen--Donaldson--Sun. One of the strengths of this approach relies on its adaptability. On singular Fano varieties, Li--Tian--Wang \cite{Li_Tian_Wang_2021, Li_Tian_Wang_2022} and Li \cite{Li_2022} generalized the Yau--Tian--Donaldson correspondence by such a variational argument. 

\smallskip
The purpose of this article is to develop a relative version of pluripotential theory to study K\"ahler--Einstein metrics on families of Fano varieties. 
Our approach involves introducing a concept of convergence for quasi-plurisubharmonic functions within families and extending certain theorems of pluripotential theory to this context. 
Through a variational argument, we provide an analytic proof of the openness of the existence of K\"ahler--Einstein metrics.
Most significantly, we establish uniform a priori estimates of K\"ahler--Einstein potentials for fully degenerate families of Fano varieties.

\subsection*{Openness}
Our first result is the following (see Theorem~\ref{thm:openness_KE}): 

\begin{bigthm}\label{bigthm:openness_KE}
Let $\CX$ be an $(n+1)$-dimensional $\BQ$-Gorenstein variety and let $\pi: \CX \to \BD$ be a proper holomorphic surjective map with connected fibres.
Assume that $-K_{\CX/\BD}$ is relatively ample, $X_0$ is klt, and $\Aut(X_0)$ is discrete.
If the central fibre $X_0$ admits a K\"ahler--Einstein metric, so do the nearby fibres.
\end{bigthm}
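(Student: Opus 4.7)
\medskip

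\noindent\textbf{Proof proposal.} The plan is to translate the existence of a K\"ahler--Einstein metric on each fibre into a coercivity statement for the Ding functional, to prove that this coercivity is open in the base parameter using the family convergence machinery announced in the abstract, and to then invoke the standard variational recipe to produce a KE metric on nearby fibres.

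Fix a smooth Hermitian metric on $-K_{\CX/\BD}$ whose fibrewise curvature $\om_t$ is K\"ahler in $-K_{X_t}$, together with an adapted volume form $\mu_t$ on each $X_t$; openness of klt ensures $X_t$ remains klt for $t$ near $0$. The KE equation on $X_t$ reduces (up to a normalization constant) to the complex Monge--Amp\`ere equation $(\om_t + \ddc \vph_t)^n = e^{-\vph_t} \mu_t$, whose solutions in $\CE^1(X_t, \om_t)$ are precisely the minimizers of the Ding functional
$$D_t(\vph) \coloneqq - E_t(\vph) - \log \int_{X_t} e^{-\vph} \, d\mu_t.$$
Since $X_0$ is klt, admits a KE metric, and has discrete automorphism group, $D_0$ is coercive: there are $\delta_0, C_0 > 0$ with $D_0(\vph) \geq \delta_0 J_0(\vph) - C_0$ on $\CE^1(X_0, \om_0)$, i.e.\ a Moser--Trudinger inequality holds on $X_0$.

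The core step is to upgrade this to \emph{uniform} coercivity: to show that there exist $r, \delta, C > 0$ with $D_t(\vph) \geq \delta J_t(\vph) - C$ for all $|t| < r$ and all $\vph \in \CE^1(X_t, \om_t)$. I would argue by contradiction: suppose a sequence $t_k \to 0$ and $\vph_k \in \CE^1(X_{t_k}, \om_{t_k})$, normalized by $\sup_{X_{t_k}} \vph_k = 0$, witness the failure, so that $J_{t_k}(\vph_k) \to \infty$ while $D_{t_k}(\vph_k) / J_{t_k}(\vph_k) \to 0$. Using the compactness of normalized quasi-plurisubharmonic functions in the family convergence introduced in this paper (a Hartogs-type result across fibres), extract a subsequential limit $\vph_\infty \in \PSH(X_0, \om_0)$ with $\sup \vph_\infty = 0$. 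Apply the Demailly--Koll\'ar-type result of the paper to $\vph_\infty$ (using full MA mass and the klt hypothesis on $X_0$) to obtain a uniform integrability bound $\int_{X_{t_k}} e^{-\alpha \vph_k} \, d\mu_{t_k} \leq M$ for some $\alpha > 0$, which allows one to pass the entropy term to the limit. Combining this with the upper semi-continuity of the Monge--Amp\`ere energy $E$ under family convergence yields an inequality for $\vph_\infty$ on $X_0$ that contradicts the coercivity of $D_0$.

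Once uniform coercivity is available, the usual variational argument applies: for each $|t| < r$, lower semi-continuity of $D_t$ on $\CE^1(X_t, \om_t)$ and compactness of $J_t$-sublevel sets produce a finite-energy minimizer $\vph_t$, which is bounded by Ko{\l}odziej/EGZ-type $L^\infty$ estimates for the Monge--Amp\`ere equation on klt varieties, and hence defines a genuine KE potential on $X_t$. The main obstacle throughout is the passage to the limit in the contradiction step: the $\vph_k$ have energies diverging across varying fibres, so one cannot work inside a single $\CE^1$ or extract energy-Cauchy subsequences, and one must instead control the interplay of the entropy and MA energy through a degeneration of the ambient complex space. The family convergence of qpsh functions, USC of the MA energy, and the Demailly--Koll\'ar theorem for functions of full MA mass are precisely the ingredients that make this delicate limit procedure run, avoiding the geodesic-ray or non-Archimedean machinery that would not naturally see the varying fibre structure.
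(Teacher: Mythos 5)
Your overall frame (coercivity of $\D_0$ on the central fibre, family compactness, upper semi-continuity of the energy, Demailly--Koll\'ar in families, then a variational minimization on nearby fibres) is the same as the paper's, but the core step---the contradiction argument for \emph{uniform} coercivity---has a genuine gap, and it sits exactly where you announce you can avoid geodesics. If $J_{t_k}(\vph_k)\to\infty$, the sup-normalized sequence $(\vph_k)_k$ does subconverge in the family sense to some $\vph_\infty\in\PSH(X_0,\om_0)$ by the Hartogs-type compactness, but this limit carries no usable information: Proposition~\ref{bigprop:usc_energy} only gives $\limsup_k \E_{t_k}(\vph_k)\le \E_0(\vph_\infty)$, which is vacuous when the left-hand side is $-\infty$, and there is no reason for $\vph_\infty$ to lie in $\CE(X_0,\om_0)$, so Proposition~\ref{bigprop:DK} cannot be applied to the sequence. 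Moreover, a uniform Skoda-type bound $\int_{X_{t_k}}e^{-\af \vph_k}\dd\mu_{t_k}\le M$ for some small $\af>0$ does not control $\log\int_{X_{t_k}}e^{-\vph_k}\dd\mu_{t_k}$ relative to the diverging energies (if you had it with $\af=1$ you would get coercivity with constant $1$ for free, which is false in general). Finally, coercivity of $\D_0$ is a statement about \emph{all} potentials on $X_0$; no inequality satisfied by the single limit function $\vph_\infty$ can contradict it. The contradiction must be extracted from quantitative information along the sequence, and that information is lost in the limit precisely because the energies blow up.

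The paper circumvents this in two ways. First, it does not prove uniform coercivity at this stage at all: it only proves the weaker Claim~\ref{Claim} that, for $t$ close to $0$, every sup-normalized $u$ with $\D_t(u)\le 0$ satisfies $-\E_t(u)\le \frac{B+C_{SL}}{A}+\vep$; this energy bound on the sublevel set $\{\D_t\le 0\}$, combined with $L^1$-compactness and lower semi-continuity of $\D_t$ on the \emph{fixed} fibre $X_t$, already produces a minimizer, hence a K\"ahler--Einstein potential by \cite[Thm.~4.8]{BBEGZ_2019}. Second, inside the contradiction argument it uses Berndtsson's convexity along geodesic segments (Theorem~\ref{thm:Berndtsson_cvx}): replacing $u_k$ by the point $v_k$ at fixed distance $M+\vep$ along the unit-speed geodesic from $0$ to $u_k$ yields a sequence with $-\E_{t_k}(v_k)=M+\vep$ bounded and $\D_{t_k}(v_k)\le 0$, to which the Hartogs compactness, Proposition~\ref{bigprop:usc_energy}, and Proposition~\ref{bigprop:DK} genuinely apply (the limit lies in $\CE^1(X_0,\om_0)$), and coercivity of $\D_0$ then gives a numerical contradiction. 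The uniform Moser--Trudinger inequality you aim for as a first step is established in the paper only afterwards (Proposition~\ref{prop:Moser--Trudinger}) and relies on the uniform $L^\infty$-estimate of Theorem~\ref{thm:L_infty_est}, so it is not available at the point where you want to use it. To repair your proof, either insert the geodesic truncation device, or weaken the goal from uniform coercivity to the sublevel-set energy bound, as the paper does.
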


When $\pi: \CX \to \BD $ is smooth, Theorem \ref{bigthm:openness_KE} is known by Koiso \cite[Prop.~10.1]{Koiso_1983}. 
Spotti, Sun and Yao \cite[Thm.~1.1]{Spotti_Sun_Yao_2016} proved Theorem \ref{bigthm:openness_KE} when $\pi:\CX\to \BD$ is a smoothing of the central fibre.
Let us stress that we do not make any regularity assumptions on the fibres. 
Furthermore, the discreteness of the automorphism group on the central fibre is an important hypothesis as there are arbitrary small deformations of the Mukai--Umemura threefold which do not admit K\"ahler--Einstein metrics (see e.g. \cite[Cor.~1.3]{Tian_1997} and \cite[Cor.~1]{Donaldson_2007}). 
Also, since $\Aut(X_0)$ is discrete, the K\"ahler--Einstein metric is unique on each fibre close to $X_0$.

\smallskip
By the singular Yau--Tian--Donaldson correspondence \cite{Li_Tian_Wang_2022, Liu_Xu_Zhuang_2022}, a $\BQ$-Fano variety with discrete automorphism group 
admits a K\"ahler--Einstein metric if and only if it is K-stable. 
Consequently, the openness of a unique K\"ahler--Einstein metric can be derived by combining two deep works: the Yau--Tian Donaldson correspondence and the (Zariski) openness of K-stability proved by Blum and Liu \cite{Blum_Liu_2022}. 
Notably, our analytic approach to obtain Euclidean openness does not rely on the aforementioned two involved results. 

\subsection*{The uniform estimate}
The second and more important theme of this article is a uniform $L^\infty$-estimate of K\"ahler--Einstein potentials on degenerate families of $\BQ$-Fano varieties and a natural continuity result of K\"ahler--Einstein currents.

\smallskip
Establishing a uniform $L^\infty$-estimate of the K\"ahler--Einstein potentials solving the complex Monge--Amp\`ere equations is crucial in finding K\"ahler--Einstein metrics. 
Based on the pluripotential idea of Ko{\l}odziej \cite{Kolodziej_1998} (see also \cite{EGZ_2009}), Di Nezza, Guedj, and Guenancia \cite{DGG2020} obtained a very precise $L^\infty$-estimate of K\"ahler--Einstein potentials in cases of Calabi--Yau families and families of manifolds of general type.
In the Fano case, as explained before, the situation is more involved. 
We consider families of $\BQ$-Fano varieties as in Theorem~\ref{bigthm:openness_KE} and ask how the bound of the K\"ahler--Einstein potentials varies (see Theorem~\ref{thm:L_infty_est} and Corollary~\ref{cor:conti_KE_current}). 

\begin{bigthm}\label{bigthm:L_infty_est}
Under the setting of Theorem~\ref{bigthm:openness_KE},
let $\omega\in c_1(- K_{\CX/\BD})$ be a K\"ahler metric. 
Then there exists $C_{\MA}>0$ such that for all $t$ sufficiently close to $0$,
\[
    \osc_{X_t} \vph_{\KE,t} \leq C_{\MA},
\]
where $\om_{\KE,t}:=\om_{|X_t} + \ddc_t \varphi_{\KE,t}$ is the K\"ahler--Einstein metric on $X_t$.
    
In addition, $\vph_{\KE,t}\in \PSH(X_t,\om_t)$ varies smoothly away from the singular locus, and for all $k \in \{0,1, \cdots, n\}$, the following map is continuous near $0 \in \BD$
\[
    t \longmapsto \om_{\KE,t}^k \w [X_{t}] \in (\CD_{(n-k,n-k)}(\CX))',
\]
where $(\CD_{(n-k,n-k)}(\CX))'$ is the space of currents of bidimension $(n-k,n-k)$ (bidegree $(k+1,k+1)$).
\end{bigthm}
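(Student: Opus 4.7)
The plan is to combine the uniform Moser--Trudinger inequality (Proposition~\ref{prop:Moser--Trudinger}) with a family version of Ko{\l}odziej's $L^\infty$-estimate. Write the K\"ahler--Einstein equation on each fibre as $(\om_t + \ddc_t \vph_{\KE,t})^n = e^{-\vph_{\KE,t}} f_t\, \om_t^n$, where $f_t$ is the density of a fixed relative adapted volume form with respect to $\om_t^n$; the klt hypothesis on $X_0$, together with $\BQ$-Gorenstein relative ampleness of $-K_{\CX/\BD}$, forces $f_t \in L^p$ for some $p>1$ uniformly in $t$ near $0$. Normalize $\sup_{X_t} \vph_{\KE,t} = 0$, so that the desired estimate reduces to a uniform lower bound $\inf_{X_t} \vph_{\KE,t} \geq -C_{\MA}$.

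Step~1 is to uniformly control the exponential density. Proposition~\ref{prop:Moser--Trudinger} yields $\alpha > 0$ and $C > 0$, independent of $t$ near $0$, such that $\int_{X_t} e^{-\alpha \vph_{\KE,t}}\, \om_t^n \leq C$ for every normalized $\om_t$-psh function. Together with the uniform $L^p$ bound on $f_t$ and H\"older's inequality, this provides a uniform $L^r$ bound with $r>1$ on the right-hand side $e^{-\vph_{\KE,t}} f_t$ of the Monge--Amp\`ere equation.

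Step~2 runs a family version of the Ko{\l}odziej argument by contradiction. Assume $\inf_{X_{t_k}} \vph_{\KE,t_k} \to -\infty$ along a sequence $t_k \to 0$. By the compactness of normalized $\om$-psh families developed earlier in the paper, one extracts a limit $\vph_\infty \in \PSH(X_0, \om_0)$ in the sense of the paper's notion of convergence in families. The Demailly--Koll\'ar result for functions with full Monge--Amp\`ere mass, combined with the upper semi-continuity of the Monge--Amp\`ere energy and the uniform $L^r$ bound from Step~1, forces $\vph_\infty$ to have bounded potential and finite energy, contradicting the divergence of the infima. This delivers $\osc_{X_t} \vph_{\KE,t} \leq C_{\MA}$. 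The main obstacle is precisely this step: Ko{\l}odziej's argument relies on intrinsic volume--capacity comparisons which depend on $t$ and must be made uniform across degenerating fibres, and the paper's pluripotential machinery in families is designed precisely to overcome this.

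For the second part of the theorem, the uniform $L^\infty$-bound together with standard elliptic bootstrapping (Evans--Krylov plus Schauder) on exhaustions of the regular locus yields smooth convergence of $\vph_{\KE,t}$ to $\vph_{\KE,0}$ away from $\CX_{\sing}$. The continuity of $t \mapsto \om_{\KE,t}^k \wedge [X_t]$ in $(\CD_{(n-k,n-k)}(\CX))'$ then follows from the uniform $L^\infty$-bound combined with a family Bedford--Taylor-type continuity encoded in the paper's topology on psh families, using that $\vph_{\KE,t}$ itself is continuous in $t$ in this sense --- which one deduces from the uniqueness of the K\"ahler--Einstein potential given by Theorem~\ref{bigthm:openness_KE} together with the compactness properties of the family convergence.
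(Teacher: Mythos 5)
Your proposal has two genuine gaps. First, it is circular: in the paper Proposition~\ref{prop:Moser--Trudinger} is proved \emph{after} and \emph{from} Theorem~\ref{thm:L_infty_est} (its proof explicitly invokes the uniform bound $\osc_{X_t}\vph_{\KE,t}\leq C_{\MA}$ to pass between the two normalizations $\sup_{X_t}(u-\vph_{\KE,t})=0$ and $\sup_{X_t}u=0$, and uses the continuity of $t\mapsto\E_t(\vph_{\KE,t})$ from Corollary~\ref{cor:conti_KE_current}), so it cannot be an input to the $L^\infty$-estimate. Moreover, even granting it, your Step~1 misstates what it gives: the Moser--Trudinger bound is $\lVert e^{-u}\rVert_{L^\delta(\mu_t)}\leq Ce^{-\E_t(u)}$, so to get a uniform exponential bound for $\vph_{\KE,t}$ you still need a uniform lower bound on $\E_t(\vph_{\KE,t}-\sup_{X_t}\vph_{\KE,t})$, which you never establish. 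The paper obtains exactly this from the coercivity Claim~\ref{Claim} (since the normalized potential minimizes $\D_t$ and $\D_t(0)=0$), and then gets the uniform $L^\gamma(\mu_t)$ bound on $e^{-\vph_{\KE,t}}$ for \emph{every} $\gamma>1$ by a compactness-plus-contradiction argument resting on Proposition~\ref{bigprop:DK}; the fixed-exponent Skoda bound of Theorem~\ref{thm:SL_and_Skoda_in_family} alone is not enough for the H\"older step you describe.

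Second, your Step~2 does not work as a proof of the $L^\infty$-bound. Convergence in families is only $L^1$-type convergence on the regular locus, and under such convergence the infima $\inf_{X_{t_k}}\vph_{\KE,t_k}$ are not controlled by the limit: a sequence of $\om$-psh functions can converge in $L^1$ to a bounded (even smooth) function while its infima tend to $-\infty$, so ``$\vph_\infty$ has bounded potential and finite energy'' yields no contradiction with $\inf_{X_{t_k}}\vph_{\KE,t_k}\to-\infty$. (The paper cannot even prove continuity of the suprema along family convergence in general; see the discussion around Proposition~\ref{prop:usc_sup_conv}.) The quantitative content you are missing is precisely the uniform K\/o{\l}odziej-type estimate of \cite[Thm.~A]{DGG2020}, which the paper invokes after verifying its two hypotheses (uniform Skoda integrability of the adapted measures, and a uniform $L^q(\mu_t)$ bound on $e^{-q\vph_{\KE,t}}$ coming from the energy bound and the Demailly--Koll\'ar argument). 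For the second half of the statement your outline is closer to the paper, but note that Evans--Krylov requires a prior Laplacian estimate (the paper uses P\u{a}un's argument) and the continuity of $t\mapsto\om_{\KE,t}^k\w[X_t]$ needs, besides smooth convergence on the regular locus, a uniform control of the mass of $\om_{\KE,t}^k$ near the singular set, which the paper gets from the small-capacity neighborhoods of Lemma~\ref{lem:small_cap_near_sing} together with the $L^\infty$-bound; a generic appeal to ``family Bedford--Taylor continuity'' is not available.
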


In Theorem~\ref{bigthm:L_infty_est}, we denoted by $X_t := \pi^{-1}(t)$ the (schematic) fibre over $t \in \BD$. 
By Theorem~\ref{bigthm:openness_KE} the $\BQ$-Fano variety $X_t$ admits a unique K\"ahler--Einstein metric for all $t$ close to $0$. 
We normalize the K\"ahler--Einstein potential $\varphi_{\KE,t}\in \PSH(X_t,\om_t)$ so that it solves the complex Monge--Amp\`ere equation
\[
    \frac{1}{V}(\om_t + \ddc_t \vph_{\KE,t})^n = e^{-\vph_{\KE,t}} \mu_t
\]
where $V = \int_{X_t} \om_t^n$ is the volume of each fibre $X_t$, and $\mu_t$ is the adapted probability measure (see Definition~\ref{defn:adapted_measure}). 

\smallskip
Comparing with the result of Spotti--Sun--Yao \cite[Thm.~1.3]{Spotti_Sun_Yao_2016}, in Theorem~\ref{bigthm:L_infty_est}, we do not require the family to be a smoothing of the central fibre. 
Furthermore, our pluripotential proof does not rely on the cone angles approach. We also extend Theorem \ref{bigthm:L_infty_est} to log Fano pairs (see Theorem~\ref{thm:L_infty_log}). 
Namely, our method covers the part of uniform estimate in {\it loc. cit.} in more general situations.
For the stream of the article, we do not state the log version here (see Section~\ref{sec:log pairs} for details). 

\smallskip
Related to the previous Theorems~\ref{bigthm:openness_KE} and \ref{bigthm:L_infty_est} with the same hypothesis, we also prove a uniform version of Moser--Trudinger inequalities (see Proposition \ref{prop:Moser--Trudinger}). 
To the authors' knowledge, this result represents the first solution in families to Aubin's "hypoth\`ese fondamentale" \cite{Aubin_1984}.

\subsection*{Highlights of the proofs}
We introduce a notion of the $L^1$ (resp. $\CC^0, \CC^\infty$) \emph{convergence in families} to establish a relative version of pluripotential theory and variational approach. 
See Definition~\ref{defn:conv_in_family} for more details. 
We underline that the $L^1$-convergence in families is essentially equivalent to the convergence of the currents $\big(\om_t+\ddc_t u_t)\wedge [X_t]$ to $\big(\om_0+\ddc_0 u_0)\wedge [X_0]$ (see Proposition~\ref{prop:hartogs_currents}).
We also establish two important results in classical pluripotential theory along sequences convergent in the family sense. 
They represent the key ingredients in the proofs of Theorems \ref{bigthm:openness_KE} and \ref{bigthm:L_infty_est}.

\smallskip

On a fixed normal compact K\"ahler variety $(X,\om)$, the Monge--Amp\`ere energy functional $\E: \PSH(X,\om) \to \BR \cup \{-\infty\}$ is a primitive of the complex Monge--Amp\`ere operator (see Section \ref{sec:prelim} for more information). We obtain the upper semi-continuity of Monge--Amp\`ere energies along a sequence $L^1$ converging in families (see Proposition~\ref{prop:usc_energy_family}). 

\begin{bigprop}\label{bigprop:usc_energy}
Let $\pi: \CX \to \BD$ be a proper surjective holomorphic map with connected fibres, which forms a family of varieties. Assume that $X_0$ is normal, and let $\om$ be a K\"ahler metric given by the curvature of a positive $\BQ$-line bundle $\CL$ over $\CX$. If $u_k \in \PSH(X_{t_k}, \om_{t_k})$ $L^1$-converges to $u_0 \in \PSH(X_0, \om_0)$ then
\[
    \limsup_{k \to +\infty} \E_{t_k}(u_k) \leq \E_0 (u_0).
\]
\end{bigprop}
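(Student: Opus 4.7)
The plan is to adapt the classical Berman--Boucksom--Guedj--Zeriahi argument---that upper semi-continuity of the Monge--Amp\`ere energy follows from its concavity combined with approximation by continuous potentials---to the family setting, using the equivalence of the $L^1$-convergence in families with convergence of the currents $(\om+\ddc u_k)\wedge[X_{t_k}]$ (Proposition~\ref{prop:hartogs_currents}) as the main bridge between pluripotential theory on each fibre and on the total space $\CX$.

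\emph{Reduction to the bounded case and concavity setup.} I would first truncate by $u_k^j := \max(u_k,-j)$ and $u_0^j := \max(u_0,-j)$. Since $\max$ is $1$-Lipschitz and preserves $\om$-plurisubharmonicity, the truncated sequence still $L^1$-converges in families; monotonicity of $\E_t$ gives $\E_{t_k}(u_k) \leq \E_{t_k}(u_k^j)$, while $\E_0(u_0^j) \searrow \E_0(u_0)$ by continuity along decreasing sequences, reducing everything to the case of uniformly bounded potentials. The Monge--Amp\`ere energy is concave in its argument with differential $d\E_t|_v\cdot h = V^{-1}\!\int_{X_t} h\,(\om_t+\ddc_t v)^n$, so for any bounded $\om_t$-psh function $v$,
\begin{equation*}
    \E_t(u) \leq \E_t(v) + V^{-1}\!\int_{X_t}(u-v)\,(\om_t+\ddc_t v)^n. \tag{$\spadesuit$}
\end{equation*}

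\emph{Fubini--Study approximants and passage to the limit.} Using positivity of $\CL$, I would build a decreasing sequence of continuous $\om_0$-psh functions $v_\vep \searrow u_0$ on $X_0$ of Fubini--Study type, obtained from sections of $\ell\CL|_{X_0}$ with $\ell\gg 1$; by relative ampleness, after possibly shrinking $\BD$ these sections extend to $\ell\CL$-sections on $\CX$, producing continuous $\om$-psh extensions $V_\vep$ on $\CX$ with $V_\vep|_{X_0} = v_\vep$. Applying $(\spadesuit)$ on $X_{t_k}$ with $v = V_\vep|_{X_{t_k}}$ and letting $k\to\infty$, both summands on the right-hand side should converge to their $X_0$-counterparts: the energy term $\E_{t_k}(V_\vep|_{X_{t_k}})\to\E_0(v_\vep)$ by continuity of $\E$ in families for continuous potentials (via iterated Bedford--Taylor stability combined with Proposition~\ref{prop:hartogs_currents}); and the Monge--Amp\`ere integral because the measures $(\om_{t_k}+\ddc_{t_k}V_\vep|_{X_{t_k}})^n$ converge weakly in families to $(\om_0+\ddc_0 v_\vep)^n$, while the test function $u_k - V_\vep|_{X_{t_k}}$ decomposes into a continuous-in-families part $V_\vep|_{X_{t_k}}$ (controlled directly) and an $L^1$-in-families part $u_k$ (pairing correctly against a measure with continuous local potential). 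Letting $\vep \to 0$, the Monge--Amp\`ere integral vanishes since $u_0 - v_\vep \to 0$ in $L^1(X_0)$ and the total MA-masses equal $V$, while $\E_0(v_\vep) \to \E_0(u_0)$ by continuity along the decreasing sequence.

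\emph{Main obstacle.} The chief difficulty is constructing the global continuous $\om$-psh approximants $V_\vep$ on the possibly singular total space $\CX$ with the required family-continuity, which combines $\BQ$-ampleness of $\CL$ (to extend Fubini--Study sections from $X_0$ across $\CX$) with the $L^1$/$\CC^0$-convergence framework introduced earlier in the paper. A second delicate point is justifying the pairing of an $L^1$-in-family sequence $u_k$ against a Monge--Amp\`ere measure varying weakly in families; this goes through precisely because the measure $\omega_{V_\vep, t_k}^n$ is generated by the single continuous global potential $V_\vep$ on $\CX$, placing it in the regime where the Bedford--Taylor machinery combined with Proposition~\ref{prop:hartogs_currents} is effective.
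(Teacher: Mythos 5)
Your route (approximate the limit $u_0$ from above by Fubini--Study type potentials extended across the family, then use concavity of $\E$) is genuinely different from the paper's, which instead builds Bergman-kernel approximations of the \emph{sequence} $u_k$ with respect to the weighted inner products $\int_{X_k}\langle\cdot,\cdot\rangle_{h^m_k}e^{-mu_k}\om_k^n$, proves a uniform Ohsawa--Takegoshi extension on compact subsets of $\CX\setminus\CZ$ to get $u_k\leq u^m_k+\vep$ there, and concludes by monotonicity of $\E$ plus a capacity bound near the singular locus. Unfortunately your version has a genuine gap at its central analytic step: the convergence of the mixed term $\int_{X_{t_k}}u_k\,(\om_{t_k}+\ddc V_\vep|_{X_{t_k}})^n$ to $\int_{X_0}u_0\,(\om_0+\ddc v_\vep)^n$. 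Since $V_\vep$ is only continuous, the fibrewise Monge--Amp\`ere measures of $V_\vep$ may be singular with respect to $\om_t^n$ (they can charge Lebesgue-null sets), and $L^1$-convergence in families of the uniformly bounded $u_k$ gives no control of their integrals against such measures; this is essentially a convergence-in-capacity issue, which $L^1$-convergence does not supply. Proposition~\ref{prop:hartogs_currents} does not help here: its proof pairs $\ddc u_k\w[X_{t_k}]$ against \emph{smooth} test forms and integrates by parts onto $\ddc\eta$, using $L^2$-bounds of $u_k$ against $\om^n$; none of this applies when the ``test object'' is $(\om+\ddc V_\vep)^n\w[X_{t_k}]$. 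Note also that the paper's argument is arranged precisely so that the rough functions $u_k$ are never paired against singular measures: the one-sided bound $u_k\leq u^m_k+\vep$ together with monotonicity of $\E_k$ replaces your tangent-line inequality.

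A second, related gap is the construction of the decreasing continuous approximants $v_\vep\searrow u_0$ ``of Fubini--Study type'' on the central fibre: $X_0$ is a possibly singular normal variety, so neither Demailly regularization nor Ohsawa--Takegoshi is available there, and this is exactly the obstruction the paper emphasizes. One can likely repair this by embedding $X_0$ into $\BP^N$ via $\ell\CL|_{X_0}$, extending $u_0$ to an ambient quasi-psh function (Coman--Guedj--Zeriahi, \cite{CGZ_2013}), regularizing on $\BP^N$, and restricting; if in addition you arrange $V_\vep$ to be the restriction of a \emph{smooth} ambient function (rather than merely continuous), the pairing difficulty above becomes tractable: away from $\CZ$ the fibrewise densities vary smoothly in the trivializing charts, and near $\CZ$ the contribution is controlled by $\norm{u_k}_{L^\infty}\,(1+\norm{V_\vep}_{L^\infty})^n\CAP_{\om_{t_k}}(\CW\cap X_{t_k})$, which Lemma~\ref{lem:small_cap_near_sing} makes uniformly small. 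As written, however, the proposal asserts the key limit on the strength of ``Bedford--Taylor machinery plus Proposition~\ref{prop:hartogs_currents}'', which does not cover it, and the final step ``the Monge--Amp\`ere integral vanishes since $u_0-v_\vep\to0$ in $L^1$'' is likewise unjustified (though inessential, since $u_0\leq v_\vep$ makes that term nonpositive).
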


We denoted by $\E_t$ the Monge--Amp\`ere energy functional on the fibre $X_t$ with respect to $\om_t$. 
Let us stress that the Monge--Amp\`ere operator is not continuous along an $L^1$-convergent sequence (cf. \cite{Lelong_1983}). 
On a fixed variety, the upper semi-continuity of Monge--Amp\`ere energy is a consequence of the monotonicity of $\E$ and an envelope argument (see e.g. \cite[Prop.~10.19]{GZbook}). 
However, in the family setting, several difficulties appear in comparing functions on different fibres, varying complex structures and singularities. 
The proof consists in constructing and studying well-behaved algebraic approximations by the Bergman kernels of the line bundle, and then controlling the difference between algebraic approximations and the original sequence.

\smallskip
The second element is a Demailly--Koll\'ar type result along sequences of functions in the class of full Monge--Amp\`ere potentials (class $\CE$, see Section~\ref{sec:prelim} for the definition) converging in families (see Proposition~\ref{prop:Demailly_Kollar_family}): 

\begin{bigprop}\label{bigprop:DK}
Let $\pi: \CX \to \BD$ be a proper surjective holomorphic map with connected fibres, which forms a family of varieties. Assume that $X_0$ is normal, and let $\om$ be a K\"ahler form on $\CX$. If $u_k \in \CE(X_{t_k}, \om_{t_k})$ $L^1$-converges to $u_0 \in \CE(X_0, \om_0)$ then for all $\gm > 0$,
\[
    \int_{X_{t_k}} e^{-\gm u_k} \om_{t_k}^n 
    \xrightarrow[k \to + \infty]{} \int_{X_0} e^{-\gm u_0} \om_0^n.
\]
In addition, if $\pi:\CX \to \BD$ is a $\BQ$-Gorenstein family and $X_0$ has only klt singularities, then for all $\gm > 0$,
\[
    \int_{X_{t_k}} e^{-\gm u_k} \dd\mu_{t_k}
    \xrightarrow[k \to +\infty]{} \int_{X_0} e^{-\gm u_0} \dd\mu_0,
\]
where $\mu_t$ is an adapted measure on $X_t$ (see Section~\ref{sec:adapted_measure}).
\end{bigprop}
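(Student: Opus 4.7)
Following the classical Demailly--Koll\'ar strategy on a fixed variety, I would split the claim into a Fatou-type inequality bounding the liminf from below by the desired limit, together with a uniform integrability estimate across fibres giving the matching upper bound on the limsup. The central obstacle is the uniform integrability on varying fibres, which must preclude concentration of singularities as $t_k\to 0$.

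\emph{Step 1 (Fatou).} After normalising so that $\sup_{X_{t_k}} u_k = 0$ (compatible with the $L^1$-convergence in families), Proposition~\ref{prop:hartogs_currents} reformulates the hypothesis as weak convergence of the currents $(\om_{t_k}+\ddc_{t_k}u_k)\w[X_{t_k}]$ on the total space $\CX$. Standard pluripotential arguments then upgrade this to $L^1_\loc$-convergence of suitable plurisubharmonic lifts on a neighborhood of $X_0$ in $\CX$, hence to a.e.\ convergence along a subsequence. Together with the continuity of $\om_t^n$ in $t$, Fatou's lemma gives
\[
    \int_{X_0} e^{-\gm u_0}\,\om_0^n \;\leq\; \liminf_{k\to+\infty}\int_{X_{t_k}} e^{-\gm u_k}\,\om_{t_k}^n.
\]

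\emph{Step 2 (Uniform higher integrability).} For the reverse inequality I would establish that, whenever $\int_{X_0} e^{-\gm u_0}\om_0^n < +\infty$, there exist $\gm'>\gm$ and $C>0$ with $\int_{X_{t_k}} e^{-\gm' u_k}\om_{t_k}^n \leq C$ uniformly in $k$; combined with the a.e.\ convergence of Step~1, Vitali's theorem then yields $\limsup_k \int_{X_{t_k}} e^{-\gm u_k}\om_{t_k}^n \leq \int_{X_0} e^{-\gm u_0}\om_0^n$. To obtain this uniform bound I would couple the upper bound on $\E_{t_k}(u_k)$ provided by Proposition~\ref{bigprop:usc_energy} with the Bergman-kernel algebraic approximations of the $u_k$ used in the proof of that same proposition. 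These approximations lie in $\PSH$ with uniformly bounded Lelong numbers on nearby fibres, so a family version of Skoda's integrability theorem furnishes the required uniform bound, which transfers to $u_k$ itself by monotonicity.

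\emph{Step 3 (Adapted measure).} In the $\BQ$-Gorenstein, klt setting, the openness of klt in $\BQ$-Gorenstein families provides a uniform $L^{p_0}$-bound ($p_0 > 1$) on the density $d\mu_t/\om_t^n$ for $t$ close to $0$. H\"older's inequality, combined with Step~2 applied at a suitably larger exponent $\gm q_0$ (still in the admissible range by strong openness of the complex singularity exponent, when $\gm$ is below the threshold), then promotes uniform $\om_t^n$-integrability of $e^{-\gm u_k}$ to uniform $\mu_{t_k}$-integrability. Continuity of $\mu_t$ from Section~\ref{sec:adapted_measure} finishes the $\mu_t$-convergence via Vitali. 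The crux of the entire argument is Step~2: on a fixed variety a uniform Demailly--Koll\'ar bound on $\CE$ follows from Berman--Boucksom--Eyssidieux--Guedj--Zeriahi-style arguments, whereas in families one needs genuinely new tools---here, the upper semi-continuity of the Monge--Amp\`ere energy (Proposition~\ref{bigprop:usc_energy}) and the compactness available in the total space $\CX$.
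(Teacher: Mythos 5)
Your Steps 1 and 3 are essentially sound (the paper instead applies the classical Demailly--Koll\'ar theorem chart by chart on the smooth locus and handles the singular locus by H\"older plus a small-volume neighborhood, but your Fatou/Vitali variant and the klt/H\"older reduction would work \emph{granted} uniform exponential integrability). The genuine gap is Step 2, which you rightly call the crux but do not actually establish, and the route you propose for it fails on three counts. First, Proposition~\ref{bigprop:usc_energy} and its Bergman-kernel machinery require $\om$ to be the curvature of a positive $\BQ$-line bundle, an assumption absent from the present statement (here $\om$ is only a K\"ahler form on $\CX$); moreover the hypothesis is $u_k\in\CE(X_{t_k},\om_{t_k})$, not $\CE^1$, so $\E_{t_k}(u_k)$ may be $-\infty$, and in any case the \emph{upper} bound $\limsup_k\E_{t_k}(u_k)\leq\E_0(u_0)$ points the wrong way: it gives no control on $\int e^{-\gm' u_k}$. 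Second, ``uniformly bounded Lelong numbers'' cannot yield a uniform bound for \emph{every} $\gm>0$; Skoda-type integrability at exponent $\gm$ needs Lelong numbers below $\sim 2/\gm$, i.e.\ one needs them to be arbitrarily small uniformly in $k$ and in the point. This is exactly where the paper's new input lies: Proposition~\ref{prop:zero_lelong_num_of_E1_fcn} shows that functions in $\CE$ on a normal K\"ahler space have vanishing Demailly--Lelong numbers, and Lemma~\ref{lem:Skoda_for_E1_seq} propagates this to the nearby fibres via an upper semi-continuity of localized masses $\nu(v_k,x_k,r)$ deduced from the weak convergence of currents, Dini's lemma, and a uniform local Skoda estimate with the pluricomplex Green function. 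Third, the transfer ``to $u_k$ itself by monotonicity'' is backwards: the Ohsawa--Takegoshi-type inequality in the proof of Proposition~\ref{bigprop:usc_energy} gives $u_k\leq u_k^m+O(1/m)$, so the Bergman approximation dominates $u_k$ from above, and only on relatively compact subsets of $\CX\setminus\CZ$; an upper bound on $u_k$ gives a \emph{lower} bound on $e^{-\gm u_k}$ and hence no uniform integrability, and near the singular locus even that comparison is unavailable.

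In short, the uniform Skoda--Zeriahi estimate along the sequence (Lemma~\ref{lem:Skoda_for_E1_seq}) cannot be replaced by the energy upper semi-continuity plus Bergman approximations; you would need to prove, as the paper does, that the Demailly--Lelong numbers of the limit vanish and that the relevant localized masses converge upper semi-continuously in the family, before running the uniform local Skoda argument. With that lemma in hand, your Fatou/Vitali packaging of Steps 1 and 3 (including the klt case, where no ``threshold'' on $\gm$ is needed since the $\CE$-class forces infinite singularity exponents) would give a correct, slightly different organization of the proof.
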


The proof of the main result of Demailly--Koll\'ar \cite{Demailly_Kollar_2001} heavily relies on the Ohsawa--Takegoshi theorem, which is not available on singular varieties. 
Thus, Proposition~\ref{bigprop:DK} is not trivial even on a fixed fibre. 
We strongly use here the assumption of full Monge--Amp\`ere masses.

\subsection*{Organization of the article}
Section~\ref{sec:prelim} is a quick overview of pluripotential notions throughout the article. 
In Section~\ref{sec:conv_fami}, we introduce a concept of {\it convergence in families} and study Hartogs type properties along a convergent sequence in this sense. 
Section~\ref{sec:usc_energy} and Section~\ref{sec:DK_fami} are devoted to prove Proposition~\ref{bigprop:usc_energy} and Proposition~\ref{bigprop:DK}, respectively. 
Finally, in Section~\ref{sec:applications}, we combine the elements developed in the previous sections to establish Theorems~\ref{bigthm:openness_KE}~and~\ref{bigthm:L_infty_est}. 

\begin{ack*}
The authors are grateful to Vincent Guedj and Henri Guenancia for their constant support, suggestions, and encouragement.
The authors would like to thank Quang-Tuan Dang for carefully reading the first draft.
The authors are indebted to the anonymous referee for the valuable comments. 
C.-M. Pan is partially supported by the EUR MINT project ANR-18-EURE-0023 and the research project HERMETIC ANR-11-LABX-0040.
A. Trusiani is supported by the grant KAW 2020.0281 from the Knut and Alice Wallenberg Foundation.
\end{ack*}

\section{Preliminaries}\label{sec:prelim}
In this section, we review definitions and some properties of the variational approach that we are going to use in the sequel. 
We define the twisted exterior derivative $\dc := \frac{\ii}{4\pi} (\db - \pl)$ so that $\ddc = \frac{\ii}{2\pi} \ddb$. 
Let $(X,\om)$ be an $n$-dimensional compact normal K\"ahler variety. By variety, we mean an irreducible reduced complex analytic space. 

\subsection{Monge--Amp\`ere energy}\label{sec:prelim_MA_energy}
Denote by $\PSH(X,\om)$ the set of $\om$-plurisubharmonic ($\om$-psh) functions which are not identically $-\infty$. Elements $u\in\PSH(X,\om)$ are locally given as a sum of a plurisubharmonic function and a smooth function, and they satisfy $\omega+\ddc u\geq 0$ in the sense of currents (see \cite{Demailly_1985, EGZ_2009} for details on singular varieties).
By Bedford--Taylor's theory \cite{Bedford_Taylor_1982}, the Monge--Amp\`ere operator can be extended to globally bounded $\om$-psh functions on smooth domains. 
In the singular setting, the Monge--Amp\`ere operator of locally bounded psh functions can also be defined by taking zero through singular locus (cf. \cite{Demailly_1985}). 

\smallskip
For all $u \in \PSH(X,\om) \cap L^\infty(X)$, the Monge--Amp\`ere energy is explicitly given by 
\[
    \E(u) := \frac{1}{(n+1)V} \sum_{j=0}^n \int_X u (\om + \ddc u)^{j} \w \om^{n-j}
\]
where $V := \int_X \om^n$ is the volume of $X$ with respect to $\om$.
One can derive the following elementary properties:
\begin{itemize}
    \item for $u \in \PSH(X,\om) \cap L^\infty(X)$ and $c \in \BR$, $\E(u+c) = \E(u) + c$;
    \item for $u, v \in \PSH(X,\om) \cap L^\infty(X)$, if $u \leq v$, then $\E(u) \leq \E(v)$ and with equality if and only if $u=v$.
\end{itemize}
By the second property, $\E$ admits a unique extension to the whole $\PSH(X, \om)$ defined as
\[
    \E(u) := \inf\set{\E(v)}{u \leq v, \, v \in \PSH(X,\om) \cap L^\infty(X)} \in \BR \cup \{-\infty\}.
\]
Following the same proof as in \cite[Prop.~10.19]{GZbook}, one can check that such an extension $\E$ is still monotone and upper semi-continuous with respect to the $L^1$-convergence (see \cite[Page 36]{BBEGZ_2019}).
We set 
\[
    \CE^1(X,\om) := \set{u \in \PSH(X,\om)}{\E(u) > -\infty}
\]
for the finite (Monge--Amp\`ere) energy class.

\smallskip

The class $\CE(X,\om)$ of $\om$-psh functions with full Monge--Amp\`ere mass is defined as 
\[
    \CE(X,\om) := \set{u \in \PSH(X,\om)}{\lim_{j \to -\infty} {}^{\uparrow} \int_X \1_{\{u>-j\}} (\om + \ddc \max\{u,-j\})^n = V}.
\]
Note that one has $\CE^1(X,\om) \subset \CE(X,\om)$ by an argument similar to \cite[Prop. 10.16]{GZbook}.

\begin{rmk}
The reference \cite{GZbook} that we quoted before deals with a reference form $\om$, which is semi-positive and big on a compact K\"ahler manifold. 
In the singular setting, after resolving the singularities, the proof of the properties which we recalled in Section~\ref{sec:prelim_MA_energy} can be reduced to the same case on the desingularization. 
Since the Monge--Amp\`ere measure of bounded functions puts no mass on the exceptional set, one can deduce the properties on the singular cases.
\end{rmk}

\subsection{$L^1$ metric geometry}
The finite energy class $\CE^1(X,\omega)$ can be endowed with the distance
\[
    d_1(u,v):=\E(u)+\E(v)-2\E\big(P_\om(u,v)\big)
\]
(see \cite[Thm. 2.1]{Darvas_2017}, \cite[Thm. B]{Dinezza_Guedj_2018}). 
Here, we denote by 
\[
    P_\om(u,v) := \Big(\sup \set{w\in \PSH(X,\om)}{w \leq \min(u,v)}\Big)^\ast
\]
the largest $\om$-psh function smaller than the minimum between $u,v$ (see \cite{Ross_WittNystrom_2014}). 
The metric topology on $\big(\CE^1(X,\om),d_1\big)$ is the coarsest refinement of the $L^1$-topology that makes the Monge--Amp\`ere energy continuous, and it is related to the stability of solutions of complex Monge--Amp\`ere equations (see \cite{BBEGZ_2019, Trusiani_2023}). Moreover, $\big(\CE^1(X,\om),d_1\big)$ is a complete metric space and any two elements $u,v\in \CE^1(X,\om)$ can be joined by a (weak) geodesic segment given as a solution of a homogeneous Monge--Amp\`ere equation \cite[Thm. 2.1]{Darvas_2017}, \cite[Thm. A]{Dinezza_Guedj_2018}. 

\subsection{Variational approach to K\"ahler--Einstein metrics}
We now assume that $X$ has Kawamata log terminal (klt) singularities.
Namely, for any desingularization $p: Y \to X$,  
\[
    K_Y = p^\ast K_X + \sum_{i} a_i E_i 
    \quad\text{with } a_i > -1
\]
where $E_i$ are irreducible components of the exceptional divisor $\Exc(p)$.

\begin{defn}\label{defn:adapted_measure}
Suppose that $K_X$ is $m$-Cartier for some $m \in \BN$. 
Let $h^m$ be a smooth metric on $m K_X$. 
Taking $\Om$ a local generator of $m K_X$, the adapted measure associated with $h^m$ is defined as 
\[
    \mu_h := \ii^{n^2} \lt(\frac{\Om \w \overline{\Om}}{|\Om|_{h^m}^2}\rt)^\frac{1}{m}.
\]
Note that this definition does not depend on the choice of $\Om$. 
\end{defn}

By the klt assumption, one has $\mu_h = f \om^n$ for some $f \in L^p(X,\om^n)$ with $p>1$ (cf. \cite[Sec.~6]{EGZ_2009}). 
Rescaling $h$, we may assume that $\mu_h$ is a probability measure. 

\smallskip
Now, we further assume that $X$ is $\BQ$-Fano and $\om\in c_1(X)$. 
The Ding functional 
\[
    \D:\CE^1(X,\om)\to \BR
\] %\cite{Ding_1988}
associated with $\mu_h$ is 
\[
    \D(u) := -\E(u) - \log\lt(\int_X e^{-u} \dd\mu_h\rt).
\]
The Euler--Lagrange equation of $\D$ is the following complex Monge--Amp\`ere equation 
\begin{equation}\label{MAKE}
    \frac{1}{V}(\om + \ddc u)^n =  \frac{e^{-u} \mu_h}{\int_X e^{-u}\dd \mu_h}.
\end{equation}
Solutions in $\CE^1(X,\om)$ of (\ref{MAKE}) are actually genuine K\"ahler--Einstein metrics on the smooth locus, and the potentials are continuous on $X$ (see \cite[Thm.~A]{EGZ_2009} and \cite[Cor.~C]{CGZ_2013}).

\smallskip
We can now state the following analytic characterization on the existence of K\"ahler--Einstein metrics:

\begin{thm}[{\cite{Darvas_Rubinstein_2017, Dinezza_Guedj_2018}}]\label{Summ_Thm}
Suppose that $X$ is klt $\BQ$-Fano and $\Aut(X)$ is discrete. 
The following are equivalent:
\begin{enumerate}[label={(\roman*)}]
    \item\label{item:Summ_Thm_1} There exists a K\"ahler--Einstein metric;
    \item\label{item:Summ_Thm_2} The Ding functional is coercive, i.e. there exist $A \in (0,1)$ and $B \geq 0$ such that for all $u \in \CE_{\nmlz}^1(X,\om) := \set{v \in \CE^1(X,\om)}{\sup_X v = 0}$, 
    \[
        \D(u) \geq A(-\E(u)) - B.
    \]
\end{enumerate}
\end{thm}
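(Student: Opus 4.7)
The plan is a variational argument on the Ding functional $\D$ over $\CE^1(X,\om)$, combining upper semi-continuity of $\E$, a Demailly--Koll\'ar type continuity of the exponential integral on finite-energy classes, and a $d_1$-geodesic ray analysis.

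For \ref{item:Summ_Thm_2}$\Rightarrow$\ref{item:Summ_Thm_1}, I would take a minimizing sequence $(u_j) \subset \CE_{\nmlz}^1(X,\om)$ for $\D$. Coercivity combined with $\sup_X u_j = 0$ gives a uniform upper bound on $-\E(u_j)$, and the normalization $\sup u_j = 0$ ensures $L^1$-relative compactness via the classical Hartogs lemma. Extract a subsequential $L^1$-limit $u_\infty \in \PSH(X,\om)$ with $\sup_X u_\infty = 0$. Upper semi-continuity of $\E$ on $\PSH(X,\om)$ yields $\E(u_\infty) \geq \limsup_j \E(u_j) > -\infty$, so $u_\infty \in \CE_{\nmlz}^1(X,\om)$. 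A Demailly--Koll\'ar type continuity along $L^1$-convergent sequences in $\CE^1$ of uniformly bounded energy (the fixed-fibre analogue of Proposition~\ref{bigprop:DK}) gives $\log \int_X e^{-u_j} d\mu_h \to \log \int_X e^{-u_\infty} d\mu_h$. Hence $\D(u_\infty) \leq \liminf_j \D(u_j) = \inf \D$, so $u_\infty$ is a minimizer. Minimizers in $\CE^1$ solve the Euler--Lagrange equation \eqref{MAKE}, and by \cite[Thm.~A]{EGZ_2009} and \cite[Cor.~C]{CGZ_2013} the associated current $\om + \ddc u_\infty$ is a genuine K\"ahler--Einstein metric.

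For \ref{item:Summ_Thm_1}$\Rightarrow$\ref{item:Summ_Thm_2}, let $u_{\KE}$ be the unique K\"ahler--Einstein potential in $\CE_{\nmlz}^1$; it is a minimizer of $\D$ by convexity of $\D$ along weak geodesics. Assume for contradiction that $\D$ is not coercive, so there exists $(u_j) \subset \CE_{\nmlz}^1$ with $M_j := -\E(u_j) \to +\infty$ and $\D(u_j)/M_j \to 0$. Join $u_{\KE}$ to $u_j$ by the finite weak geodesic, parametrized by arc length in $d_1$, giving $\gm^j:[0,T_j]\to \CE^1$ with $T_j = d_1(u_{\KE},u_j)$ comparable to $M_j$. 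A diagonal extraction using $d_1$-completeness produces a sub-limit unit-speed $d_1$-geodesic ray $\gm:[0,+\infty)\to \CE^1$ starting at $u_{\KE}$. Convexity of $\D$ along $\gm^j$ and the hypothesis $\D(u_j) = o(T_j)$ force the asymptotic slope $\lim_{t\to\infty} \D(\gm_t)/t \leq 0$; minimality of $u_{\KE}$ gives $\D(\gm_t) \geq \D(u_{\KE})$, so the slope at $t=0$ is $\geq 0$. Convexity then forces $\D \circ \gm$ to be constant, so every $\gm_t$ is a K\"ahler--Einstein potential. The smooth dependence on $t$ of the corresponding K\"ahler--Einstein metrics on $X_\reg$ produces a non-trivial one-parameter subgroup inside the identity component of $\Aut(X)$, contradicting discreteness.

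The main obstacle is the geodesic ray extraction and rigidity step in \ref{item:Summ_Thm_1}$\Rightarrow$\ref{item:Summ_Thm_2}: on a singular klt $\BQ$-Fano variety one must rigorously construct $d_1$-geodesic rays emanating from $u_{\KE}$ as limits of finite segments, establish convexity and lower semi-continuity of $\D$ along them, and convert the constancy of $\D \circ \gm$ into a non-trivial holomorphic vector field on $X$. The foundational ingredients are the $d_1$-metric geometry and finite-energy pluripotential machinery of \cite{Darvas_2017, Dinezza_Guedj_2018}, whose singular-variety versions are invoked as a black box.
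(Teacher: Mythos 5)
The paper itself does not prove Theorem~\ref{Summ_Thm}: it is quoted from \cite{Darvas_Rubinstein_2017, Dinezza_Guedj_2018}, so your attempt has to be measured against those proofs (and against the closely related family argument in the proof of Theorem~\ref{thm:openness_KE}). Your direction \ref{item:Summ_Thm_2}$\Rightarrow$\ref{item:Summ_Thm_1} is the standard variational scheme and is fine in outline, provided you acknowledge the two black boxes it rests on: the fixed-fibre Demailly--Koll\'ar/Skoda continuity of $u\mapsto\int_X e^{-u}\dd\mu_h$ along $L^1$-convergent sequences in $\CE^1$ (the single-fibre case of Proposition~\ref{prop:Demailly_Kollar_family}, itself non-trivial on a klt variety), and the fact that minimizers of $\D$ in $\CE^1$ solve the Monge--Amp\`ere equation, which is \cite[Thm.~4.8]{BBEGZ_2019} and uses the differentiability of $\E$ composed with the envelope projection, not a formal Euler--Lagrange computation.

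The genuine gap is in the endgame of \ref{item:Summ_Thm_1}$\Rightarrow$\ref{item:Summ_Thm_2}. The Ding functional is invariant under translations $u\mapsto u+c$, and $t\mapsto \vph_{\KE}-t$ is a unit-speed $d_1$-geodesic ray along which $\D$ is constant; such $\D$-flat rays exist on every $\BQ$-Fano variety and have nothing to do with $\Aut(X)$. So constancy of $\D\circ\gm$ along your limit ray does not produce a one-parameter subgroup of automorphisms: each $\gm_t$ is a minimizer, hence a K\"ahler--Einstein potential, and uniqueness of the K\"ahler--Einstein current (since $\Aut^\circ(X)=\{\Id\}$, \cite[Thm.~5.1]{BBEGZ_2019}) forces $\om+\ddc\gm_t=\om_{\KE}$, i.e.\ $\gm_t=\vph_{\KE}+c_t$ with $|c_t|=t$ --- which is perfectly consistent with discreteness of $\Aut(X)$, and the argument terminates without a contradiction. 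Nothing in your construction excludes a drift of the approximating geodesics in the constants direction: sup-normalizing $u_j$ only gives $\sup_X\gm^j_t\leq\max(\sup_X\vph_{\KE},0)$ and $\E(\gm^j_t)\approx\E(\vph_{\KE})-t$, both compatible with the limit $\vph_{\KE}-t$. This is precisely where the cited proofs do extra work: the Darvas--Rubinstein properness principle is run modulo the group generated by $\Aut^\circ(X)$ \emph{and} the $\BR$-translations; one chooses the additive normalization of $u_j$ so as to almost realize the quotient distance $\inf_{c\in\BR} d_1(\vph_{\KE},u_j+c)$, takes the point $w_j$ at distance $1$ along the geodesic, upgrades the weak subsequential convergence of $(w_j)$ to $d_1$-convergence by combining the convergence of $\D(w_j)$ and of the exponential integrals to obtain $\E(w_j)\to\E(w_\infty)$, and then a drift along the translation orbit at the start of the geodesic would make $d_1(\vph_{\KE},u_j+c_j)$ exceed the quotient distance by a definite amount, contradicting the choice of $c_j$. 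A secondary but related defect: mere $L^1$-compactness only yields a $1$-Lipschitz limit curve (a priori it could even be constant), since $d_1$ is only lower semi-continuous under weak convergence; unit speed again requires the energy-convergence upgrade. So the skeleton is reasonable, but the key idea needed to neutralize the $\D$-flat translation direction is missing, and without it the claimed contradiction with discreteness of $\Aut(X)$ never materializes.
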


A key result in proving Theorem \ref{Summ_Thm} is the following convexity theorem due to Berndtsson~\cite{Berndtsson_2015} (see also \cite{BBEGZ_2019} for the singular setting).
\begin{thm}[{\cite[Thm.~1.1]{Berndtsson_2015}, \cite[Lem.~4.6]{BBEGZ_2019}}]\label{thm:Berndtsson_cvx}
Let $[0,1]\ni t\to u_t\in \CE^1(X,\omega)$ be a (weak) geodesic segment. 
Then 
\[
    [0,1]\ni t\to -\log\Big(\int_Xe^{-u_t}\dd \mu_h\Big)
\] 
is convex, while $[0,1]\ni t\to \E(u_t)$ is affine. 
In particular, the Ding functional is geodesically convex.
\end{thm}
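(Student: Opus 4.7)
The plan is to lift the geodesic to the annulus $A := \{\tau \in \BC : 1 < |\tau| < e\}$ by setting $U(x,\tau) := u_{-\log|\tau|}(x)$, a rotation-invariant function on $X \times A$. By the defining property of a weak geodesic, $\Om := p_X^\ast\om + \ddc U$ is a positive current on $X \times A$ satisfying the relative homogeneous Monge--Amp\`ere equation $\Om^{n+1} = 0$, where $p_X$ and $p_A$ denote the two projections. The affinity of $t \mapsto \E(u_t)$ and the convexity of $t \mapsto -\log \int_X e^{-u_t}\dd\mu_h$ will then be proven separately and combined to deduce geodesic convexity of the Ding functional $\D$.

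For the affinity of $\E(u_t)$, first restrict to bounded geodesics. A direct computation via integration by parts on $X \times A$ identifies the fiber integral $(p_A)_\ast \Om^{n+1}$ with a positive multiple of the distributional $\ddc$ of $\E(u_t)$ on $A$; since $\Om^{n+1} = 0$, the function $\tau \mapsto \E(U(\cdot,\tau))$ is harmonic on $A$, and rotation invariance forces $t \mapsto \E(u_t)$ to be affine. The $\CE^1$ case then follows by approximation: write $u_t$ as the decreasing limit, as $k \to \infty$, of the bounded geodesics $u_t^{(k)}$ connecting $\max(u_0, -k)$ and $\max(u_1, -k)$, apply affinity to each $u_t^{(k)}$, and pass to the limit using the monotonicity and upper semi-continuity of $\E$ along decreasing sequences.

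The convexity of $t \mapsto -\log \int_X e^{-u_t}\dd\mu_h$ is the core of Berndtsson's theorem, which I would implement via his positivity of direct images. Endow $p_X^\ast(-K_X) = -K_{X \times A/A}$ with the (semi-)positively curved singular Hermitian metric of weight $U + \phi_\om$, where $\phi_\om$ is a local potential of $\om$. The relative direct image $(p_A)_\ast \bigl(K_{X \times A/A} \otimes p_X^\ast(-K_X)\bigr) = (p_A)_\ast \CO_{X \times A}$ is the trivial line bundle $\CO_A$, and the induced $L^2$-metric on the tautological section $1$ has squared norm precisely $\int_{X_\tau} e^{-u_t}\dd\mu_h$ (up to a normalization constant built into $\mu_h$). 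Berndtsson's positivity theorem then yields semi-positive curvature of this metric, i.e. subharmonicity in $\tau$ of $-\log \int_X e^{-u_t}\dd\mu_h$ on $A$; rotation invariance upgrades subharmonicity to convexity in $t = -\log|\tau|$. The main obstacle is extending these computations to singular klt $X$ and to $\CE^1$ (rather than smooth, bounded) potentials: for the former, one would work on a log resolution $p: Y \to X$ where the klt assumption forces the pullback of $\mu_h$ to have $L^p$ density for some $p > 1$, and apply Berndtsson on the smooth $Y$; for the latter, combine a bounded truncation of the geodesic with Demailly's regularization of the endpoints, passing to the limit on the right-hand side via a Demailly--Koll\'ar-type continuity for $\int e^{-u}\dd\mu_h$ along such approximations, in the spirit of Proposition~\ref{bigprop:DK}.
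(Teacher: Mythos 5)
Your outline is exactly the argument behind the references the paper cites for this statement (which it does not reprove): Berndtsson's direct-image positivity applied to the trivial bundle $K_{X\times A/A}\otimes p_X^\ast(-K_X)$ with the $S^1$-invariant weight $U+\phi_\om$ gives the convexity of $t\mapsto-\log\int_X e^{-u_t}\dd\mu_h$, and fiber integration of $\Om^{n+1}=0$ plus truncation/monotone limits gives the affinity of $\E$, with the klt/resolution and $\CE^1$-approximation steps handled as in \cite[Lem.~4.6]{BBEGZ_2019}. So the proposal is correct and takes essentially the same route as the paper's source.
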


\section{Convergence of quasi-plurisubharmonic functions in families}\label{sec:conv_fami}
In this section, we define a notion of convergence of quasi-psh functions in families. 
Then we study Hartogs type properties with respect to this topology. 

\subsection{Setting and known facts}
In the sequel, we always assume that a family $\pi: \CX \to \BD$ fits in the following setup:

\begin{taggedbigset}{(GSN)}\label{sett:general_sett_normal_fibre}
Let $\CX$ be an $(n+1)$-dimensional, irreducible, and reduced complex K\"ahler space.
Let $\pi: \CX \to \BD$ denote a proper, surjective, holomorphic map such that each fibre $X_t := \pi^{-1}(t)$ is an irreducible, reduced, compact K\"ahler space for any $t \in \BD$. 
In addition, assume that $X_0$ is normal.

Let $\om$ be a K\"ahler metric on $\CX$. 
Namely, $\om$ is a K\"ahler metric $\om$ on $\CX^\reg$ such that given any local embedding $j: \CX \hookrightarrow \BC^N$, $\om$ extends smoothly to a K\"ahler metric defined on an open neighborhood of $\im(j)$.
For each $t \in \BD$, $\om$ induces a K\"ahler metric $\om_t$ on $X_t$ by restriction (i.e. $\om_t := \om_{|X_t}$).
\end{taggedbigset}

\begin{rmk}
The volume $V_t := \int_{X_t} \om_t^n$ is independent of $t \in \BD$ (cf. \cite[Lem.~2.2]{DGG2020}).
We shall denote it by $V$ in the following.
\end{rmk}

\begin{rmk}
From \cite[Thm.~12.2.1 (v)]{Grothendieck_EGA_4_3_1966}, normality is open on the base $\BD$ if the map $\pi$ is flat; namely, $X_t$ is normal for all $t$ sufficiently close to zero. 
On the other hand, if $X_t$ is normal for every $t \in \BD$, then so is $\CX$ by \cite[Cor.~5.12.7]{Grothendieck_EGA_4_2_1965}.
Therefore, up to shrinking $\BD$, we can assume that $\CX$ and $(X_t)_{t \in \BD}$ are normal. 
\end{rmk}

We provide the following example satisfying Setting~\ref{sett:general_sett_normal_fibre}: 
Consider 
\[
    \CX := \set{([x:y:z:w],t) \in \BP^3 \times \BC}{xyz + w^3 + tx^3 = 0} \subset \BP^3 \times \BC
\]
and $\pi: \CX \to \BC$ given by the restriction of the projection $\pr_2 : \BP^3 \times \BC \to \BC$.
The central fiber $X_0$ has three isolated singularities, and for $t \neq 0$, the general fiber $X_t$ has only two singular points. 
The fibres are normal as they are hypersurfaces in $\BP^3$ and have a singular set whose codimension is at least $2$. 
A K\"ahler metric $\om$ on $\CX$ can easily be obtained by the restriction of the product of the Fubini--Study metric on $\BP^3$ and the Euclidean metric on $\BC$. 

\smallskip
We recall uniform integrability results of Skoda--Zeriahi and Sup-$L^1$ comparison of quasi-psh functions in families from \cite[Thm.~2.9]{DGG2020} and \cite[Cor.~4.8]{Ou_2022}: 

\begin{thm}\label{thm:SL_and_Skoda_in_family}
In Setting~\ref{sett:general_sett_normal_fibre}, there exist constants $C_{SL} > 0, \af > 0$, and $C_\af > 0$ such that 
\[
    \sup_{X_t} \psi_t - C_{SL} \leq \frac{1}{V} \int_{X_t} \psi_t \om_t^n,
    \quad\text{and} \quad
    \int_{X_t} e^{- \af (\psi_t - \sup_{X_t} \psi_t)} \om_t^n \leq C_\af
\]
for all $t \in \bBD_{1/2}$ and for every $\psi_t \in \PSH(X_t, \om_t)$.
\end{thm}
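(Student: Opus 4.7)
The plan is to deduce both estimates by a compactness-and-contradiction argument that reduces them to classical pluripotential estimates on a fixed Euclidean ambient space. After shrinking $\BD$ I would cover $\CX$ by finitely many relatively compact open sets $U_\af$ admitting closed embeddings $j_\af \colon U_\af \hookrightarrow B_\af \subset \BC^{N_\af}$ such that $\om$ extends to a K\"ahler form $\tilde{\om}_\af = \ddc \rho_\af$ with smooth potential $\rho_\af$ on $B_\af$. For each $t$, the function $\psi_t|_{X_t \cap U_\af} + \rho_\af$ is psh on the analytic subset $j_\af(X_t \cap U_\af) \subset B_\af$, and by a standard extension theorem for psh functions on analytic subsets (for instance, the Coman--Guedj--Zeriahi extension result) it extends to a psh function $\tilde{\psi}_t^\af$ on an open neighborhood of $j_\af(X_t \cap U_\af)$ inside $B_\af$.

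The core technical step is a compactness statement: for any sequence $t_k \to t_\infty \in \bBD_{1/2}$ and $\psi_k \in \PSH(X_{t_k}, \om_{t_k})$ with $\sup_{X_{t_k}} \psi_k = 0$, one can extract a subsequence so that each local extension $\tilde{\psi}_k^\af$ converges in $L^1_{\loc}(B_\af)$ to a psh limit, and these local limits patch to a well-defined $\om_{t_\infty}$-psh function $\psi_\infty$ on $X_{t_\infty}$ satisfying $\sup_{X_{t_\infty}} \psi_\infty = 0$. Local compactness is the classical normal-family property for psh functions uniformly bounded above on a Euclidean ball; the gluing uses uniqueness of the limits on overlaps together with normality of the fibres to prevent $\psi_\infty$ from collapsing identically to $-\infty$.

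Granted this compactness, both estimates follow. The sup-$L^1$ comparison is obtained by contradiction: a sequence $(t_k, \psi_k)$ violating it would, through Fatou's lemma applied to the weak limit $\psi_\infty$, force the contradictory identity $\int_{X_{t_\infty}} \psi_\infty \, \om_{t_\infty}^n = -\infty$. For the uniform Skoda--Zeriahi estimate, I would apply the classical Skoda integrability theorem directly to the local extensions on each fixed Euclidean ball $B_\af$: the admissible exponent $\af$ and constant $C_\af$ on $B_\af$ depend only on the ambient geometry and on a uniform upper bound for the extensions, which is furnished by the compactness and the sup normalization. Summing over the finite cover $(U_\af)$ with a partition of unity then delivers the desired uniform estimate on each fibre $X_t$.

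The main obstacle is reconciling the varying complex structure on the fibres with the need for constants that do not depend on $t$: since the fibres $X_t$ can be arbitrarily singular and are only assumed normal, no pluripotential estimate intrinsic to them carries uniform constants a priori. The remedy, outlined above, is to transplant the analytic problem, via local embeddings, into fixed Euclidean balls where the classical theorems of H\"ormander--Skoda apply with constants depending only on the chosen cover. A secondary subtlety is ensuring that the sup normalization is preserved in the extracted limit, so that $\psi_\infty$ is a genuine psh function rather than the constant $-\infty$; this relies on the upper semicontinuity of the local limits combined with the normality of $X_0$.
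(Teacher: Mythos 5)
First, note that the paper does not actually prove Theorem~\ref{thm:SL_and_Skoda_in_family}: it is recalled from \cite[Thm.~2.9]{DGG2020} and \cite[Cor.~4.8]{Ou_2022}, so your proposal has to be judged as a self-contained argument, and as such it has a genuine gap at its central step. The classical Skoda integrability theorem, applied to the ambient extensions $\tilde{\psi}^\af_t$ on a fixed ball $B_\af\subset\BC^{N_\af}$, controls $\int_{B_\af}e^{-\af\tilde{\psi}^\af_t}\,\dd V_{\mathrm{eucl}}$, i.e.\ integrability against Lebesgue measure of the \emph{ambient} ball. What the theorem requires is a bound on $\int_{X_t}e^{-\af\psi_t}\,\om_t^n$, an integral against the measure $\om_t^n\w[X_t]$, which is carried by the fibre and hence singular with respect to ambient Lebesgue measure; no exponential integrability statement on $B_\af$ controls it, and "summing over a partition of unity" does not bridge this. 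The same mismatch undermines your sup-$L^1$ argument: $L^1_{\loc}$ convergence of the ambient extensions does not by itself give convergence, or even boundedness from below, of the fibre integrals $\int_{X_{t_k}}\psi_k\,\om_{t_k}^n$, so Fatou's lemma cannot be invoked where you invoke it. (In the paper this continuity of fibre integrals is Lemma~\ref{lem:L1_integral_converge}, whose proof already uses Theorem~\ref{thm:SL_and_Skoda_in_family}; making your step precise with those tools would be circular.)

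What is missing is a Skoda-type theorem adapted to the \emph{family of fibre measures} $(\om_t^n\w[X_t])_t$ rather than to ambient Lebesgue measure. The known proofs obtain this either from Zeriahi's uniform version of Skoda's theorem \cite{Zeriahi_2001}, which requires a uniform volume--capacity domination for these measures together with uniform bounds on the Lelong numbers (equivalently, local masses) of the $\psi_t$ along the fibres, or, equivalently, from sub-mean-value inequalities on analytic sets and pluricomplex Green function estimates performed on $X_t\cap B_\af$ itself; this is exactly the route of \cite[Thm.~2.9]{DGG2020} and the strategy the present paper reuses in Lemma~\ref{lem:Skoda_for_E1_seq}. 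Your compactness-and-gluing step (local normal families, patching via normality, preservation of the sup normalization through Hartogs' lemma at the maximizing points) is sound in spirit and close to arguments the paper does use elsewhere (Proposition~\ref{prop:Hartogs_conv_subseq_family}), and the extension of $\psi_t$ to an ambient neighborhood is essentially built into the definition of psh functions on complex spaces; but without the fibre-wise Skoda input described above, neither the uniform exponential integrability nor the uniform sup-$L^1$ comparison follows from the ambient estimates you propose.
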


\subsection{Definition of the convergence in families}\label{subsec:conv_in_fami}
Now, we are going to introduce a notion of convergence of quasi-psh functions on different fibres towards a quasi-psh function on the central fibre.
A similar concept has shown up in \cite[Prop.~6.6]{Bakker_Guenancia_Lehn_2022} on a locally trivial family.
It is natural to look for an analogous convergence in more general settings. 

\smallskip
We first fix some notation. 
Suppose that $\CZ$ is the singular set of the map $\pi$.
For each point $x \in X_{0}^\reg$, up to shrinking $\BD$, there are a chart $U_0 \Subset X_0^\reg$ containing $x$, an open subset $\CU \Subset \CX \setminus \CZ$ with $\CU \cap X_0 = U_0$, and an isomorphism $F: U_0 \times \BD \to \CU$ such that the diagram
\[
\begin{tikzcd}
    F: U_0 \times \BD \ar[rr, "\sim"] \ar[rd, "\pr_2"']&& \CU \ar[ld, "\pi"]\\
    &\BD&
\end{tikzcd}
\]
commutes and $F_{|U_0} = \Id_{U_0}$.
We denote by 
\[
    F_t: U_0 \overset{\sim}{\longrightarrow} U_t := \CU \cap X_t
\] 
the isomorphism induced by $F$. 

\begin{defn}\label{defn:conv_in_family}
For all $k \in \BN$, let $u_{t_k}$ be a $\om_{t_k}$-psh function on $X_{t_k}$ and $t_k \to 0$ as $k \to +\infty$.
We say that the sequence $(u_{t_k})_k$ converges to $u_{0} \in \PSH(X_0, \om_{0})$ in $L^1$ (resp. $\CC^0$, $\CC^\infty$) if for all data $(U_0, F, \CU)$ as above, $u_{t_k} \circ F_{t_k}$ converges to $u_{0}$ in $L^1(U_0)$ (resp. $\CC^0(U_0)$, $\CC^\infty(U_0)$).
\end{defn}

In the sequel, we call such a sense of $L^1$ (resp. $\CC^0$, $\CC^\infty$) convergence the {\it convergence in families} (resp. {\it $\CC^0$, $\CC^\infty$-convergence in families}) or we say a sequence {\it converging in the family sense} (resp. {\it $\CC^0$, $\CC^\infty$-converging in the family sense}). 

\smallskip
To check Definition~\ref{defn:conv_in_family} is well-defined, we recall a consequence of Hartogs lemma (see e.g. \cite[Thm.~1.46]{GZbook}):
\begin{lem}\label{lem:L1_conv_limsup}
Let $\Om$ be a domain in $\BC^n$. 
If $(u_k)_{k \in \BN} \in \PSH(\Om)^{\BN}$ converges to a psh function $u$ in $L^1_{\loc}(\Om)$, then for any sequence $(x_k)_k$ converging to $x \in \Om$,
\[
    u(x) \geq \limsup_{k \to +\infty} u_k (x_k).
\]
\end{lem}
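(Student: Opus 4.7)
The plan is to combine the sub-mean value inequality for psh functions with Fatou's lemma, exploiting that $u$ is locally bounded above and upper semi-continuous. This is a routine extension of the classical Hartogs-type argument to varying centers.

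Concretely, I would first fix $r > 0$ small enough that $\overline{B(x, 2r)} \subset \Om$. For $k$ large, we have $x_k \in B(x, r)$, so applying the sub-mean value inequality to $u_k$ at $x_k$ gives
\[
    u_k(x_k) \leq \frac{1}{\Vol(B(x_k, r))} \int_{B(x_k, r)} u_k \, dV.
\]
The $L^1_\loc$ hypothesis immediately bounds
\[
    \left| \int_{B(x_k,r)} u_k \, dV - \int_{B(x_k,r)} u \, dV \right| \leq \int_{B(x,2r)} |u_k - u| \, dV \xrightarrow[k \to +\infty]{} 0,
\]
reducing the task to controlling $\limsup_k \int_{B(x_k,r)} u \, dV$ by $\int_{B(x,r)} u \, dV$. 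After translating both integrals to $B(0, r)$, I would derive this last inequality from Fatou's lemma applied to the non-negative functions $M - u(x_k + \cdot)$, where $M$ is an upper bound of $u$ on $\overline{B(x, 2r)}$ (which exists because $u$ is psh, hence locally bounded above), combined with the USC pointwise inequality $\limsup_k u(x_k + y) \leq u(x + y)$.

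Chaining the estimates yields $\limsup_k u_k(x_k) \leq \frac{1}{\Vol(B(x, r))} \int_{B(x, r)} u \, dV$, and letting $r \to 0^+$ the right-hand side tends to $u(x)$ by the standard continuity of ball averages for psh functions. The only genuinely delicate step is the comparison of $\int_{B(x_k, r)} u \, dV$ with $\int_{B(x, r)} u \, dV$, since pointwise convergence $u(x_k + y) \to u(x + y)$ may fail at points of the polar set of $u$; USC combined with Fatou's lemma circumvents this issue without requiring any refined information about that polar set.
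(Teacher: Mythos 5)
Your argument is correct: the sub-mean value inequality at the moving centers $x_k$, the $L^1_{\loc}$ comparison on $B(x,2r)$, the Fatou/upper semi-continuity step for the translated averages of $u$, and the limit $r\to 0^+$ of ball averages all go through as you describe. The paper gives no proof of this lemma, citing instead the classical Hartogs lemma \cite[Thm.~1.46]{GZbook}; your write-up is essentially the standard argument underlying that reference, adapted directly to varying centers, so it is consistent with the paper's (implicit) approach.
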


\begin{rmk}\label{rmk:limit_well_defined}
Using Lemma~\ref{lem:L1_conv_limsup}, one can prove that the convergence from Definition~\ref{defn:conv_in_family} does not depend on the choice of charts and local isomorphisms.
To see this, we let $u_1$ (resp. $u_2$) be the $L^1$-limit of $\phi_k := u_{t_k} \circ F_{t_k}$ (resp. $\psi_k := u_{t_k} \circ G_{t_k}$) on $U_0$ (resp. $V_0$).
Fix $x \in U_0 \cap V_0$ and consider $x_k := (F_{t_k})^{-1} \circ G_{t_k}(x)$ which converges to $x$ as $k \to +\infty$.
Using Lemma~\ref{lem:L1_conv_limsup}, we obtain 
\[
    u_1(x) \geq \limsup_{k \to +\infty} \phi_k(x_k) 
    = \limsup_{k \to +\infty} \psi_k (x).
\]
According to the classical Hartogs lemma (cf. \cite[Thm.~1.46]{GZbook}), 
one has 
\[
    u_2(x) \geq \limsup_{k \to +\infty} \psi_k(x)
\] 
and the equality holds almost everywhere; 
hence $u_1 \geq u_2$ almost everywhere on $U_0 \cap V_0$ and so is everywhere.
Then exchanging $u_1$ and $u_2$ in the argument, one can infer $u_2 \geq u_1$ on $U_0 \cap V_0$.
This completes the proof.
\end{rmk}

\begin{nota}\label{nota}
In the sequel, we shall simply denote $t_k$ by $k$, as well as $X_k := X_{t_k}$ and $\om_k := \om_{t_k}$.
We often write 
\[
    (u_k)_{k \in \BN} \in \PSH_{\fibre}(\CX, \om)
\] 
for a sequence such that $u_k \in \PSH(X_k, \om_k)$ for each $k$.

\smallskip

On the other hand, let $\CZ$ be the singular locus of $\pi$. 
For an open subset $\CU \Subset \CX \setminus \CZ$, we say that the data $(B_\af, F_\af, \CB_\af)_\af$ is a locally trivial finite cover of $\CU$ if 
\begin{itemize}
    \item the set of indices is finite;
    \item each $B_\af$ is a relatively compact open subset on $X_0^\reg$ and $B_\af$ is isomorphic to a ball of radius $r_\af$ in $\BC^n$; we also set $B_\af = B^{X_0} (x_\af, r_\af)$ where $x_\af$ is the center; 
    \item up to shrinking $\BD$, $F_\af$ is an isomorphism between $B_\af \times \BD$ and $\CB_\af$, 
    \[
    \begin{tikzcd}
        F_\af: B_\af \times \BD \ar[rr, "\sim"] \ar[rd, "\pr_2"'] && \CB_\af \ar[ld, "\pi"]\\
        &\BD&
    \end{tikzcd}
    \] is a commutative diagram, 
    \[
        U_0 := \CU \cap X_0 \subset \bigcup_\af \lt(\frac{1}{2} B_\af\rt) 
        \quad\text{and}\quad 
        \CU \subset \bigcup_\af \lt(\frac{1}{2} \CB_\af\rt),
    \] 
    where $r \CB_\af = F_\af(rB_\af \times \BD)$ and $r B_\af = B^{X_0}(x_0, r \cdot r_\af)$.
\end{itemize}
We denote by $F_{\af,t}: B_\af \overset{\sim}{\longrightarrow} B_{\af,t} := \CB_\af \cap X_t$ the isomorphism induced by $F_\af$.
\end{nota}

\subsection{Hartogs type results}
In this section, we study Hartogs type results and the continuity of the supremum along the convergence in families.

\begin{prop}\label{prop:Hartogs_conv_subseq_family}
The following hold:
\begin{enumerate}[label={(\roman*)}]
    \item\label{item:hartogs_1} Let $(u_k)_k \in \PSH_{\fibre}(\CX, \om)$ be a sequence with $t_k \to 0$ as $k \to +\infty$. 
    If $(\sup_{X_k} u_k)_k$ is uniformly bounded, there is a subsequence converging to a function $u_0 \in \PSH(X_0, \om_0)$.
    \item\label{item:hartogs_2} If $(u_k)_k \in \PSH_{\fibre}(\CX, \om)$ converges to $u_0 \in \PSH(X_0, \om_0)$, then $(\sup_{X_k} u_k)_k$ is bounded and the following lower semi-continuity holds:
    \[
        \sup_{X_0} u_0 \leq \liminf_{k \to +\infty} \sup_{X_k} u_k.
    \]
\end{enumerate}
\end{prop}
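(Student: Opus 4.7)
The plan is to reduce to local analysis on a locally trivial finite cover $(B_\af, F_\af, \CB_\af)_\af$ of an open neighborhood of $X_0^\reg$, as in Notation~\ref{nota}. On each chart, the pull-back $\phi_{\af,k} := u_k \circ F_{\af,k}$ is $\om_{\af,k}$-psh on $B_\af$, where $\om_{\af,k} := F_{\af,k}^*\om_k$ converges smoothly to $\om_0$; adding a smooth local K\"ahler potential $h_\af \circ F_{\af,k}$ (converging smoothly in $k$) produces a genuinely plurisubharmonic function $\widetilde\phi_{\af,k}$ on $B_\af$.

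\smallskip

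For part~(i), I first derive a uniform $L^1(B_\af)$-bound on $\widetilde\phi_{\af,k}$: the assumption $|\sup_{X_k} u_k| \leq C$ together with the sup-$L^1$ comparison in Theorem~\ref{thm:SL_and_Skoda_in_family} and the trivial upper bound $\int_{X_k} u_k \om_k^n \leq V\sup_{X_k}u_k$ show that $\int_{X_k} u_k \om_k^n$ is uniformly bounded on both sides; using $u_k \leq C$ on the complement of $B_{\af,k}:=F_{\af,k}(B_\af)$, this transfers (via change of variables and the smooth convergence of $\om_k$) to a uniform $L^1(B_\af)$-bound on $\widetilde\phi_{\af,k}$. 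Classical compactness of plurisubharmonic functions then yields a subsequence converging in $L^1(B_\af)$ to a psh function --- divergence to $-\infty$ is excluded by the $L^1$-bound. A diagonal extraction over countably many charts covering $X_0^\reg$ produces a single subsequence $(u_{k_j})_j$ with $L^1$-limits $\phi_{\af,0}$ on every $B_\af$. On overlaps, the transitions $G_k := F_{\beta,k}^{-1}\circ F_{\af,k}$ tend smoothly to $\Id$ (as $F_{\af,0}=\Id$), and $\phi_{\af,k} = \phi_{\beta,k}\circ G_k$, so the local limits patch to a well-defined $\om_0$-psh function $u_0$ on $X_0^\reg$. The upper bound $u_0\leq C$ together with the normality of $X_0$ then provides a unique extension $u_0 \in \PSH(X_0, \om_0)$.

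\smallskip

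For part~(ii), boundedness of $c_k := \sup_{X_k} u_k$ follows from a normalization trick. Setting $v_k := u_k - c_k$ with $\sup_{X_k} v_k = 0$, the shift $v_k\circ F_{\af,k} + h_\af\circ F_{\af,k}$ is a psh function on $B_\af$ with supremum uniformly bounded above, hence uniformly bounded in $L^1(B_\af)$ by classical local integrability. Combined with $\int_{B_\af}\phi_{\af,k}\to \int_{B_\af}u_0$ (from the family convergence hypothesis), integrating the identity $\phi_{\af,k} = v_k\circ F_{\af,k} + c_k$ over $B_\af$ exhibits $c_k \cdot \Vol(B_\af)$ as the difference of two bounded sequences. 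For the lower semi-continuity $\sup_{X_0}u_0 \leq \liminf_k c_k$, a chart-level submean value inequality applied to $\widetilde\phi_{\af,0}$ on a small Euclidean ball $B\subset B_\af$ around any point $x_0 \in X_0^\reg$, combined with the $L^1(B)$-convergence of $\widetilde\phi_{\af,k}$ and the smooth convergence of the potentials, yields $c_k \geq u_0(x_0) - \epsilon$ for $k$ large; the desired inequality follows by taking the supremum over $x_0$ and invoking $\sup_{X_0^\reg}u_0 = \sup_{X_0}u_0$ (a consequence of $X_0^\sing$ being pluripolar and $u_0$ being upper semi-continuous).

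\smallskip

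The main obstacle is the uniform $L^1$-compactness in part~(i): a pointwise uniform sup bound alone is not enough, and the sup-$L^1$ comparison of Theorem~\ref{thm:SL_and_Skoda_in_family} is crucially used to exclude degeneration of $\widetilde\phi_{\af,k}$ to $-\infty$ on some chart. Once compactness is secured, the chart patching and the extension of $u_0$ across $X_0^\sing$ reduce to standard facts about psh functions on normal K\"ahler varieties.
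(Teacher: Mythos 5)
Your part~\ref{item:hartogs_1} and the lower semi-continuity half of part~\ref{item:hartogs_2} follow essentially the same route as the paper: uniform $L^1$ bounds on the fibres via Theorem~\ref{thm:SL_and_Skoda_in_family}, chartwise compactness of psh functions, a diagonal extraction, patching on overlaps, and extension of the limit across $X_0^\sing$ by normality; your submean-value argument for $\sup_{X_0}u_0\leq\liminf_k\sup_{X_k}u_k$ is equivalent to the paper's a.e.\ pointwise argument. The genuine gap is in your proof of the boundedness of $c_k=\sup_{X_k}u_k$: you assert that $v_k\circ F_{\af,k}+h_\af\circ F_{\af,k}$, being psh on $B_\af$ with supremum uniformly bounded above, is uniformly bounded in $L^1(B_\af)$ ``by classical local integrability''. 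This is false as stated: an upper bound on the supremum gives no lower $L^1$ bound (take $\phi_k\equiv -k$, or $\phi_k=k\log|z|$ on the unit ball). The normalization $\sup_{X_k}v_k=0$ is a statement on the whole fibre; the near-maximum points of $v_k$ may lie far from the chart $\CB_\af$ or near the singular locus, so nothing classical prevents $v_k\circ F_{\af,k}$ from drifting to $-\infty$ on $B_\af$ --- and ruling this out is exactly the content of the upper bound on $\sup_{X_k}u_k$ that you are trying to prove.

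The step can be repaired with a tool you already used in part~\ref{item:hartogs_1}: since $v_k\leq 0$, Theorem~\ref{thm:SL_and_Skoda_in_family} gives $\int_{B_{\af,k}}v_k\,\om_k^n\geq\int_{X_k}v_k\,\om_k^n\geq V\bigl(\sup_{X_k}v_k-C_{SL}\bigr)=-VC_{SL}$, and after the change of variables (with $F_{\af,k}^\ast\om_k^n$ uniformly comparable to $\om_0^n$) this yields the two-sided bound on $\int_{B_\af}v_k\circ F_{\af,k}$ that your identity $c_k\Vol(B_\af)=\int_{B_\af}\phi_{\af,k}-\int_{B_\af}v_k\circ F_{\af,k}$ requires. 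With that correction your route to the boundedness of the sups is valid and in fact more direct than the paper's, which instead argues by contradiction: it applies part~\ref{item:hartogs_1} to the sup-normalized sequence $u_k-\sup_{X_k}u_k$ and compares it with $u_k-\sup_{X_j}u_j$ for fixed $j$, letting $j\to+\infty$ to contradict that the limit is not identically $-\infty$.
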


\begin{proof}
We first prove \ref{item:hartogs_1}.
Without loss of generality, one may assume that $(u_k)_k$ are negative functions.
From Theorem~\ref{thm:SL_and_Skoda_in_family}, the sequence $\lt(\int_{X_k} u_k \om_k^n\rt)_k$ is bounded.
One can find countably many double covers $(U_{j,0})_{j \in \BN}$ and $(V_{j,0})_{j \in \BN}$ of $X_0^\reg$ with $V_{j,0} \Subset U_{j,0}$ for each $j \in \BN$ and families of biholomorphisms $(F_t^{U_{j,0}})_j$.
Since $\lt(\int_{X_k} u_k \om_k^n\rt)_k$ is bounded, for each $j \in \BN$, $\lt(\int_{U_{j,0}} (F_k^{U_{j,0}})^\ast u_k \dd V_{\text{eucl}}\rt)_k$ is also bounded.
Applying Hartogs lemma, there exists a subsequence of $(u_k)_k$ such that $((F_k^{U_{1,0}})^\ast u_k)_k$ converges to a function $u_0 \in \PSH(V_{1,0}, \om_0)$ along the subsequence.
Abusing notation, we still denote the above subsequence by $(u_k)_k$.
Take $V_{2,0}$ with $V_{2,0} \cap V_{1,0} \neq \emptyset$.
Next, by the same reason, the family $(F_k^{U_{2,0}})^\ast u_k$ is also relatively compact in $L^1_\loc(U_{2,j})$.
Then one can again extract a convergent subsequence, and the limit coincides with $u_0$ on $V_{1,0} \cap V_{2,0}$ by Remark~\ref{rmk:limit_well_defined}.
Iterating the argument and taking the diagonal subsequence, we obtain a convergent subsequence in the sense of Definition~\ref{defn:conv_in_family}.
The limit function $u_0 \in \PSH(X_0^\reg, \om_0)$ extends to a function $u_0 \in \PSH(X_0, \om_0)$ by normality of $X_0$ and Grauert--Remmert's theorem \cite{Grauert_Remmert_1956}.
In particular, $u_0$ is not identically $-\infty$.

\smallskip
We now deal with \ref{item:hartogs_2}.
On each open chart $U_0 \Subset X_0^\reg$ with a family of biholomorphisms $F_t$ to a chart $U_t$ of nearby fibre $X_t$ and $F_0 = \Id_{U_0}$, the sequence $(F_k^\ast u_k)_k$ converges to $u_0$ in $L^1(U_0)$. 
Therefore, for a.e. $x \in U_0$, 
\[
    u_0(x) = \liminf_{k \to +\infty} F_k^\ast u_k(x) 
    \leq \liminf_{k \to +\infty} \sup_{X_k} u_k.
\]
It implies that ${u_0}_{|X_0^\reg} \leq \liminf_{k \to +\infty} \sup_{X_k} u_k$ by plurisubharmonicity and thus 
\[
    \sup_{X_0} u_0 = \sup_{z \in X_0} \lt(\limsup_{X_0^\reg \ni x \to z} u_0(x)\rt) 
    \leq \liminf_{k \to +\infty} \sup_{X_k} u_k.
\]

\smallskip
To control $(\sup_{X_k} u_k)_k$ from above, we argue by contradiction. 
Assume that there is a sequence $(u_k)_k$ converging in families and $(\sup_{X_k} u_k)_k$ is not bounded from above.
After extracting and relabeling, one can assume that $(\sup_{X_k} u_k)_k$ increases to $+\infty$. 
Then for each $j$ fixed, we have the sequence $(u_k - \sup_{X_j} u_j)_k$ converges to $u_0 - \sup_{X_j} u_j$ in families.  
On the other hand, by Proposition~\ref{prop:Hartogs_conv_subseq_family}, the sequence $(u_k - \sup_{X_k} u_k)_k$ contains a convergent subsequence to $u \in \PSH(X_0, \om_0)$ which is not identically $-\infty$. 
Since $u_k - \sup_{X_j} u_j \geq u_k - \sup_{X_k} u_k$ for all $k \geq j$, we have $u_0 - \sup_{X_j} u_j \geq u$ for all $j \in \BD$.  
Taking $j \to +\infty$, this yields a contradiction.
\end{proof}

\begin{rmk}
If $\pi: \CX \to \BD$ is trivial (i.e. there is a biholomorphism $f: \CX \overset{\sim}{\longrightarrow} X_0 \times \BD$ satisfying $\pi = f \circ \pr_2$) and $u_k \in \PSH(X_k, \om_k)$ is a sequence converging to $u_0 \in \PSH(X_0, \om_0)$, the convergence defined in Definition~\ref{defn:conv_in_family} is the usual $L^1$-convergence on $X_0$.

\smallskip
Indeed, for all $\vep > 0$, one can find an open subset $\CZ_{0,\vep} \Subset \CW_{0,\vep} \subset X_0$ containing $X_0^\sing$ and satisfying $\Vol_{\om_0}(\CW_{0,\vep}) < \vep/2$. 
We choose finite open charts $(U_{j,0})_j$ of $X_0^\reg$ which covers $\CX \setminus \CZ_\vep$. 
By triviality of the family, one can identify these data to any $X_t$. 
Therefore, up to shrinking $\BD$, we obtain an open set $\CW_{t,\vep}$ near $X_t^\sing$ such that $\Vol_{\om_t}(\CW_{t,\vep}) < \vep$ and a locally trivial finite cover $(B_\af, F_\af, \CB_\af)_\af$ of $\CX \setminus \overline{\CZ_{\vep}}$.
Since $(u_k)_k$ is a sequence which converges to $u_0$ in the family sense, $u_k \circ F_{\af,k}$ converges to $u_0$ in $L^1(B_\af)$.
By Proposition~\ref{prop:Hartogs_conv_subseq_family}~\ref{item:hartogs_2}, we may assume that $u_k$ are negative and $-D$ is a lower bound of $(\sup_{X_k} u_k)_k$ and $\sup_{X_0} u_0$.
For every $t$ close to zero, one can find a uniform constant $C>0$ such that $\om_0$ and $\om_{t}$ are quasi-isometric (i.e. $C^{-1} \om_0 \leq \om_t \leq C \om_0$).

\smallskip
Near singularities, applying H\"older's inequality and Theorem~\ref{thm:SL_and_Skoda_in_family}, we have
{\footnotesize
\begin{align*}
    \int_{\CW_{0,\vep}} \abs{u_k \circ F_k - u_0} \om_0^n 
    &\leq \int_{\CW_{t_k,\vep}} |u_k| \om_0^n + \int_{\CW_{0\vep}} |u_0| \om_0^n\\
    &\leq C\Vol_{\om_k}(\CW_{t_k,\vep})^{1/2} \lt(\int_{\CW_{t_k,\vep}} |u_k|^2 \om_k^n\rt)^{1/2} + \Vol_{\om_0}(\CW_{0,\vep})^{1/2}\lt(\int_{\CW_{0, \vep}} |u_0|^2 \om_0^n\rt)^{1/2}\\
    &\leq (C+1) \vep^{1/2} \frac{\af}{\sqrt{2}} e^{\af D/2} \lt[ \lt( \int_{X_k} e^{-\af \lt( u_k - \sup_{X_k} u_k \rt)} \om_k^n \rt)^{1/2} + \lt(\int_{X_0} e^{-\af\lt(u_0 - \sup_{X_0} u_0\rt)} \om_0^n\rt)^{1/2} \rt]\\
    &\leq (C+1) \af e^{\af D/2} C_\af^{1/2} \vep^{1/2}.
\end{align*}}%
Thus, in trivial families, we show that if $(u_k)_k$ converges to $u_0$ in the family sense, then the convergence is the usual $L^1$-convergence. 
The reverse implication is obvious.
\end{rmk}

Therefore, the convergence in families is a natural extension of the usual $L^1$-convergence. 
Next, we prove that along a sequence $(u_k)_k$ converging in families, the associated currents $(\ddc u_k \w [X_k])_k$ also converges to $\ddc u_0 \w [X_0]$ in the sense of currents on the total space $\CX$. 

\begin{prop}\label{prop:hartogs_currents}
The following hold:
\begin{enumerate}[label={(\roman*)}]
    \item\label{item:hartogs_currents_1} 
    If $(u_k)_k \in \PSH_{\fibre}(\CX, \om)$ converges to $u_0 \in \PSH(X_0 , \om_0)$, then $\ddc u_k \w [X_k]$ converges to $\ddc u_0 \w [X_0]$ in the sense of currents on $\CX$.
    \item\label{item:hartogs_currents_2}
    Let $(u_k)_k \in \PSH_{\fibre}(\CX,\om)$ with $t_k \to 0$ as $k \to +\infty$ and $\sup_{X_k} u_k = 0$. 
    Assume that there exists $u_0 \in \PSH(X_0,\om_0)$ with $\sup_{X_0} u_0 = 0$ and $(\ddc u_k \w [X_k])_k$ converges to $\ddc u_0 \w [X_0]$ in the sense of currents. 
    Then up to extracting a subsequence, $(u_k)_k$ converges to $u_0 + c$ for some constant $c < 0$. 
\end{enumerate}
\end{prop}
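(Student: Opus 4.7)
Test against a smooth $(n-1,n-1)$-form $\eta$ on $\CX$. Integration by parts on the compact $X_k$ gives
\[
    \langle \ddc u_k\w[X_k],\eta\rangle = \int_{X_k} u_k\,\ddc\eta|_{X_k},
\]
so the claim reduces to the convergence $\int_{X_k} u_k\,\ddc\eta|_{X_k} \to \int_{X_0} u_0\,\ddc\eta|_{X_0}$. After normalization by a constant (which does not affect $\ddc u_k\w[X_k]$) and invoking Proposition~\ref{prop:Hartogs_conv_subseq_family}, I may assume $u_k \leq 0$ uniformly. Let $\CZ$ denote the singular locus of $\pi$ and introduce a smooth cut-off $\chi_\vep : \CX \to [0,1]$ vanishing on an $\vep$-neighborhood $W_\vep$ of $\CZ$ and equal to $1$ outside $W_{2\vep}$; I split $\ddc\eta = \chi_\vep\ddc\eta + (1-\chi_\vep)\ddc\eta$ and treat each piece separately.

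For the first piece, I cover $\CX\setminus W_\vep$ (up to shrinking $\BD$) by finitely many charts of a locally trivial system $(B_\af,F_\af,\CB_\af)_\af$ as in Notation~\ref{nota}, with a subordinate partition of unity $\{\rho_\af\}$. Each local contribution
\[
    \int_{B_\af} (u_k\circ F_{\af,k})\, F_{\af,k}^*\big(\rho_\af \chi_\vep\, \ddc\eta|_{B_{\af,k}}\big)
\]
passes to the limit thanks to the $\CC^\infty$-convergence $F_{\af,k}\to\Id_{B_\af}$ (so that $F_{\af,k}^*(\cdot)$ converges uniformly) combined with the $L^1$-convergence $u_k\circ F_{\af,k}\to u_0$ on $B_\af$. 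For the second piece, since $\ddc\eta$ is $L^\infty$-bounded on $\CX$, H\"older's inequality and Theorem~\ref{thm:SL_and_Skoda_in_family} give
\[
    \Big|\int_{X_k} u_k(1-\chi_\vep)\ddc\eta|_{X_k}\Big| \leq C\|u_k\|_{L^p(X_k,\om_k^n)} \, \Vol_{\om_k}(W_{2\vep}\cap X_k)^{1/q},
\]
with $\|u_k\|_{L^p}$ uniformly bounded. Dominating $\mathds{1}_{W_{2\vep}}$ by a smooth bump and using the continuity of $t\mapsto[X_t]$ on smooth forms shows $\Vol_{\om_k}(W_{2\vep}\cap X_k)\to\Vol_{\om_0}(W_{2\vep}\cap X_0)$, and the latter tends to zero as $\vep\to 0$ because $\CZ\cap X_0$ is a proper analytic subset of the normal variety $X_0$. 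An $\vep$-$\delta$ argument concludes (i).

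\textbf{Plan for (ii).} With $\sup_{X_k}u_k=0$, Proposition~\ref{prop:Hartogs_conv_subseq_family}~(i) extracts a subsequence converging in families to some $v\in\PSH(X_0,\om_0)$, and (ii) of the same proposition gives $\sup_{X_0}v\leq 0$. Part (i) above then yields $\ddc u_k\w[X_k]\to\ddc v\w[X_0]$; combined with the hypothesis this forces $\ddc(v-u_0)=0$ as currents on $X_0$. Pulling back to a resolution $p:Y\to X_0$, the difference $(v-u_0)\circ p$ is an $L^1_\loc$ distribution on a compact K\"ahler manifold with vanishing $\ddc$, hence pluriharmonic and therefore constant; thus $v=u_0+c$ on $X_0$, and the normalization forces $c\leq 0$.

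\textbf{Main obstacle.} The delicate step is the bad-part estimate in (i), requiring uniform control of the volume of shrinking tubular neighborhoods of $\CZ$ across the varying fibres, reconciled with the Skoda--Zeriahi $L^p$-bounds. The descent in (ii) also leans on the (standard but nontrivial) fact that $\ddc$-closed $L^1_\loc$ functions on compact normal K\"ahler varieties are constants, handled via resolution of singularities.
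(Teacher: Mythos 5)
Your proposal is correct and follows essentially the same route as the paper: after integration by parts you split the test form with a cutoff near the singular locus of $\pi$, handling the bad part by H\"older together with the uniform Skoda bounds of Theorem~\ref{thm:SL_and_Skoda_in_family} and small fibrewise volume of a neighbourhood of $\CZ$, and the good part via a locally trivial finite cover and the $L^1$-convergence in families; for (ii) you likewise combine Proposition~\ref{prop:Hartogs_conv_subseq_family}, part (i), and the fact that a pluriharmonic difference of quasi-psh functions on the compact normal $X_0$ is constant. Your conclusion $c\leq 0$ is in fact what the argument (and the paper's own proof, which gets $c=\sup_{X_0}v_0\leq 0$) yields.
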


\begin{proof}
We first deal with part \ref{item:hartogs_currents_1}.
Without loss of generality, we may assume that $(u_k)_k$ and $u_0$ are negative.
Let $D \geq 0$ be a constant such that $\sup_{X_k} u_k \geq -D$ for all $k \in \BN$ and $\sup_{X_{0}} u_{0} \geq -D$.
By the Taylor expansion of $x \mapsto e^x$ and Theorem~\ref{thm:SL_and_Skoda_in_family}, we have
\begin{equation}\label{eq:bdd_Lp}
    \int_{X_k} \abs{u_k}^2 \om_k^n 
    \leq \frac{2}{\af^2} e^{-\af \sup_{X_k} u_k} \int_{X_k} e^{-\af \lt(u_k - \sup_{X_k} u_k\rt)} \om_k^n 
    \leq \frac{2}{\af^2} e^{\af D} C_\af=:A.
\end{equation}
Similarly $\int_{X_0}\abs{u_0}^2\om_0^n\leq A$.

For $\vep > 0$, we fix open sets $\CW_{\vep/2} \Subset \CW_\vep$ in $\CX$ both containing $X_0^\sing$ such that 
\begin{equation}\label{eq:vol_small_open_near_sing}
    \Vol_{\om_{t}}(X_t \cap \CW_\vep) < \vep
\end{equation}
for all $t$ sufficiently close to $0$.
We set 
\[
    \CU_\vep := \CX \setminus \overline{\CW_{\vep/2}}.
\]
Up to shrinking $\BD$, we take $(B_\bt, F_\bt, \CB_\bt)_{\bt \in J}$ a locally trivial finite cover of $\CU_\vep$. 
One can find $(\chi_\bt)_{\bt \in J} \cup \{\chi_\CW\}$, a partition of unity associated to $\bigcup_{\bt}\CB_\bt \cup \CW_\vep$ over a small disk, i.e.:
\begin{itemize}
    \item $\chi_\CW$ is supported in $\CW_\vep$ and for each $\bt$, $\chi_\bt$ is supported in $\CB_\bt$;
    \item $\chi_\CW + \sum_{\bt \in J} \chi_\bt \equiv 1$ on $\pi^{-1}(\bBD_r)$ for some $r>0$ sufficiently small.
\end{itemize} 
Fix a test $(n-1,n-1)$-form $\eta$, and write $\eta_t:=\eta_{|X_t}$, $\chi_{\bt,t} := {\chi_\bt}_{|X_t}$ and $\chi_{\CW,t} := {\chi_\CW}_{|X_t}$. Then we have the following estimate: 
\begin{align*}
    &\abs{\iprod{\eta}{\ddc u_k \w [X_k]} - \iprod{\eta}{\ddc u_0 \w [X_0]}}  
    = \abs{\int_{\CX} \eta \w \lt(\ddc u_k \w [X_k] - \ddc u_0 \w [X_0]\rt)}\\
    &= \abs{\int_{\CX} \ddc \eta \w \lt(u_k [X_k] - u_0 [X_0]\rt)}\\
    &\leq \abs{\sum_{\bt \in J} \int_{\CB_\bt} \chi_\bt \ddc \eta \w \lt(u_k [X_k] - u_0 [X_0]\rt)}
    + \abs{\int_{\CW_\vep} \chi_\CW \ddc \eta \w \lt(u_k [X_k] - u_0 [X_0]\rt)}\\
    &\leq \underbrace{\abs{\sum_{\bt \in J} \lt(\int_{B_{\bt,k}} u_k \chi_{\bt,k} \ddc_k \eta_k - \int_{B_{\bt}} u_0 \chi_{\bt,0} \ddc_0 \eta_0 \rt) }}_{=: \RN{1}}
    + C_\eta \underbrace{\lt(\int_{X_k \cap \CW_\vep} |u_k| \om_k^n + \int_{X_0 \cap \CW_\vep} |u_0| \om_0^n\rt)}_{=: \RN{2}}.
\end{align*}
We first control the first term $\RN{1}$.
It follows directly from the construction that as $k \to +\infty$,
\begin{equation}\label{eq:local_test_conv}
    \frac{F_{\bt,k}^\ast(\chi_{\bt,k} \ddc_k \eta_k)}{\om_0^n} 
    \to \frac{\chi_{\bt,0} \ddc_0 \eta_0}{\om_0^n}
\end{equation}
uniformly on $B_\bt$ for any $\bt\in J$. Since $\abs{\ddc_0 \eta_{0}}\leq C_\eta \om_0^n$ for a positive constant $C_\eta>0$, the uniform convergence \eqref{eq:local_test_conv} yields
%{\footnotesize
\begin{align*}
    \limsup_{k \to +\infty} \RN{1} 
    &\leq \limsup_{k \to +\infty} \sum_{\bt \in J} \int_{B_\bt} \abs{(u_k \circ F_{\bt,k}) \frac{F^\ast_{\bt,k} (\chi_{\bt,k} \ddc_k \eta_k)}{\om_0^n} - u_0 \frac{\chi_{\bt,0} \ddc_0 \eta_0}{\om_0^n}} \om_0^n\\ 
    &\leq \limsup_{k \to +\infty} \sum_{\bt \in J} \int_{B_\bt} \abs{u_k \circ F_{\bt,k} - u_0} \abs{\frac{F^\ast_{\bt,k} (\chi_{\bt,k} \ddc_k \eta_k)}{\om_0^n}} \om_0^n \\
    &\qquad+ \limsup_{k \to +\infty} \sum_{\bt \in J} \int_{B_\bt} \abs{u_0} \abs{\frac{F^\ast_{\bt,k}(\chi_{\bt,k} \ddc_k \eta_k)}{\om_0^n} - \frac{\chi_{\bt,0} \ddc_0 \eta_0}{\om_0^n}} \om_0^n\\
    &\leq 2C_\eta \limsup_{k \to +\infty} \sum_{\bt \in J} \int_{B_\bt} \abs{u_k \circ F_{\bt,k} - u_0} \om_0^n = 0,
\end{align*}%}%
where the last convergence follows from the definition of convergence in families of $u_k$ to $u_0$.

\smallskip
By H\"older's inequality, \eqref{eq:bdd_Lp} and \eqref{eq:vol_small_open_near_sing}, one obtains a control on the second part $\RN{2}$:
{\small
\begin{equation*}
    \RN{2} \leq \lt(\int_{X_k \cap \CW_\vep} |u_k|^2 \om_k^n\rt)^{1/2} \Vol_{\om_k}^{1/2}(X_k \cap \CW_\vep) + \lt(\int_{X_{0} \cap \CW_\vep} |u_{0}|^2 \om_{0}^n\rt)^{1/2} \Vol_{\om_{0}}^{1/2}(X_{0} \cap \CW_\vep) \leq 2 A \vep^{1/2}.
\end{equation*}}
All in all, we obtain that
\[
    \limsup_{k \to +\infty} \abs{\iprod{\eta}{\ddc u_k \w [X_k]} - \iprod{\eta}{\ddc u_0 \w [X_0]}}
    \leq 2A\vep^{1/2}.
\]
Letting $\vep\to 0$ we deduce that $\ddc u_k \w [X_k]$ converges to $\ddc u_0 \w [X_0]$ in the sense of currents.

\smallskip

We now show part~\ref{item:hartogs_currents_2}.
By Proposition~\ref{prop:Hartogs_conv_subseq_family}, one can extract a subsequence converging in families towards $v_0 \in \PSH(X_0, \om_0)$ and $\sup_{X_0} v_0 \leq 0$.
We have $\ddc u_0 \w [X_0] = \ddc v_0 \w [X_0]$. 
Then $u_0 - v_0$ is a pluriharmonic distribution on $X_0$; hence $u_0 - v_0$ is constant, and we have $u_0 = v_0 - \sup_{X_0} v_0$ by the normalization of $u_0$.
\end{proof}

In Proposition~\ref{prop:Hartogs_conv_subseq_family}~\ref{item:hartogs_2}, if $(u_k)_k$ converges to $u_0$ in the family sense, we obtain an upper bound of $\sup_{X_0} u_0$ in terms of $(\sup_{X_k} u_k)_k$.  
One may wonder whether there is a good formulation for the lower bound of $\sup_{X_0} u_0$ in terms of the $(\sup_{X_k} u_k)_k$. 
We next prove the convergence of $(\int_{X_k} u_k \om_k^n)_k$ towards $\int_{X_0} u_0 \om_0^n$ and derive a lower bound for $\sup_{X_0} u_0$.

\begin{lem}\label{lem:L1_integral_converge}
If $(u_k)_k \in \PSH_{\fibre}(\CX, \om)$ converges to $u_0 \in \PSH(X_0 , \om_0)$, then 
\[
    \int_{X_k} u_k \om_k^n 
    \xrightarrow[k \to +\infty]{} \int_{X_0} u_0 \om_0^n.
\]
Moreover, one has
\[
    \sup_{X_0} u_0 \geq \limsup_{k \to +\infty} \lt(\sup_{X_k} u_k\rt) - C_{SL},
\]
where $C_{SL}$ is a uniform constant in the sup-$L^1$ comparison in families (see Theorem~\ref{thm:SL_and_Skoda_in_family}). 
\end{lem}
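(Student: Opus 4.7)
The plan is to run the decomposition-and-Skoda argument from the proof of Proposition~\ref{prop:hartogs_currents}~\ref{item:hartogs_currents_1}, with $\om_k^n$ playing the role of $\ddc \eta \w [X_k]$. First I would normalise: by Proposition~\ref{prop:Hartogs_conv_subseq_family}~\ref{item:hartogs_2} the sequence $(\sup_{X_k} u_k)_k$ is bounded from above, so after subtracting one fixed constant I may assume $u_k \leq 0$ on $X_k$ and $u_0 \leq 0$ on $X_0$, with a common lower bound $\sup_{X_k} u_k,\sup_{X_0} u_0 \geq -D$; both sides of the claimed identity are affine in this shift, so nothing is lost. Theorem~\ref{thm:SL_and_Skoda_in_family} then furnishes a uniform $L^p(\om_k^n)$-bound on $|u_k|$ for every $p\geq 1$, and likewise for $|u_0|$ in $L^p(\om_0^n)$. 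Given $\vep>0$, I choose nested neighborhoods $\CW_{\vep/2}\Subset \CW_\vep$ of the singular locus $\CZ$ of $\pi$ with $\Vol_{\om_t}(X_t\cap \CW_\vep)<\vep$ for $t$ close to $0$, a locally trivial finite cover $(B_\af, F_\af, \CB_\af)_{\af\in J}$ of $\CU_\vep := \CX\setminus \overline{\CW_{\vep/2}}$ and a subordinate partition of unity $\chi_\CW + \sum_{\af\in J}\chi_\af \equiv 1$ on $\pi^{-1}(\bBD_r)$.

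With these ingredients in place I split
\[
    \int_{X_t} u_t\,\om_t^n = \sum_{\af\in J}\int_{B_{\af,t}} u_t\,\chi_{\af,t}\,\om_t^n + \int_{X_t\cap \CW_\vep} u_t\,\chi_{\CW,t}\,\om_t^n
\]
for $t\in\{t_k,0\}$. H\"older combined with the uniform $L^2$ bound controls each singular piece by a universal multiple of $\vep^{1/2}$. Each smooth piece is handled by pulling back through the trivialisation,
\[
    \int_{B_{\af,k}} u_k\,\chi_{\af,k}\,\om_k^n = \int_{B_\af} (u_k\circ F_{\af,k})\,(F^\ast_{\af,k}\chi_{\af,k})\,(F^\ast_{\af,k}\om_k^n),
\]
and invoking that, by the very definition of the convergence in families, $u_k\circ F_{\af,k}\to u_0$ in $L^1(B_\af)$ while $F^\ast_{\af,k}\chi_{\af,k}\to \chi_{\af,0}$ and $F^\ast_{\af,k}\om_k^n\to \om_0^n$ smoothly. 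A H\"older estimate for the difference of the two integrands, using the uniform $L^2(\om_0^n)$ bound on $u_0$, shows that each smooth piece converges to $\int_{B_\af} u_0\,\chi_{\af,0}\,\om_0^n$; letting $k\to +\infty$ and then $\vep\to 0$ yields $\int_{X_k} u_k\,\om_k^n\to \int_{X_0} u_0\,\om_0^n$.

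The inequality for $\sup_{X_0} u_0$ is then immediate from the sup-$L^1$ comparison in families: by Theorem~\ref{thm:SL_and_Skoda_in_family} one has $\sup_{X_k} u_k - C_{SL}\leq V^{-1}\int_{X_k} u_k\,\om_k^n$ for every $k$; taking $\limsup$ and using the integral convergence just established together with $V^{-1}\int_{X_0} u_0\,\om_0^n\leq \sup_{X_0} u_0$ gives the desired bound. The only delicate point will be controlling the contribution near the singular locus uniformly in $k$; this is precisely where the Skoda--Zeriahi estimate of Theorem~\ref{thm:SL_and_Skoda_in_family} is indispensable. Once the singular part is dispatched, the smooth part reduces to ordinary dominated convergence with smoothly convergent densities.
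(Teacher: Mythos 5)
Your argument is correct and follows essentially the same route as the paper: the same decomposition into a singular piece (controlled via Lemma~\ref{lem:small_cap_near_sing}-type small-volume neighborhoods, H\"older, and the uniform Skoda bound of Theorem~\ref{thm:SL_and_Skoda_in_family}) and locally trivialized smooth pieces (handled by the $L^1$-convergence of $u_k\circ F_{\af,k}$ against smoothly convergent densities), exactly as in the proof of Proposition~\ref{prop:hartogs_currents}, followed by the same sup-$L^1$ comparison to get the bound on $\sup_{X_0}u_0$. No gaps.
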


\begin{proof}
The proof follows the same idea as in Proposition~\ref{prop:hartogs_currents}.
We use the same background data as in the proof of Proposition~\ref{prop:hartogs_currents}. 
By H\"older's inequality, we get
\begin{align*}
    &\abs{\int_{X_k} u_k \om_k^n - \int_{X_0} u_0 \om_0^n}\\
    &\leq \abs{\sum_{\af \in J} \lt(\int_{B_{\af,k}} \chi_{\af,k} u_k \om_k^n - \int_{B_\af} \chi_{\af,0} u_0 \om_0^n\rt)} 
    + \abs{\int_{X_k \cap \CW_\vep} \chi_{\CW,k} u_k \om_k^n} 
    + \abs{\int_{X_0 \cap \CW_\vep} \chi_{\CW,0} u_0 \om_0^n}\\
    &\leq \sum_{\af \in J} \int_{B_\af} \abs{\frac{F_{\af,k}^\ast (\chi_{\af,k} u_k \om_k^n)}{\om_0^n} - \chi_{\af,0} u_0} \om_0^n 
    + 2 \lt(\frac{2}{\af^2} e^{\af D} C_\af\rt) \vep^{1/2}.
\end{align*}
Since $(F_{\af,k}^\ast u_k)_k$ converges to $u_0$ in $L^1(B_\af)$ and $\lt(\frac{F_{\af,k}^\ast(\chi_{\af,k} \om_k^n)}{\om_0^n}\rt)_k$ converges smoothly to $\chi_{\af,0}$ on $B_\af$, we have 
\[
    \int_{B_\af} \abs{\frac{F_{\af,k}^\ast (\chi_{\af,k} u_k \om_k^n)}{\om_0^n} - \chi_{\af,0} u_0} \om_0^n 
    \xrightarrow[k \to +\infty]{} 0;
\]
hence, $\lim_{k \to +\infty} \int_{X_k} u_k \om_k^n = \int_{X_0} u_0 \om_0^n$.

\smallskip
By Theorem \ref{thm:SL_and_Skoda_in_family}, there is a uniform constant $C_{SL} > 0$ such that
\[
    \frac{1}{V} \int_{X_k} u_k \om_k^n \geq \sup_{X_k} u_k - C_{SL} \quad\text{for all $k$}.
\] 
Letting $k \to + \infty$, one can conclude that
\[
    \limsup_{k \to + \infty} \lt(\sup_{X_k} u_k\rt) - C_{SL}
    \leq \lim_{k \to +\infty} \frac{1}{V} \int_{X_k} u_k \om_k^n 
    = \frac{1}{V} \int_{X_0} u_0 \om_0^n 
    \leq \sup_{X_0} u_0.
\]
\end{proof}

We conjecture that the supremum is actually continuous along a sequence converging in families. 
Using the maximum principle, we show this is the case when $X_0$ has 
isolated singularities:

\begin{prop}\label{prop:usc_sup_conv}
Suppose that $(u_k)_k \in \PSH_{\fibre}(\CX, \om)$ is a sequence converging to $u_0 \in \PSH(X_0, \om_0)$. 
If the central fibre $X_0$ has isolated singularities, then 
\[
    \sup_{X_0} u_0 = \limsup_{k \to +\infty} \sup_{X_k} u_k.
\]
\end{prop}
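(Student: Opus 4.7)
By Proposition~\ref{prop:Hartogs_conv_subseq_family}~\ref{item:hartogs_2} we already know $\sup_{X_0} u_0 \leq \liminf_{k \to +\infty} \sup_{X_k} u_k$, so it suffices to establish the reverse inequality $\limsup_{k} \sup_{X_k} u_k \leq \sup_{X_0} u_0$. I would argue by contradiction, supposing $M := \limsup_{k} \sup_{X_k} u_k > \sup_{X_0} u_0$, and after extracting a subsequence assume $\sup_{X_k} u_k \to M$. Pick $x_k \in X_k$ with $u_k(x_k) = \sup_{X_k} u_k$. Since $\pi$ is proper, $\pi^{-1}(\overline{\BD_{1/2}})$ is compact, so up to a further subsequence $x_k \to x_\infty \in X_0$. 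If $x_\infty$ were regular, a trivialising chart $F : U_0 \times \BD \to \CU$ around $x_\infty$ with $F_0 = \Id_{U_0}$ would reduce matters to Lemma~\ref{lem:L1_conv_limsup}: applied to the psh sequence $u_k \circ F_k + \varphi_k \circ F_k$ on $U_0$ (where $\varphi$ is a smooth local potential of $\om$ and $\varphi_k \circ F_k \to \varphi_0$ smoothly) at the points $F_k^{-1}(x_k) \to x_\infty$, it gives $u_0(x_\infty) \geq \limsup_k u_k(x_k) = M > \sup_{X_0} u_0$, a contradiction. Hence $x_\infty \in X_0^\sing$.

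The new ingredient, where the isolated singularities hypothesis enters, is a localisation around $x_\infty$. Using a local embedding $j$ of a neighbourhood of $x_\infty$ in $\CX$ into some $\BC^N$, let $\CV$ be the preimage of a small ball around $j(x_\infty)$ chosen so that $\overline{\CV} \cap X_0^\sing = \{x_\infty\}$ (possible since $X_0^\sing$ is finite) and $\om = \ddc \varphi$ on $\CV$ for a smooth local potential $\varphi$. Then $\partial \CV \cap X_0 \subset X_0^\reg$, and for $k$ large $x_k \in \CV$. Let $Y_k$ be the connected component of $X_k \cap \CV$ containing $x_k$; since complex analytic spaces are locally connected, $Y_k$ is clopen in $X_k \cap \CV$, so its topological boundary in $X_k$ satisfies $\partial Y_k \subset \partial \CV \cap X_k$. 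Applying the maximum principle to the psh function $u_k + \varphi$ on the connected analytic set $Y_k$ gives
\[
    u_k(x_k) + \varphi(x_k) \leq \sup_{\overline{Y_k}}(u_k + \varphi) = \sup_{\partial Y_k}(u_k + \varphi) \leq \sup_{\partial \CV \cap X_k} u_k + \sup_{\CV} \varphi,
\]
which rearranges to $u_k(x_k) \leq \sup_{\partial \CV \cap X_k} u_k + \osc_{\CV}(\varphi)$.

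To bound the boundary sup, one runs a uniform Hartogs-type argument: the compact set $\partial \CV \cap X_0$ lies in $X_0^\reg$ and therefore admits a locally trivial finite cover $(B_\alpha, F_\alpha, \CB_\alpha)_\alpha$ as in Notation~\ref{nota}. Since $F_{\alpha,t}$ depends smoothly on $t$ with $F_{\alpha,0} = \Id$, for $k$ large $\partial \CV \cap X_k \subset \bigcup_\alpha F_{\alpha,k}(K_\alpha)$ for suitable compact neighbourhoods $K_\alpha \Subset B_\alpha$ of $\partial \CV \cap B_\alpha$. Classical Hartogs' lemma on each $B_\alpha$ (applied to the pulled-back psh modifications) then yields $\limsup_k \sup_{K_\alpha}(u_k \circ F_{\alpha,k}) \leq \sup_{K_\alpha} u_0 \leq \sup_{X_0} u_0$, and hence $\limsup_k \sup_{\partial \CV \cap X_k} u_k \leq \sup_{X_0} u_0$. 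Combined with the maximum principle bound, $M \leq \sup_{X_0} u_0 + \osc_{\CV}(\varphi)$; shrinking $\CV$ around $x_\infty$ forces $\osc_{\CV}(\varphi) \to 0$ by continuity of $\varphi$, yielding $M \leq \sup_{X_0} u_0$ and the desired contradiction.

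The main obstacle I anticipate is the rigorous application of the maximum principle on the possibly singular slice $Y_k$: one must verify that $Y_k$ is a relatively compact, connected analytic subset of $X_k$ and that its topological boundary in $X_k$ lies entirely on $\partial \CV \cap X_k$ (this uses the clopenness of connected components in locally connected spaces), before invoking the strong maximum principle for psh functions on irreducible complex analytic spaces. A secondary technical point is the uniform control of how $\partial \CV \cap X_k$ sits inside the regular charts of $X_0^\reg$ as $k \to 0$, which relies on the smooth dependence of the trivialising isomorphisms $F_{\alpha,t}$ on the base parameter.
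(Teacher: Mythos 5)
Your proof is correct and follows essentially the same route as the paper: localize at the limit of the maximum points, use the maximum principle on the slice of a small ball around the isolated singular point to push the supremum to the boundary sphere (which lies in $X_0^\reg$), control the boundary values by a Hartogs-type argument in trivializing charts, and shrink the ball so the oscillation of the local potential of $\om$ vanishes. The only cosmetic differences are that the paper extracts boundary maximum points $y_k\to y_0$ and applies Lemma~\ref{lem:L1_conv_limsup} in a single chart at $y_0$, whereas you cover $\pl\CV\cap X_0$ by finitely many charts and apply the classical Hartogs lemma uniformly, and you phrase the argument as a contradiction.
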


\begin{proof}
When $\pi: \CX \to \BD$ is smooth, upper semi-continuity follows directly by Hartogs lemma.
Now, we assume that $X_0$ has isolated singularities.
Let $\CZ \subset \CX$ be the singular locus of $\pi$.
Since $t \to \dim X_t \cap \CZ$ is upper semi-continuous, the nearby fibres have only isolated singularities as well.
Let $x_k \in X_k$ be a point where $u_k$ achieves its maximum.
After extracting a subsequence, we assume that $(x_k)_k$ converges to a point $x_0 \in X_0$.
If $x_0 \in X_0^\reg$, the proof is the same as in the smooth case. 
Otherwise, we embed $\CX \xhookrightarrow[\loc.]{} \BC^N$ near $x_0 \in X_0^\sing$ and $x_0$ coincides with the origin under this local embedding.
Let $\BB_r \subset \BC^N$ be the Euclidean ball of radius $r>0$ centered at the origin and let $g$ be a smooth local potential of $\om$. 
Denote by $g_t$ the restriction of $g$ on $X_t$.
By the maximum principle, we have
\[
    \sup_{\pl\BB_r \cap X_k} \lt(g_k + u_k\rt)
    = \sup_{\BB_r \cap X_k} \lt(g_k + u_k\rt)
    \geq \lt(g_k + u_k\rt)(x_k)
    = g_k(x_k) + \sup_{X_k} u_k.
\]
Let $(y_k)_k$ be a sequence of points such that $y_k \in \pl\BB_r \cap X_k$ and 
\[
    (g_k + u_k) (y_k) = \sup_{\pl\BB_r \cap X_k} (g_k + u_k).
\]
One can extract a subsequence such that $(y_k)_k$ converges to $y_0 \in \pl\BB_r \cap X_0$.
By assumption, $y_0$ belongs to $\pl\BB_r \cap X_0 \subset X_0^\reg$; hence one can find an open chart $U_0 \Subset X_0^\reg$ which contains $y_0$ and a family of biholomorphisms $F_t: U_0 \to U_t \Subset X_t^\reg$ for all $t \in \BD$ close to $0$ and $F_0 = \Id_{U_0}$.
Using Lemma~\ref{lem:L1_conv_limsup}, we have
\begin{align*}
    \limsup_{k \to +\infty} (g_k + u_k) (y_k)
    &= \limsup_{k \to +\infty} \lt((F_k)^\ast (g_k + u_k)\rt) \lt((F_k)^{-1}(y_k)\rt)\\
    &\leq (g_0 + u_0)(y_0) 
    \leq g_0(y_0) + \sup_{X_0} u_0.
\end{align*}
Therefore, we obtain
\begin{align*}
    g_0(x_0) + \limsup_{k \to +\infty} \sup_{X_k} u_k
    &= \limsup_{k \to +\infty} \lt(g_k(x_k) + \sup_{X_k} u_k\rt)\\
    &\leq g_0(y_0) + \sup_{X_0} u_0
    \leq \sup_{\pl\BB_r \cap X_0} g_0 + \sup_{X_0} u_0.
\end{align*}
The upper semi-continuity follows by taking $r \to 0$ and the continuity of $g$.
Combining with Proposition~\ref{prop:Hartogs_conv_subseq_family}~\ref{item:hartogs_2}, one has the continuity of supremum.
\end{proof}

\section{Upper semi-continuity of Monge--Amp\`ere energies in families}\label{sec:usc_energy}
In this section, we establish Proposition~\ref{bigprop:usc_energy}, the upper semi-continuity of the energies with respect to the convergence in families.

\begin{prop}\label{prop:usc_energy_family}
Let $\pi: \CX \to \BD$ be a family that fulfills Setting~\ref{sett:general_sett_normal_fibre} and let $\om$ be a K\"ahler metric induced by the curvature of a positive $\BQ$-line bundle $\CL$ over $\CX$.
If a sequence $(u_k)_k \in \PSH_{\fibre}(\CX, \om)$ converges to $u_0 \in \PSH(X_0, \om_0)$ in the family sense, then 
\[
    \limsup_{k \to +\infty} \E_k (u_k) \leq \E_0 (u_0).
\]
\end{prop}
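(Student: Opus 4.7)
The plan is to regularize each $u_k$ by a continuous $\om_{t_k}$-psh function via a Bergman kernel approximation built from sections of $m\CL$, and use the family convergence results of Section~\ref{sec:conv_fami} to show that the energies of these algebraic approximations converge in the family sense; the conclusion then follows from monotonicity of $\E$ together with the classical convergence of Demailly regularizations on the fixed central fibre. The hypothesis that $\om$ comes from a positive $\BQ$-line bundle $\CL$ is precisely what makes such an approach feasible in a family. After shrinking $\BD$, Kodaira vanishing and Grauert's theorem guarantee that $\pi_{\ast}(m\CL)$ is locally free of constant rank $N_m$ for $m$ large enough (and divisible by the denominator of $\CL$); fix a holomorphic frame $(\sigma_\af)_{\af=1}^{N_m}$ and denote by $\sigma_\af^{t}$ its restrictions to the fibres.

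Normalizing $\sup_{X_k} u_k=0$, let $(\tilde s_\af^{\,t,v})_\af$ be the Gram--Schmidt orthonormalization of $(\sigma_\af^t)_\af$ for the weighted inner product $\langle s,s'\rangle_{t,v,m}:=\int_{X_t}\langle s,s'\rangle_{mh}\,e^{-2mv}\om_t^n$, and set
\[
    \rho_{t,m}(v) := \frac{1}{2m}\log\sum_{\af=1}^{N_m}\lt|\tilde s_\af^{\,t,v}\rt|^2_{mh}.
\]
Standard Demailly--Tian theory gives: $\rho_{t,m}(v)$ is continuous on $X_t$, belongs to $\PSH(X_t,(1+\vep_m)\om_t)$ with $\vep_m\to 0$, and satisfies $\rho_{t,m}(v)\ge v-\vep_m$ pointwise (Ohsawa--Takegoshi, applied on each fibre). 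Rescaling $\tilde\rho_{t,m}(v):=(1+\vep_m)^{-1}\rho_{t,m}(v)$ produces a genuine continuous $\om_t$-psh function still satisfying $\tilde\rho_{t,m}(v)\ge v-\vep_m'$ with $\vep_m'\to 0$. The crucial step is to show that, for fixed $m$, $\tilde\rho_{t_k,m}(u_k)$ converges $\CC^\infty$ to $\tilde\rho_{0,m}(u_0)$ in the family sense. This reduces to the family convergence of the Gram matrix entries
\[
    G_{\af\bt}^k := \int_{X_k}\langle\sigma_\af^{t_k},\sigma_\bt^{t_k}\rangle_{mh}\,e^{-2m u_k}\om_k^n \xrightarrow[k\to+\infty]{} G_{\af\bt}^0,
\]
which are integrals of bounded smooth sections against the uniformly integrable densities $e^{-2mu_k}\om_k^n$: the Skoda--Zeriahi bound of Theorem~\ref{thm:SL_and_Skoda_in_family}, applied iteratively on a partition of unity in order to absorb the factor $2m/\af$, provides the needed uniform $L^1$-bound, after which a chart-by-chart argument as in Lemma~\ref{lem:L1_integral_converge} yields convergence. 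Consequently the orthonormalizing change of basis varies smoothly with $k$, giving the $\CC^\infty$ family convergence of $\tilde\rho_{t_k,m}(u_k)$, and Proposition~\ref{prop:hartogs_currents} then produces $\E_{t_k}(\tilde\rho_{t_k,m}(u_k))\to\E_0(\tilde\rho_{0,m}(u_0))$.

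Monotonicity of $\E$ together with $\tilde\rho_{t,m}(v)\ge v-\vep_m'$ yields $\E_{t_k}(u_k)\le \E_{t_k}(\tilde\rho_{t_k,m}(u_k))+\vep_m'$; letting $k\to+\infty$ at fixed $m$ gives $\limsup_{k}\E_{t_k}(u_k)\le \E_0(\tilde\rho_{0,m}(u_0))+\vep_m'$, and finally $m\to+\infty$ together with the classical convergence $\E_0(\tilde\rho_{0,m}(u_0))\to\E_0(u_0)$ of Demailly regularizations on a fixed normal K\"ahler variety completes the argument. The main obstacle I anticipate is the $\CC^\infty$ family convergence of the Bergman kernels: it requires uniform Skoda-type integrability of $e^{-2mu_k}\om_k^n$ with exponent much larger than the universal $\af$ from Theorem~\ref{thm:SL_and_Skoda_in_family}, which has to be produced by localizing and iterating on a partition of unity, and it demands a careful treatment near the singular locus of $\pi$ where the locally trivial structure breaks down so that all estimates must pass through the family-convergence techniques of Section~\ref{sec:conv_fami} rather than through direct coordinate comparison.
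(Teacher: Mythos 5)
Your overall strategy (Bergman-kernel/algebraic approximation of $u_k$ using weighted $L^2$ inner products on sections of $m\CL$, then monotonicity of $\E$) is the same as the paper's, but the proposal has a genuine gap at its central step: you invoke ``Ohsawa--Takegoshi, applied on each fibre'' to get the pointwise lower bound $\rho_{t,m}(v)\geq v-\vep_m$ on all of $X_t$, and later the ``classical convergence of Demailly regularizations'' $\E_0(\tilde\rho_{0,m}(u_0))\to\E_0(u_0)$ on $X_0$. Neither is available here: the fibres are singular normal K\"ahler spaces (no smoothness or smoothing hypothesis is made), and the Ohsawa--Takegoshi extension theorem -- hence the Demailly--Tian lower bound for the Bergman approximation -- fails at singular points. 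This is precisely the obstruction the paper's proof is built to circumvent: Lemma~\ref{lem:OT_smooth_loc} gives a uniform extension statement only on relatively compact subsets $\CU\Subset\CX\setminus\CZ$, with constants depending on $\CU$, so the lower bound $u_k\leq u_k^m+O(1/m)$ holds only away from the singular locus (Step 2); the contribution of a neighborhood of $\CZ$ to the energy is then controlled by truncating the potentials and using the capacity estimate of Lemma~\ref{lem:small_cap_near_sing} (Step 5); and instead of convergence of regularizations on $X_0$, one only proves that the weak limit $\hu$ of the approximants satisfies $\hu\leq u_0$ (Step 4), which suffices by monotonicity and upper semi-continuity of $\E_0$ on the fixed fibre. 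Without a substitute for these steps your chain of inequalities $\E_{t_k}(u_k)\leq\E_{t_k}(\tilde\rho_{t_k,m}(u_k))+\vep_m'$ and the final limit in $m$ are unjustified near $X_t^{\sing}$ and $X_0^{\sing}$.

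Two further points would also need repair. First, your Gram-matrix convergence uses the densities $e^{-2mu_k}\om_k^n$; for unbounded $u_k$ these need not be integrable (positive Lelong numbers kill integrability for large $m$), so one must first reduce to uniformly bounded potentials by truncation and monotone convergence, as in Step 0 of the paper, and even then the convergence of these weighted integrals is not a consequence of Theorem~\ref{thm:SL_and_Skoda_in_family} by ``iterating on a partition of unity''. Second, Proposition~\ref{prop:hartogs_currents} gives convergence of the currents $\ddc u\w[X_t]$, not of the Monge--Amp\`ere energies; passing from $\CC^0$ (or $\CC^\infty$) convergence on the smooth locus to convergence of the global mixed Monge--Amp\`ere integrals requires the local continuity of the Monge--Amp\`ere operator under uniform convergence together with a quantitative bound on the mass near the singular locus, which is again supplied by the capacity argument you have not provided.
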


We conjecture that the upper semi-continuity of the Monge--Amp\`ere energy holds without additional assumptions on the K\"ahler metric $\om$. 

\smallskip
Define 
\[
    \CE_{\fibre/K, C}^1(\CX,\om) := \set{u_t \in \CE^1(X_t,\om_t)}{\E_t(u_t) \geq -C, u_t \leq 0, \text{ and } t \in K \subset \BD}.
\]
Combining Propositions~\ref{prop:Hartogs_conv_subseq_family} and \ref{prop:usc_energy_family}, we obtain the following compactness result:

\begin{cor}
Let $\pi: \CX \to \BD$ be a family satisfying Setting~\ref{sett:general_sett_normal_fibre}. 
Let $\om$ be a K\"ahler metric induced by the curvature of a positive $\BQ$-line bundle.
For all compact subsets $K \subset \BD$, the space $\CE^1_{\fibre/K, C}(\CX, \om)$ is compact with respect to the convergence in families. 
\end{cor}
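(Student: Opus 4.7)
The plan is to take an arbitrary sequence $(u_k)_k$ in $\CE^1_{\fibre/K, C}(\CX, \om)$ with $u_k \in \CE^1(X_{t_k}, \om_{t_k})$ and extract a subsequence converging in the family sense to an element of the same class. First, since $K\subset \BD$ is compact, I pass to a subsequence so that $t_k \to t_\infty \in K$. After translating the disk so that $t_\infty$ plays the role of the distinguished fibre (all nearby fibres are normal compact K\"ahler by the remark following Setting~\ref{sett:general_sett_normal_fibre}, and the curvature hypothesis on $\om$ is local), I may assume $t_\infty = 0$ without loss of generality.

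Next, I want to apply the Hartogs-type subsequence extraction Proposition~\ref{prop:Hartogs_conv_subseq_family}~\ref{item:hartogs_1}, which requires a uniform bound on $\sup_{X_k} u_k$. The normalization $u_k \leq 0$ yields the upper bound $\sup_{X_k} u_k \leq 0$. For the lower bound I invoke that $\E_{t_k}$ is monotone and satisfies $\E_{t_k}(c) = c$ on constants, so applying monotonicity to $u_k \leq \sup_{X_k} u_k$ gives $\E_{t_k}(u_k) \leq \sup_{X_k} u_k$; combined with the energy bound $\E_{t_k}(u_k) \geq -C$ this produces
\[
    -C \leq \sup_{X_k} u_k \leq 0
\]
uniformly in $k$. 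Proposition~\ref{prop:Hartogs_conv_subseq_family}~\ref{item:hartogs_1} then provides, after passing to a further subsequence, a family-limit $u_0 \in \PSH(X_0, \om_0)$.

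It remains to verify that $u_0$ belongs to $\CE^1_{\fibre/K, C}(\CX, \om)$. The base-point condition is automatic since $0 = t_\infty \in K$. The bound $u_0 \leq 0$ is a consequence of the lower semi-continuity statement in Proposition~\ref{prop:Hartogs_conv_subseq_family}~\ref{item:hartogs_2}, giving $\sup_{X_0} u_0 \leq \liminf_k \sup_{X_k} u_k \leq 0$. Finally the energy bound $\E_0(u_0) \geq -C$ is exactly what the upper semi-continuity of the Monge--Amp\`ere energy along family-convergent sequences (Proposition~\ref{prop:usc_energy_family}) delivers, via
\[
    -C \leq \limsup_{k \to +\infty} \E_{t_k}(u_k) \leq \E_0(u_0),
\]
so in particular $\E_0(u_0) > -\infty$ and $u_0 \in \CE^1(X_0, \om_0)$ as required.

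Conceptually this corollary is a direct packaging of the two pillars developed in the previous sections: Proposition~\ref{prop:Hartogs_conv_subseq_family} supplies precompactness at the level of quasi-plurisubharmonic functions, while Proposition~\ref{prop:usc_energy_family} is precisely the missing ingredient ensuring that the energy bound survives in the limit. Consequently I do not expect a real obstacle; the only point requiring care is the reduction $t_k \to t_\infty = 0$, which is legitimate because the geometric, analytic, and line-bundle hypotheses of Setting~\ref{sett:general_sett_normal_fibre} and Proposition~\ref{prop:usc_energy_family} are invariant under translation in $\BD$.
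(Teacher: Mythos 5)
Your proof is correct and follows essentially the same route as the paper, which obtains the corollary precisely by combining Proposition~\ref{prop:Hartogs_conv_subseq_family} (subsequence extraction, after noting the uniform bound $-C\leq \E_{t_k}(u_k)\leq \sup_{X_{t_k}}u_k\leq 0$) with Proposition~\ref{prop:usc_energy_family} (to keep the energy bound, hence membership in $\CE^1$, in the limit), together with the lower semi-continuity of the supremum to preserve $u_0\leq 0$. Your added care about translating the base point $t_\infty$ to $0$ and the normality of nearby fibres is consistent with the paper's conventions and introduces no gap.
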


\subsection{Preparations}
In this section, we review a few results used in the proof of Proposition~\ref{prop:usc_energy_family} with some modifications in the family setting. 
We first recall a slightly revised version of H\"ormander's $L^2$-estimates for the $\db$-operator (see e.g. \cite[Thm 5.1.1]{Berndtsson_Notes}).

\begin{lem}\label{lem:Hormander_L2}
Let $(X,\om)$ be a projective manifold. 
Let $E$ be a holomorphic line bundle equipped with a possibly singular hermitian metric $h_E e^{-\phi}$ where $h_E$ is a smooth hermitian metric, and $\phi$ is quasi-psh. 
Suppose that 
\[
    \ii\Ta_{h_E}(E) + \ddc \phi \geq \vep \om
    \quad \text{on an open set } U \subset X
\]
for some $\vep > 0$. 
Then for any $q \geq 1$ and for any $f \in \Gm(X, \bigwedge^{n,q} T_X^\ast \otimes E)$ with $\db f = 0$ and $\supp(f) \subset U$, there exists a section $g \in \Gm(X, \bigwedge^{n,q-1} T_X^\ast \otimes E)$ such that $\db g = f$ and 
\[
    \int_X |g|_{h_E, \om}^2 e^{-\phi} \om^n 
    \leq \frac{1}{\vep q} \int_X |f|_{h_E, \om}^2 e^{-\phi} \om^n.
\]
\end{lem}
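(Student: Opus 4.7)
The plan is the classical Hörmander–Andreotti–Vesentini $L^2$ approach: translate the $\bar\partial$-solvability and the $L^2$-estimate into an a priori bound on the dual side via the Bochner–Kodaira–Nakano (BKN) identity, after first reducing the singular weight $\phi$ to a smooth one by regularization.

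As a preliminary step I would replace $\phi$ by smooth approximants. Demailly's global regularization on the projective manifold $(X,\om)$ furnishes a decreasing family $\phi_\nu \searrow \phi$ of smooth quasi-psh functions with $i\partial\bar\partial \phi_\nu \geq i\partial\bar\partial \phi - \delta_\nu \om$ for some $\delta_\nu \searrow 0$; hence $i\Theta_{h_E}+i\partial\bar\partial \phi_\nu \geq (\vep - \delta_\nu)\om$ on $U$. Fix a relatively compact neighborhood $V$ with $\supp(f) \Subset V \Subset U$, so that for $\nu$ large the curvature is $\geq \tfrac{\vep}{2}\om$ on $V$.

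For each smooth weight $\phi_\nu$, I would apply the BKN identity on $(X,\om)$ with values in $(E, h_E e^{-\phi_\nu})$: for any smooth compactly supported $(n,q)$-form $\alpha$,
\[
    \|\bar\partial \alpha\|_{\phi_\nu}^2 + \|\bar\partial^* \alpha\|_{\phi_\nu}^2 \geq \int_X \langle [i\Theta_{h_E}+i\partial\bar\partial \phi_\nu, \Ld] \alpha, \alpha\rangle\, e^{-\phi_\nu}\om^n.
\]
Since $\supp(f) \subset V$, Cauchy–Schwarz yields $|\langle f, \alpha\rangle_{\phi_\nu}|^2 \leq \|f\|_{\phi_\nu}^2 \int_V |\alpha|^2 e^{-\phi_\nu}\om^n$, and on $V$ the pointwise bound $[i\Theta_{h_E} + i\partial\bar\partial \phi_\nu, \Ld] \geq q(\vep-\delta_\nu)\,\mathrm{Id}$ holds on $(n,q)$-forms. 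To promote this localized positivity to a global a priori estimate I would employ Demailly's twisting/cutoff trick: apply BKN to $\chi \alpha$ for a smooth cutoff $\chi$ equal to $1$ near $\supp(f)$ with $\supp\chi \subset V$, absorbing the commutator error coming from $\bar\partial \chi$ into a small weight perturbation supported away from $\supp(f)$. This produces the a priori inequality
\[
    |\langle f, \alpha\rangle_{\phi_\nu}|^2 \leq \tfrac{1}{q(\vep-\delta_\nu)}\,\|f\|_{\phi_\nu}^2\bigl(\|\bar\partial\alpha\|_{\phi_\nu}^2 + \|\bar\partial^* \alpha\|_{\phi_\nu}^2\bigr),
\]
and Hahn–Banach then delivers $g_\nu$ with $\bar\partial g_\nu = f$ and $\int_X |g_\nu|^2 e^{-\phi_\nu}\om^n \leq \tfrac{1}{q(\vep-\delta_\nu)}\int_X |f|^2 e^{-\phi_\nu}\om^n$.

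Finally, pass to the limit $\nu \to \infty$. Monotone convergence handles $\int_X |f|^2 e^{-\phi_\nu}\om^n \to \int_X |f|^2 e^{-\phi}\om^n$, while the uniform $L^2$-boundedness of $(g_\nu)$ (using $e^{-\phi_\nu} \geq e^{-\phi_0}$ bounded below) allows extraction of a weak $L^2$-limit $g$ satisfying $\bar\partial g = f$ distributionally; Fatou's lemma applied as $\phi_\nu \searrow \phi$ yields the desired bound with the sharp constant $\tfrac{1}{q\vep}$. The main obstacle is the twisting step: it must be executed so that the cutoff error vanishes in the limit without spoiling the sharp constant, which requires a delicate choice of cutoff geometry and perturbation scale.
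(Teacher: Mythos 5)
The paper gives no proof of this lemma (it is quoted from Berndtsson's lecture notes), so I assess your argument on its own terms. The overall skeleton --- Bochner--Kodaira--Nakano, a priori estimate, Hahn--Banach, regularize the weight and pass to the limit --- is the right framework, and your final limiting step (monotone convergence for $f$, weak compactness plus Fatou for $g_\nu$) is fine. The fatal point is exactly the step you flag as ``the main obstacle'': promoting positivity on $U$ to a global a priori estimate by applying the identity to $\chi\alpha$. This cannot be repaired: the commutator error $\lVert\bar\partial\chi\wedge\alpha\rVert^2$ lives in the region where the curvature has no sign, and nothing is available to absorb it. Worse, under the hypotheses as literally stated no argument of any kind can succeed, because the conclusion is false without a global sign condition. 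Take $X=\mathbb{P}^1$, $E=\mathcal{O}_X$ with the flat metric $h_E$, and $\phi$ a smooth function with $\ddc\phi\geq\varepsilon\omega$ on a disc $U$ (necessarily $\ddc\phi<0$ somewhere else, since $\int_{\mathbb{P}^1}\ddc\phi=0$). Any $(1,1)$-form $f\geq 0$ supported in $U$ with $\int_X f\neq 0$ satisfies all the hypotheses with $q=1$, yet is not $\bar\partial$-exact because $H^{1,1}_{\bar\partial}(\mathbb{P}^1)\cong\mathbb{C}$ is detected by $f\mapsto\int_X f$. The statement therefore needs the extra hypothesis that the curvature current $i\Theta_\phi:=i\Theta_{h_E}(E)+\ddc\phi$ is $\geq 0$ on all of $X$ (and $\geq\varepsilon\omega$ only on $U$); this is what holds, or is meant to hold, in the application in Lemma~\ref{lem:OT_smooth_loc}.

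Once global semipositivity is assumed, no cutoff is needed at all: for $\alpha\in\ker\bar\partial\cap\mathrm{Dom}(\bar\partial^*)$ the Cauchy--Schwarz inequality and $[i\Theta_\phi,\Lambda]\geq q\varepsilon\,\mathrm{Id}$ on $U$ give $|\langle f,\alpha\rangle|^2\leq\bigl(\int_U\tfrac{1}{q\varepsilon}|f|^2e^{-\phi}\omega^n\bigr)\bigl(\int_U\langle[i\Theta_\phi,\Lambda]\alpha,\alpha\rangle e^{-\phi}\omega^n\bigr)$, and the second factor is $\leq\int_X\langle[i\Theta_\phi,\Lambda]\alpha,\alpha\rangle e^{-\phi}\omega^n\leq\lVert\bar\partial^*\alpha\rVert^2+\lVert\bar\partial\alpha\rVert^2$ precisely because the integrand is everywhere nonnegative; the support condition on $f$ does all the localizing for you. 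A second, smaller gap: global Demailly regularization of a quasi-psh $\phi$ with curvature loss $\delta_\nu\omega$, $\delta_\nu\to 0$, is not available when $\phi$ has positive Lelong numbers (the loss is governed by them), and in the intended application $\phi$ contains terms like $2n\log|z-w|$. The standard route is instead to delete a hyperplane section $H$ so that $X\setminus H$ is Stein and carries a complete K\"ahler metric, regularize there by decreasing sequences of genuinely psh functions (Forn{\ae}ss--Narasimhan), run the estimate on $X\setminus H$, and extend the solution across $H$ using that $H$ is negligible for $L^2$ sections.
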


Then we prove a uniform Ohsawa--Takegoshi theorem \cite{Ohsawa_Takegoshi_1987} on a relatively compact open subset inside the smooth locus.

\begin{lem}\label{lem:OT_smooth_loc}
Suppose that $\pi: \CX \to \BD$ fulfills Setting~\ref{sett:general_sett_normal_fibre} and $\CL$ is a positive line bundle over $\CX$. 
Let $\om = \ii\Ta_h(\CL)$ be a K\"ahler metric in $c_1(\CL)$.
Let $\CZ$ be the singular locus of $\pi$. 
Fix $\vep > 0$.
On any relatively compact open subset $\CU \Subset \CX \setminus \CZ$, there exist constants $m_0 = m_0(\vep, \CU, \CX, \om)$ and $C_{\OT} = C_{\OT}(\vep, \CU, \CX, \om)$ such that, up to shrinking $\BD$, for any
\begin{itemize}
    \item $m \geq m_0$,
    \item $x \in \CU \setminus X_0$,
    \item $0 \neq v \in {m L_\tau}_{|x}$ where $\tau = \pi(x)$,
    \item $u \in \PSH(X_\tau, \om_\tau)$ with $\om_\tau + \ddc_\tau u \geq \vep \om_\tau$,
\end{itemize}
there exists a section $s \in H^0(X_\tau, m L_\tau)$ such that $s(x) = v$ and satisfies
\[
    \int_{X_\tau} |s|_{h^m_\tau}^2 e^{-m u} \om_\tau^n
    \leq C_{\OT} |s(x)|_{h^m_\tau}^2 e^{-m u(x)}.
\]
\end{lem}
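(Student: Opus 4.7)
The proof adapts the classical Ohsawa--Takegoshi extension scheme (via H\"ormander's $L^2$ method) to the family setting, with care to ensure uniform constants. The strategy has four steps: pass to a local Euclidean chart on $\CX \setminus \CZ$ via the relative trivialization; build a local holomorphic extension on a small ball around $x$ by the classical Ohsawa--Takegoshi theorem for extension from a point on a bounded pseudoconvex domain; globalize via a cutoff; correct the resulting smooth section to a holomorphic one by solving a $\bar\partial$-equation via Lemma~\ref{lem:Hormander_L2} with a weight having a log pole of order $2n$ at $x$, which forces the correction to vanish at $x$.

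Concretely, by compactness of $\overline{\CU} \subset \CX \setminus \CZ$, after shrinking $\BD$ one fixes a locally trivial finite cover $(B_\af, F_\af, \CB_\af)_\af$ of a neighborhood of $\overline{\CU}$ in $\CX \setminus \CZ$, trivializing $\CL$ on each $\CB_\af$ by a holomorphic frame $e_\af$ with smooth potential $g_\af := -\log|e_\af|_h^2$ (so $\ddc g_\af = \om$). Given $(x,\tau)$ with $x \in \tfrac{1}{2}\CB_\af$ and $\tau \neq 0$, set $y := F_{\af,\tau}^{-1}(x)$, $r_0 := r_\af/4$, and write $v = v_0 \cdot e_{\af,\tau}(x)^m$. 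Since $g_\af + u$ is psh on $B_\af$, the Ohsawa--Takegoshi extension theorem on the Euclidean ball $B_\af$ yields a holomorphic $F$ on $B_\af$ with $F(y) = v_0$ and
\[
    \int_{B_\af} |F|^2 e^{-m(g_\af+u)} \dd V_{\mathrm{Eucl}} \leq C_0 |v_0|^2 e^{-m(g_\af+u)(y)},
\]
for a constant $C_0 = C_0(n, r_\af)$. Choose a cutoff $\chi$ on $B_\af$ equal to $1$ on $B(y, r_0)$ with $\supp \chi \subset B(y, 2r_0) \subset B_\af$, set $\sigma := (\chi\circ F_{\af,\tau}^{-1}) \cdot F \cdot e_{\af,\tau}^m$ on $B_{\af,\tau}$ extended by zero to $X_\tau$, and introduce the singular weight
\[
    \psi_\tau(z) := 2n\,(\chi\circ F_{\af,\tau}^{-1})(z) \, \log\lt(\frac{|F_{\af,\tau}^{-1}(z) - y|}{2r_0}\rt),
\]
extended by zero outside $\CB_\af \cap X_\tau$. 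Then $\psi_\tau \leq 0$, has a log pole of order $2n$ at $y$, and its smooth part satisfies $|\ddc \psi_\tau| \leq C_\psi/r_0^2$ on $X_\tau \setminus \{y\}$ uniformly in $\tau$. Hence the curvature of $h_\tau^m e^{-mu-\psi_\tau}$ satisfies $m(\om_\tau + \ddc u) + \ddc \psi_\tau \geq (m\vep - C_\psi/r_0^2)\om_\tau \geq (m\vep/2)\om_\tau$ on the open set $X_\tau \setminus \{y\}$ for $m \geq m_0 := 2 C_\psi/(\vep r_0^2)$. Applying Lemma~\ref{lem:Hormander_L2} produces $g$ with $\bar\partial g = \bar\partial \sigma$, $g(y) = 0$ (forced by the $2n$-log pole and $L^2$-integrability), and the usual $L^2$ bound. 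Setting $s := \sigma - g \in H^0(X_\tau, mL_\tau)$ one has $s(x) = v$; splitting $|s|^2 \leq 2(|\sigma|^2 + |g|^2)$, the $\sigma$-term is directly controlled by the above local Ohsawa--Takegoshi inequality, while the $g$-term, using $e^{-\psi_\tau} \leq 2^{2n}$ and $|\bar\partial\chi|^2 \leq C/r_0^2$ on the annular support of $\bar\partial \sigma$, again reduces to that same local inequality applied to $F$. Summing yields the desired estimate with a uniform $C_{\OT}$.

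The main obstacle is uniformity of all constants across $\tau, x, u, v$. This is absorbed because the finiteness of the cover, the smoothness of the $F_\af$, and shrinking of $\BD$ yield uniform bounds on the cutoff, on $C_0$, on the quasi-isometry between $\om_\tau$ and the Euclidean metric on each $B_\af$, and on the smooth part of $\ddc \psi_\tau$; the hypothesis $\om_\tau + \ddc u \geq \vep \om_\tau$ gives the uniform curvature lower bound required by Lemma~\ref{lem:Hormander_L2}. A subsidiary technical point is that Lemma~\ref{lem:Hormander_L2} is stated on a projective manifold, whereas $X_\tau$ may have singularities outside $\CU$: since $X_\tau$ is normal after shrinking $\BD$, one applies H\"ormander on the K\"ahler manifold $X_\tau^\reg$ and extends the resulting holomorphic section across the singular locus by Riemann's second extension theorem.
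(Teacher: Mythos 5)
Your outline coincides with the paper's: extend locally on a Euclidean ball through the trivialization $F_{\af,\tau}$ by an Ohsawa--Takegoshi theorem, glue with a cutoff, and correct by solving a $\db$-equation with an auxiliary weight having a $2n$-log pole at the point so that the correction vanishes there, with uniformity coming from the finiteness of the cover and the hypothesis $\om_\tau+\ddc_\tau u\geq \vep\om_\tau$. The genuine gap is in the one step where the singular fibres actually bite, namely how the $\db$-equation is solved on $X_\tau$. You propose to apply Lemma~\ref{lem:Hormander_L2} ``on the K\"ahler manifold $X_\tau^\reg$'' with the weight $h_\tau^m e^{-mu-\psi_\tau}$ and to conclude by Riemann extension. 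First, Lemma~\ref{lem:Hormander_L2} is stated for a projective manifold, and $X_\tau^\reg$ is non-compact; even granting a version for manifolds carrying a complete K\"ahler metric, the curvature you verify is the wrong one. To produce a holomorphic \emph{section} of $mL_\tau$ by the $(n,q)$-machinery of Lemma~\ref{lem:Hormander_L2} one must view sections as $(n,0)$-forms with values in $E=mL_\tau\otimes K^{-1}$, and the curvature hypothesis then concerns $m(\om_\tau+\ddc_\tau u)+\ddc_\tau\psi_\tau+\Ric(\om_\tau)$, not just $m(\om_\tau+\ddc_\tau u)+\ddc_\tau\psi_\tau$. Near $X_\tau^\sing$ the term $\Ric(\om_\tau)$ has no lower bound (and in Setting~\ref{sett:general_sett_normal_fibre} there is no Gorenstein assumption, so $K_{X_\tau}$ need not even be $\BQ$-Cartier, i.e.\ there is no global metric on $K^{-1}$ to twist with), so your inequality $\geq (m\vep/2)\,\om_\tau$ on $X_\tau\setminus\{y\}$ is not the hypothesis of the lemma, and no constant uniform in $\tau$, $x$, $u$ comes out. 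Nor can you simply discard the curvature away from $\supp(\db\sigma)$: the Bochner--Kodaira a priori inequality behind Lemma~\ref{lem:Hormander_L2} still requires the curvature term to be under control (at least semi-positive) where positivity is not assumed, which fails near the singularities in your setup.

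This is exactly the point where the paper's proof introduces its main device, which your argument is missing. There one takes a resolution $p:\CY\to\CX$ which, after shrinking $\BD$, resolves every fibre $X_\tau$ with $\tau\neq 0$ (generic smoothness; note the lemma only concerns $x\in\CU\setminus X_0$), fixes a K\"ahler metric $\om_\CY\geq p^\ast\om$ on $\CY$ normalized so that $\om_{\CY,\tau}+\Ric(\om_{\CY,\tau})\geq 0$ on a fixed compact neighbourhood of $p^{-1}(\CU)$, and, using that $p^\ast\om$ is semi-positive and big, a negative $\psi\in\PSH(\CY,p^\ast\om)$ with $p^\ast\om+\ddc\psi\geq\dt\om_\CY$ and poles along $\Exc(p)$. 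Adding the weight $e^{-(1/\dt+1)\psi}$ absorbs simultaneously the degeneracy of $p^\ast\om_\tau$ and the Ricci contribution of $K_{Y_\tau}^{-1}$, so Lemma~\ref{lem:Hormander_L2} applies on the smooth projective fibre $Y_\tau$ with constants independent of $\tau$; the resulting holomorphic section descends to $X_\tau$ by normality, and the extra weight is harmless because $\psi$ is bounded on $p^{-1}$ of the region where the data lives. Without this (or some substitute providing a uniform curvature bound for the twisted bundle across $X_\tau^\sing$), your step ``apply H\"ormander on $X_\tau^\reg$ and extend'' does not go through; the rest of your argument (the local extension, the log-pole weight forcing $g(y)=0$, the final splitting $|s|^2\leq 2(|\sigma|^2+|g|^2)$) matches the paper's Step 2.
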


\begin{proof}
We first fix some background data.
Let $(B_\af, F_\af, \CB_\af)_\af$ be a locally trivial finite cover of $\CU$ (see Notation~\ref{nota}). 
Take an open set $\CV \Subset \CX \setminus \CZ$ such that $\CB_\af \Subset \CV \Subset \CX \setminus \CZ$ for each $\af$.
Write $U_t := \CU \cap X_t$ and $V_t := \CV \cap X_t$.
Without loss of generality, we can assume that $\CL$ is trivial over each $\CB_\af$ and $h = e^{-g_\af}$ on $\CB_\af$.
Here $g_\af$ is a smooth strictly psh function on $\CB_\af$ and $\om = \ddc g_\af$.

\smallskip
\noindent{\bf Step 1: curvature control.}
For each $\af$, we choose a cutoff $\eta_{\af,0}$ on $B_\af$ such that $\eta_{\af,0}$ is supported in $B_\af$ and $\eta_{\af,0} \equiv 1$ on $\frac{3}{4} B_\af$.
For each $w \in \frac{1}{2} B_\af$, the function 
\[
    \phi_{\af,\tau,w} := (F_{\af,\tau})_\ast \lt(2n \eta_{\af,0} \log |z-w|\rt)
\] 
is quasi-psh on $X_\tau$. 
Up to shrinking $\BD$, there exists a uniform constant $A>0$ such that 
\[
    A \om_t + \ddc_t \phi_{\af,t,w} \geq 0
    \quad\text{and}\quad
    \sup_{X_t} \abs{\db_t \eta_{\af,t}}_{\om_t}^2 \leq A
\]
for all $t \in \BD$, for all $w \in \frac{1}{2} B_\af$, and for all $\af$.

\smallskip
Let $p: \CY \to \CX$ be a resolution of singularities of $\CX$. 
By generic smoothness, up to further shrinking $\BD$, one may assume that the map $p_t := p_{|Y_t}: Y_t \to X_t$ is a resolution of singularities of $X_t$ for all $t \in \BD^\ast$. We can also assume that there exists a constant $C_\CU \geq 1$ such that $C_{\CU}^{-1} \dd V_{\BC^n} \leq F_{\af,t}^\ast \om_t^n \leq C_\CU \dd V_{\BC^n}$ for any $\af$.

\smallskip
Pick a K\"ahler metric $\om_\CY$ on $\CY$ with $\om_\CY \geq p^\ast \om$. 
After rescaling the metric $\om_\CY$, one can assume that for every $t$,
\[
    \om_{\CY,t} + \Ric(\om_{\CY,t}) \geq 0 
    \quad\text{on } p^{-1}(\CV) \cap Y_t.
\]
Since $p^\ast \om$ is semi-positive and big, there is a negative quasi-psh function $\psi \in \PSH(\CY, p^\ast \om)$ such that
\[
    p^\ast \om + \ddc \psi \geq \dt \om_\CY
\]
for some $\dt > 0$ and $\{\psi = -\infty\} = \Exc(p)$.
In addition, one may further assume that 
\[ 
    \psi > -C_\CU 
    \quad\text{and}\quad
    C_{\CU}^{-1} \om_\CY \leq p^\ast \om 
\]
on $p^{-1}(\CV)$. 
For $m \geq m_0 := \lceil (A+1)/\vep \rceil + \lceil 1/\vep\dt\rceil$, we have
\[
    \ii \Ta (m L_\tau, h^m_\tau e^{- (m u + \phi_{\af,\tau,x'})})
    = m (\om_\tau + \ddc_\tau u) + \ddc_\tau \phi_{\af,\tau,x'}
    \geq (m\vep - A) \om_\tau 
    \geq (1 + \lceil 1/\dt \rceil) \om_\tau.
\]
Then we derive 
\begin{align*}
    &\ii \Ta\lt(m p_\tau^\ast L_\tau \otimes K_{Y_\tau}^{-1}, p_\tau^\ast h_\tau^m \otimes h_{\om_{\CY,\tau}} e^{- (m u + \phi_{\af,\tau,x'}) \circ p_\tau} e^{-\lt(\frac{1}{\dt} + 1\rt) \psi_\tau}\rt)\\
    &\geq \lt(\frac{1}{\dt} + 1\rt) (p_\tau^\ast \om_\tau + \ddc_\tau \psi_\tau) + \ii\Ta_{h_{\om_{\CY,\tau}}} (K_{Y_\tau}^{-1})
    \geq (1+\dt) \om_{\CY,\tau} + \Ric(\om_{\CY,\tau}),
\end{align*}
and thus, on $p^{-1}(\CV) \cap Y_\tau$, we obtain the following lower bound for the curvature
\[
    \ii \Ta\lt(m p_\tau^\ast L_\tau \otimes K_{Y_\tau}^{-1}, p_\tau^\ast h_\tau^m \otimes h_{\om_{\CY,\tau}} e^{- (m u + \phi_{\af,\tau,x'}) \circ p_\tau} e^{-\lt(\frac{1}{\dt} + 1\rt) \psi_\tau}\rt)
    \geq \dt \om_{\CY,\tau}.
\]

\smallskip
\noindent{\bf Step 2: extension.}
Fix $x \in \CU \setminus X_0$ and $v \in {mL_\tau}_{|x}$ for $m \geq m_0$.
Note that $x \in \frac{1}{2} B_\af$ for some $\af$. 
We define $x' := F_{\af,\tau}^{-1}(x)$ and denote by $v' \in \BC$ the value of $v$ under the fixed trivialization. 
By the sharp Ohsawa--Takegoshi extension theorem \cite[Thm. 3.1]{Berndtsson_Lempert_2016}, one can find a holomorphic function $f$ on $B_\af$ with $f(x') = v'$ and
\[
    C_\CU^{-1} \int_{B_{\af,\tau}} |(F_{\af,\tau})_\ast f|^2 e^{-m (g_{\af,\tau} + u)} \om_\tau^n
    \leq \int_{B_\af} |f|^2 e^{-m F_{\af, \tau}^\ast (g_{\af,\tau} + u)} \dd V_{\BC^n}
    \leq c_\af |v'|^2 e^{-m g_{\af,\tau}(x)} e^{-m u(x)}, 
\]
where $c_\af$ depends only on the dimension $n$, and $r_\af$ is the radius of $B_\af$.
Therefore, we find a local holomorphic section $\sm \in H^0(B_{\af,\tau}, m L_\tau)$ satisfying $\sm(x) = v$.

\smallskip
Taking $E = m p_\tau^\ast L_\tau \otimes K_{Y_\tau}^{-1}$ and applying Lemma~\ref{lem:Hormander_L2}, we can find a smooth section $\zt$ of $m p^\ast L_\tau$ such that $\db_\tau \zt = \db_\tau p_\tau^\ast \eta_{\af,\tau} \otimes p_\tau^\ast \sm$ and 
\begin{align*}
    \int_{Y_\tau} |\zt|_{p_\tau^\ast h^m_\tau}^2 e^{-m p^\ast u} \om_{\CY,\tau}^n
    &\leq \int_{Y_\tau} |\zt|_{p_\tau^\ast h^m_\tau}^2 e^{- p_\tau^\ast \lt(m u + \phi_{\af,\tau,x'}\rt) - \lt(\frac{1}{\dt} + 1\rt) \psi_\tau} \om_{\CY, \tau}^n\\ 
    &\leq \frac{1}{\dt} \int_{Y_\tau} |\db_\tau p_\tau^\ast \eta_{\af,\tau} \otimes p_\tau^\ast \sm|_{\om_{\CY, \tau} \otimes p_\tau^\ast h^m_\tau}^2 e^{-p_\tau^\ast \lt(m u - \phi_{\af,\tau,x'}\rt) - \lt(\frac{1}{\dt} + 1\rt) \psi_\tau} \om_{\CY, \tau}^n \\
    &\leq \frac{e^{\lt(\frac{1}{\dt} + 1\rt) C_\CU}}{\dt} \int_{p_\tau^{-1} \lt(B_{\af,\tau} \setminus \frac{3}{4} B_{\af,\tau}\rt)} |p_\tau^\ast \lt(\db_\tau \eta_{\af,\tau}\rt)|_{\om_{\CY,\tau}}^2 |p_\tau^\ast \sm|_{p_\tau^\ast h^m_\tau}^2 e^{- p_\tau^\ast \lt(mu + \phi_{\af,\tau,x'}\rt)} \om_{\CY, \tau}^n\\
    &\leq \frac{4 A e^{\lt(\frac{1}{\dt} + 1\rt) C_\CU - 2n} C_\CU^{n+1}}{3 r_\af \dt} \int_{B_{\af,\tau}} |\sm|_{h^m_\tau}^2 e^{-mu} \om_\tau^n \\
    &\leq \frac{4 A e^{\lt(\frac{1}{\dt} + 1\rt) C_\CU - 2n} C_\CU^{n+2} c_\af}{3 r_\af \dt} |\sm(x)|_{h^m_\tau}^2 e^{-m u(x)}.
\end{align*}
From the integrability condition, one has $\zt(x) = 0$ and $s := p_\tau^\ast\lt(\eta_{\af,\tau} \sm\rt) - \zt \in H^0(Y_\tau, m p_\tau^\ast L_\tau)$.
By normality of $X_\tau$, the section $s$ descends to a holomorphic section of $m L_\tau$ on $X_\tau$, which we still denote by $s$.
Moreover,
\begin{align*}
    \int_{X_\tau} |s|^2_{h^m_\tau} e^{-mu} \om_\tau^n 
    &\leq 2 \lt(\int_{X_\tau} |\eta_{\af,\tau} \sm|_{h_\tau^m}^2 e^{-mu} \om_\tau^n + \int_{X_\tau} |s - \eta_{\af,\tau} \sm|_{h_\tau^m}^2 e^{-mu} \om_\tau^n\rt)\\
    &\leq 2 \lt(C_\CU c_\af |s(x)|_{h_\tau^m} e^{-m u(x)} + \int_{Y_\tau} |\zt|^2_{p_\tau^\ast h^m_\tau} e^{-m p_\tau^\ast u} \om_{\CY,\tau}^n\rt)\\
    &\leq 2 C_\CU c_\af \lt(1 + \frac{4 A e^{\lt(\frac{1}{\dt} + 1\rt) C_\CU -2n} C_\CU^{n+1}}{3 r_\af \dt}\rt) |s(x)|_{h^m_\tau}^2 e^{-m u(x)}.
\end{align*}
\end{proof}

Before closing this section, we recall the Monge--Amp\`ere capacity on a K\"ahler space $(X,\om)$. 
Let $E \subset X$ be a Borel subset.
Define
\[
    \CAP_\om (E) := \sup \set{\int_E (\om + \ddc u)^n}{-1 \leq u \leq 0,\, u \in \PSH(X,\om)}.
\]
Then we construct neighborhoods of the singular locus with arbitrarily small capacity in a family $\pi: \CX \to \BD$:

\begin{lem}\label{lem:small_cap_near_sing}
Let $\CZ$ be the singular locus of $\pi$. 
For all $\vep > 0$, there exists an open neighborhood $\CW_\vep$ of $\CZ$ such that for all $t \in \BD_{1/2}$,
\[
    \CAP_{\om_t}(\CW_\vep \cap X_t) < \vep.
\]
\end{lem}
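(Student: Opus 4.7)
The plan is to exhibit $\CW_\vep$ as a single sublevel set $\{\psi < -s_\vep\}$ of a globally defined $\om$-plurisubharmonic function $\psi$ on $\CX$ whose pole locus contains $\CZ$, and then control the fibrewise capacity via the standard ``mass over $s$'' sublevel estimate, made uniform in $t$ by Theorem~\ref{thm:SL_and_Skoda_in_family}.

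First I would construct $\psi \in \PSH(\CX, \om)$ with $\sup_{\CX} \psi = 0$, $\{\psi = -\infty\} \supseteq \CZ$, and $\psi$ locally bounded on $\CX \setminus \CZ$. Since $\CZ$ is a closed analytic subset, cover a neighborhood of $\pi^{-1}(\bBD_{3/4})$ by finitely many charts in which $\CZ$ is cut out by holomorphic functions $f_{i,1}, \ldots, f_{i,k_i}$, set $\psi_i := \tfrac{1}{2}\log \sum_j |f_{i,j}|^2$, glue via a partition of unity, and absorb the smooth $\ddc$-error into a large multiple of $\om$; after rescaling one lands in $\PSH(\CX, \om)$. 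To prevent $\sup_{X_t}\psi_t$ from drifting to $-\infty$, I would then produce a compact subset $K \Subset \CX \setminus \CZ$ with $\pi(K) \supseteq \bBD_{1/2}$ by gluing finitely many local holomorphic sections of $\pi$ (which exist in neighborhoods of points of $X_t \setminus \CZ$ because $\pi$ is a submersion off $\CZ$). Since $\psi$ is locally bounded on $\CX \setminus \CZ$, this delivers a uniform lower bound $\sup_{X_t}\psi_t \geq -M$ with $M := -\inf_K \psi < \infty$ for all $t \in \bBD_{1/2}$.

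For the capacity step, fix $t \in \bBD_{1/2}$ and $u \in \PSH(X_t, \om_t)$ with $-1 \leq u \leq 0$, and set $\tilde\psi_t := \psi_t - \sup_{X_t}\psi_t$, so that $\sup_{X_t}\tilde\psi_t = 0$. Working with bounded truncations $\max(\tilde\psi_t, -k)$ (to legitimize integration by parts on the normal variety $X_t$, if needed by passing to a resolution) and using the pointwise positivity $(\om_t + \ddc \tilde\psi_t)\wedge(\om_t + \ddc u)^{n-1} \geq 0$ together with $-u \leq 1$, a standard induction on $n$ yields
\[
    \int_{X_t}(-\tilde\psi_t)(\om_t + \ddc u)^n \leq \int_{X_t}(-\tilde\psi_t)\om_t^n + nV.
\]
Theorem~\ref{thm:SL_and_Skoda_in_family} bounds the right-hand side by $V(C_{SL} + n)$ independently of $t$, so taking the supremum over admissible $u$ gives
\[
    \CAP_{\om_t}\bigl(\{\tilde\psi_t < -s\}\bigr) \leq \frac{V(C_{SL} + n)}{s}.
\]
Since $\sup_{X_t}\psi_t \in [-M, 0]$, one has the inclusion $\{\psi_t < -s\} \subseteq \{\tilde\psi_t < -(s - M)\}$ for every $s > M$, so setting $s_\vep := M + V(C_{SL} + n)/\vep$ and $\CW_\vep := \{\psi < -s_\vep\}$ (which is open in $\CX$ and contains $\CZ$) closes the argument. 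The main technical hurdle is really the first step: producing a single $\psi$ that is simultaneously $\om$-psh on $\CX$, singular along $\CZ$, and bounded on a compact set meeting every nearby fibre; once this geometric input is in hand, the capacity estimate is essentially classical.
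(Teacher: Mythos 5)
Your proof is correct and follows essentially the same route as the paper: a global $\om$-psh function $\psi$ with poles along $\CZ$, a uniform lower bound on $\sup_{X_t}\psi_t$ (which the paper merely asserts, while you justify it via a compact set built from local sections of $\pi$ away from $\CZ$), and then the sublevel-set capacity estimate in the style of \cite[Prop.~9.10]{GZbook} combined with the sup-$L^1$ comparison of Theorem~\ref{thm:SL_and_Skoda_in_family}. The only point to tighten is the gluing: take $\psi=\tfrac12\log\big(\sum_i\theta_i^2\sum_j|f_{i,j}|^2\big)$ (quasi-psh since it is $\log|S|^2_h$ for a holomorphic section $S$ of a trivial bundle with smooth metric $h$), or use a regularized maximum, rather than $\sum_i\theta_i\psi_i$, whose terms $\psi_i\,\ddc\theta_i$ and $\dd\theta_i\w\dc\psi_i$ are not uniformly bounded below near $\CZ$.
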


\begin{proof}
Since $\CZ$ is analytic, up to shrinking $\BD$, one can find a $\om$-psh function $\psi \in \PSH(\CX, \om) \cap \CC^\infty(\CX \setminus \CZ)$ such that $\CZ = \{\psi = -\infty\}$. 
In addition, we may assume that there is a constant $C_\psi \geq 0$ such that $- C_\psi \leq \sup_{X_t} \psi_t \leq 0$ for all $t \in \BD_{1/2}$ where $\psi_t = \psi_{|X_t}$.

\smallskip
Fix a constant $M>0$. 
Following the same strategy as in \cite[Prop.~9.10]{GZbook} 
and the sup-$L^1$ comparison, we have the following
\[
    \CAP_{\om_t}(\{\psi_t < -M\}) \leq \frac{1}{M} \lt(\int_{X_t} -\psi_t \om_t^n + n V\rt)
    \leq \frac{V(n + C_{SL} + C_\psi)}{M}.
\]
Therefore, for all $\vep > 0$, the open subset 
\[
    \CW_\vep := \lt\{\psi < - \frac{V(n + C_{SL} + C_\psi)}{\vep}\rt\} \subset \CX
\] 
contains $\CZ$ and for all $t \in \BD_{1/2}$, $\CAP_{\om_t}(\CW_\vep \cap X_t) < \vep$.
\end{proof}

\subsection{Proof of Proposition~\ref{prop:usc_energy_family}}
Without loss of generality, one can assume that $\CL$ is a line bundle and $h$ is a smooth hermitian metric on $\CL$ such that $\om = \ii \Ta_h(\CL)$.
Again, we set $L_k := \CL_{|X_{k}}$, $h_k := h_{|X_k}$, and $\om_k := \om_{|X_k}$.

\smallskip
Before diving into the proof, let us briefly explain the strategy and ideas here.

\smallskip
\noindent{\bf Strategy and ideas:} 
Let us stress that the (mixed) Monge--Amp\`ere operator is not continuous with respect to the $L^1$-convergence (cf. \cite{Lelong_1983}). 
Our proof relies on a non-trivial approximation argument. 
We first observe that for sequences constructed starting from sections of the global line bundle $\CL$, the $L^1$-convergence in families is equivalent to the $\CC^\infty$-convergence. In this case, we deduce the convergence of the Monge--Amp\`ere energies from the local continuity of the (mixed) Monge--Amp\`ere operator with respect to the uniform convergence.

\smallskip
More precisely, given $(u_k^m)_{m \in \BN}$ the algebraic approximations of $u_k$ (Step 0), our strategy proceeds as follows:
\begin{itemize}
    \item Step 1: We establish a uniform upper bound, in $k$ and $m$, for $u_k^m$.
    
    \item Step 2: We show that, on open sets outside the singularities, $u_k^m \geq u_k-C/m$ uniformly for sufficiently large $k$ and $m$.
    
    \item Step 3: We obtain the $\CC^0$-subconvergence of $(u_k^m)_k$ towards a function $u^m$ on the central fiber, and thus, their local Monge--Amp\`ere measure also converges. 
    
    \item Step 4: We prove that any weak limit $\hu$ of $(u^m)_m$ is less than $u_0$. 
    
    \item Step 5: Roughly speaking, we show that 
    {\small
    \[
        "\limsup_{k\to +\infty} \E_k(u_k) \overset{\text{Step 2}}{\leq} \limsup_{m\to +\infty}\limsup_{k\to +\infty}\E_k(u_k^m)+\vep \overset{\text{Step 3}}{=}\limsup_{m\to +\infty}\E_0(u^m)+\vep
        \leq \E_0(\hu) +\vep\overset{\text{Step 4}}{\leq} \E(u_0)+\vep"
    \]
    }%
    for arbitrarily small $\vep>0$.
\end{itemize}

\noindent{\bf Step 0: reductions and algebraic approximations.}
Subtracting a large constant, one can assume that $(u_k)_k$ is a sequence of negative quasi-psh functions by Proposition~\ref{prop:Hartogs_conv_subseq_family}.
It is sufficient to prove the statement for a sequence of uniformly bounded quasi-psh functions $(u_k)_k$.
Indeed, if the sequence $(u_k)_k$ is not uniformly bounded, we take $u_{k,C} := \max\{u_k, -C\}$ for some $C>0$. 
The sequence $u_{k,C}$ converges to $u_{0,C}$ as $k \to +\infty$ in the sense of families. 
Then if the statement holds for a uniformly bounded sequence, we obtain
\[
    \limsup_{k \to + \infty} \E_k(u_k)
    \leq \limsup_{k \to + \infty} \E_k(u_{k,C}) 
    \leq \E_0(u_{0,C}).
\]
Letting $C \to -\infty$, we have the desired upper semi-continuity property. 
On the other hand, one may also assume that $u_k$ is uniformly strictly $\om$-psh; namely, there exists $\dt > 0$ such that $\om_k + \ddc_k u_k \geq \dt \om_k$ for all $k$.
To do so, one can simply consider $(1 - \dt) u_k$, which decreases to $u_k$ as $\dt \to 0^+$ and use the monotonicity of $\E_k$.
From now on, we assume that $-C_{LB} \leq u_k \leq 0$ and $\om_k + \ddc_k u_k \geq \dt \om_k$ for all $k$.

\smallskip
By the upper semi-continuity of $\BD \ni t \mapsto \dim H^0(X_t, m L_t) =: N_{m,t}$ (cf. \cite[Ch.~3, Prop.~1.7]{Banica_Stanasila_1976}), we have $\limsup_{k \to +\infty} N_{m,k} \leq N_{m,0}$. 
We extract a subsequence $(u_{k_j})$ such that 
\[
    \lim_{j \to + \infty} \E_{k_j}(u_{k_j}) = \limsup_{k \to +\infty} \E_k(u_k) 
    \quad\text{and}\quad 
    \lim_{j \to +\infty} N_{m,k_j} = \limsup_{k \to +\infty} N_{m,k}
\] 
for all $m \in \BN$. 
We set $N_m := \lim_{j \to +\infty} N_{m,k_j}$. 
We still denote this subsequence by $(u_k)_k$.

\smallskip
We consider the algebraic approximation of $u_k$ defined as
\[
    u^{m}_k := \frac{1}{m} \log \lt(\sum_{i=1}^{N_{m,k}} |s^m_{i,k}|_{h^m_k}^2\rt)
\]
where $(s^m_{i,k})_i$ is an orthonormal basis of $H^0(X_k, m L_k)$ with respect to the hermitian inner product
\[
    G_{m,k}(s,t)
    := \int_{X_k} \iprod{s}{t}_{h^m_k} e^{-m u_k} \om_k^n.
\]

\smallskip
\noindent{\bf Step 1: a uniform upper bound on the algebraic approximations.}
This part aims to prove that for all $\vep > 0$, there exist $m_0$ and $k_0$ such that 
\begin{equation}\label{eq:alg_app_upper_bdd}
    u^m_k < \vep
\end{equation}
for any $k \geq k_0$ and $m \geq m_0$.
We recall a submean inequality of $\CC^2$ positive psh functions on an analytic set from \cite[p.~194, Cor.~1]{Chirka_Complex_Analytic_Sets}: if $v$ is a positive $\CC^2$ psh function on an $n$-dimensional analytic set $A$ in $\BC^N$, then 
\begin{equation}\label{eq:Chirka_submean}
    v(x) \leq \frac{c_n}{\mult(A,x) r^{2n}} \int_{\BB_r(x) \cap A} v (\ddc |z|^2)^n
\end{equation}
where $\mult(A,x)$ is the multiplicity of $A$ at $x$ and $c_n$ is a universal constant which only depends on $n$.

\smallskip
There exists a finite number of trivializing charts $(U_\af)_\af$ of $\CX$ such that
\begin{itemize}
    \item $\pi^{-1}(\bBD_{1/2}) \Subset \cup_{\af} U_\af$;
    \item $\CL$ is trivial on $U_\af$.
\end{itemize}
Moreover, without loss of generality, we assume that we have an embedding $j_\af: U_\af \hookrightarrow \BB$ where $\BB$ is the unit ball in $\BC$.
Let $g_\af$ be a local potential of $\om$ on $U_\af$. 
Fix $\vep > 0$.
Up to shrinking $U_\af$, one can find a refinement $(V_\af)_\af$ such that 
\begin{itemize}
    \item $V_\af$ still forms a cover of $\pi^{-1}(\bBD_{1/2})$;
    \item $\dist(V_\af, \pl U_\af) = d > 0$ for all $\af$,
\end{itemize}
and one may also assume that
\begin{itemize}
    \item $\osc_{U_\af} g_\af < \vep/2$ for all $\af$;
    \item there exists $C_\om \geq 1$ such that $C_{\om}^{-1} \ddc|z|^2 \leq \om \leq C_\om \ddc|z|^2$ on each $U_\af \hookrightarrow \BB$.
\end{itemize}

\smallskip
Denote by $e^m_\af$ a local trivialization of $m\CL$ on $U_\af$.
We write $s^m_{k,i} = f^m_{k,i,\af} e_\af^m{}_{|X_k}$ where $f^m_{k,i,\af}$ is a holomorphic function on $X_k \cap U_\af$; hence, 
we have $|s^m_{k,i}|^2_{h^m_k} = |f^m_{k,i,\af}|^2 (h^m(e^m_\af, e^m_\af))_{|X_k \cap U_\af}$.
Recall that $\om = -\frac{1}{m} \ddc \log h^m(e^m_\af,e^m_\af)$. 
Thus, one can assume that $h^m(e^m_\af,e^m_\af) = e^{-m g_\af}$. 
Fix $x \in X_k \cap V_\af$. 
By \eqref{eq:Chirka_submean}, we get 
\begin{align*}
    |f^m_{k,i}(x)|^2 
    \leq \frac{c_n}{\mult(X_k, x) d^{2n}} \int_{\BB_d(x) \cap X_k} |f^m_{k,i,\af}|^2 (\ddc|z|^2)^n. 
\end{align*}
Note that for all $x \in X_k$, $\mult(X_k, x)$ is always a positive non-zero integer. 
Recall that by definition, $\int_{X_k} |s^m_{k,i}|_{h^m_k}^2 e^{-m u_k} \om_k^n = 1$. 
We thus have 
\[
    |s^m_{k,i}(x)|_{h_k^m}^2 
    = |f^m_{k,i,\af}(x)|^2 e^{-m g_{\af}(x)} 
    \leq \frac{C_\om^n e^{m \osc_{U_\af} g_\af} c_n}{d^{2n}} \int_{\BB_d(x) \cap X_k} |s^m_{k,i}|_{h_k^m}^2 \om_k^n
    \leq \frac{C_\om^n c_n}{d^{2n}} e^{m \vep/2}.
\]
Then
\[
    u^m_k = \frac{1}{m} \log\lt(\sum_{i=1}^{N_{m,k}} |s^m_{k,i}|_{h^m_k}^2\rt) 
    \leq \frac{\vep}{2} + \frac{\log N_{m,k}}{m} + \frac{n (\log(C_\om/d^2))+\log c_n}{m}. 
\]
Since $\lim_{k\to +\infty}N_{m,k}\leq N_{m,0}$ and $N_{m,0} = \dim H^0(X_0, mL_0) = O(m^n)$, we obtain $u^m_k < \vep$ for all $m, k$ sufficiently large.

\smallskip
\noindent{\bf Step 2: controlling the algebraic approximations from below on the smooth locus.}
Recall that $\om_k + \ddc_k u_k \geq \dt \om_k$ for all $k$.
Fix $\CU \Subset \CX \setminus \CZ$.
From Lemma~\ref{lem:OT_smooth_loc}, we have constants $m_0(\dt, \CU, \CX, \om)$ and $C_{\OT}(\dt, \CU, \CX, \om)$ such that for all $k$ large enough, for all $x \in \CU \cap X_k$, $m \geq m_0$ and $0 \neq v \in m {L_k}_{|x}$, there exists a section $\sm^m \in H^0(X_k, m L_k)$ such that $\sm^m(x) = v$ and
\[
    \int_{X_k} |\sm^m|_{h^m_k}^2 e^{-m u_k} \om_k^n
    \leq C_{\OT} |\sm^m(x)|_{h^m_k}^2 e^{-m u_k(x)}.
\]
Note that the Bergman kernel $\sum_{i=1}^{N_{m,k}} |s^m_{k,i}|_{h_k^m}^2$ at $x \in X_k$ is the peak section, i.e. 
\[
    \sum_{i=1}^{N_{m,k}} |s^m_{k,i}(x)|_{h_k^m}^2
    = \sup \set{|s^m_k(x)|^2_{h_k^m}}{s^m_k \in H^0(X_k,m L_k), \, \int_X |s^m_k|_{h_k^m}^2  e^{-mu_k} \om_k^n = 1}.
\]
Therefore, for $k$ sufficiently large, 
\begin{equation}\label{eq:alg_app_lower_bdd}
    u_k(x) \leq \frac{1}{m} \log\lt(\frac{|\sm^m(x)|_{h^m_k}^2}{\int_{X_k} |\sm^m|_{h^m_k}^2 e^{-m u_k} \om_k^n}\rt) + \frac{\log C_{\OT}}{m}
    \leq u^m_k(x) + \frac{\log C_{\OT}}{m}
\end{equation}
for all $x_k \in \CU \cap X_k$.

\smallskip
\noindent{\bf Step 3: convergence and Monge--Amp\`ere measures.}
In this part, we shall prove that for all $m$ large, the sequence $(u^m_k)_k$ $\CC^0$-subconverges (up to subsequence) to a function $u^m \in \PSH(X_0,\om_0)$ in the family sense, and the unbounded locus of $u^m$ is a subset of $\CZ$. 

\smallskip
Fix $U_0 \Subset X_0^\reg$ and $\CU \Subset \CX \setminus \CZ$ so that $\CU \cap X_0 = U_0$. 
Choose $(B_\af, F_\af, \CB_\af)$ a locally trivial finite cover of $\CU$ so that $\CL$ is trivial on each $\CB_\af$.
Let $e_\af^m$ be a local trivialization of $m \CL$ on $\CB_\af$.
Recall that $\CB_\af \cap X_k$ can be identified with $B_\af$ via $F_{\af,k}: B_\af \to \CB_\af \cap X_k$.
Write $s^m_{k,i} = f^m_{k,i,\af} \cdot {e^m_\af}_{|\CB_\af \cap X_k}$ where $f^m_{k,i,\af} \in \CO(\CB_\af \cap X_k) \simeq \CO(B_\af)$. Observe that for any $m$ fixed $N_{m,k}$ is constant for $k\gg 1$ large. Following the Cauchy estimate as in Step 1, for each $m, \af, i\leq N_m$ fixed, $(|f^m_{k, i, \af}|)_k$ are uniformly bounded in $\CC^0(B_\af)$.
By the Montel theorem, one can extract a subsequence of $(f^m_{k,i,\af})_k$ that converges locally uniformly to a function $f^m_{i,\af} \in \CO(B_\af)$.
This induces a local section $s^m_{i,\af} = f^m_{i,\af} {e^m_{\af}}_{|B_\af} \in H^0(B_\af, m L_0)$.
On another ball $B_{\af'}$ with $B_\af \cap B_{\af'} \neq \emptyset$, up to extracting and relabeling, one obtains another local section $s^m_{i,\af'} \in H^0(B_{\af'}, m L_0)$. 
Since the convergence is locally uniform, $s^m_{i,\af'} = s^m_{i,\af}$ on any compact subset of $B_\af \cap B_{\af'}$ and thus, $s^m_{i,\af'} = s^m_{i,\af}$ on $B_\af \cap B_{\af'}$.
Inductively, we find a section $s^m_i \in H^0(U_0,mL_0)$.
Enlarging $U_0$ towards $X_0^\reg$, we get a section $s^m_i \in H^0(X_0^\reg, mL_0)$. 
By normality of $X_0$, $s^m_i$ extends to a section in $H^0(X_0, mL_0)$ and we still denote it by $s^m_i$.
Therefore, $(\sum_{i=1}^{N_{m,k}} |s^m_{k,i}|_{h^m_k}^2)_k$ $\CC^0$-converges to $\sum_{i=1}^{N_m} |s^m_i|_{h^m_0}^2$ in the family sense. 
From \eqref{eq:alg_app_upper_bdd} and \eqref{eq:alg_app_lower_bdd}, for all $m$ sufficiently large $(u^m_k)_k$ $\CC^0$-converges in the family sense to
\[
    u^m := \frac{1}{m} \log\lt(\sum_{i=1}^{N_m} |s^m_i|_{h^m_0}^2\rt),
\] 
which is not identically $-\infty$.

\smallskip
Again, fix $\CU \Subset \CX \setminus \CZ$ and $(B_\af, F_\af, \CB_\af)$ a locally trivial finite cover of $\CU$.
Recall that the Monge--Amp\`ere operator is continuous along a uniformly convergent sequence of continuous psh functions (cf. \cite[Ch.~III, Cor.~3.6]{Demailly_agbook}).
Hence, for all $j \in\{0, 1, \cdots, n\}$, we have 
\[
    F_{\af,k}^\ast (u^m_k (\ddc (g_{\af, k} + u^m_k))^j \w (\ddc g_{\af, k})^{n-j})
    \rightharpoonup u^m (\ddc (g_{\af, 0} + u^m))^j \w (\ddc g_{\af, 0})^{n-j}
\]
as $k \to +\infty$, where $g_\af$ is a local potential of $\om$ on $\CB_\af$ and $g_{\af, k} = {g_\af}_{|X_k}$.
Let $\chi$ be a smooth function support in $\CB_\af$. 
We obtain 
\[
    \lim_{k \to +\infty} \int_{B_\af} \chi_{|X_0} F^\ast_{\af,k} (u^m_k (\om_k + \ddc u^m_k)^{j} \w \om_k^{n-j})
    = \int_{B_\af} \chi_{|X_0} (u^m (\om_0 + \ddc u^m)^{j} \w \om_0^{n-j}).
\]
By \eqref{eq:alg_app_upper_bdd} and \eqref{eq:alg_app_lower_bdd}, for every $\vep > 0$, we have for all $m, k$ large, we have
\[
    u^m_k < \vep \ \,\text{on}\ \, X_k,
    \quad\text{and}\quad
    -C_{LB} \leq u_k < u^m_k + \vep \ \,\text{on}\ \, \CU \cap X_k.
\]
Therefore, 
{\small
\[
    \abs{\int_{B_\af} (\chi_{|X_k} \circ F_{\af,k} - \chi_{|X_0}) F^\ast_{\af,k} (u^m_k (\om_k + \ddc u^m_k)^{j} \w \om_k^{n-j})}
    \leq \norm{\chi_{|X_k} \circ F_{\af,k} - \chi_{|X_0}}_{L^\infty} (C_{LB} + \vep) V
    \xrightarrow[k \to +\infty]{} 0
\]
}%
and thus,
\begin{equation}\label{eq:MA_conv_smooth_locus}
    \lim_{k \to +\infty} \int_{\CB_\af \cap X_k} \chi_{|X_k} u^m_k (\om_k + \ddc u^m_k)^j \w \om_k^{n-j} 
    = \int_{B_\af} \chi_{|X_0} u^m (\om_0 + \ddc u^m)^j \w \om_0^{n-j}.
\end{equation}

\smallskip
\noindent{\bf Step 4: weak limit of $u^m$.}
For $x \in X_k$, we denote by $s^m_{k,x}$ a peak section of $m L_k$ at $x$ with respect to the $L^2$ inner product $G_{m,k}$. 
Fix $\vep > 0$, an open set $U_0 \Subset X_0^\reg$, and an open set $\CU \Subset \CX \setminus \CZ$ such that $\CU \cap X_0 = U_0$.
Let $(B_\af, F_\af, \CB_\af)_\af$ be a locally trivial finite cover of $\CU$. 
Again, let $g_\af$ be a local potential of $\om$ on $\CB_\af$.
One can assume that $\osc_{\CB_\af} g_\af \leq \vep$ by shrinking and adding balls $B_\af$. 
For any $x \in \frac{1}{2} B_\af$, we set $x_{\af,k} := F_{\af,k}(x)$.
Let $f^m_{k,x_{\af,k}}$ be the holomorphic function representing $s^m_{k,x_{\af,k}}$ under the fixed trivialization of $\CL$ on $\CB_\af$.
Take an arbitrary $r \in (0, r_\af/2)$.
Write $v_{2n}$ for the volume of the unit ball in $\BC^N$. 
Pick a constant $C_\CU > 0$ so that for all $\af$ and $t$ close to $0$, $\dd V_{\BC^n} \leq C_{\CU} F_{\af,t}^\ast \om_t^n$. 
By Cauchy's estimate, 
\begin{align*}
    |F_{\af,k}^\ast f^m_{k,x_{\af,k}} (x)|^2 e^{-m F_{\af,k}^\ast g_{\af,k} (x)}
    &\leq \frac{e^{m \osc_{B_\af} F_{\af,k}^\ast g_{\af,k}}}{v_{2n} r^{2n}} \int_{B(x,r)} |F_{\af,k}^\ast f^m_{k,x_{\af,k}}|^2 e^{-m F_{\af,k}^\ast g_{\af,k}} \dd V_{\BC^n}\\
    &\leq \frac{C_\CU e^{m \vep}}{v_{2n} r^{2n}} 
    \int_{B(x,r)} |F_{\af,k}^\ast f^m_{k,x_{\af,k}}|^2 e^{-m F_{\af,k}^\ast g_{\af,k}} F_{\af,k}^\ast \om_k^n
\end{align*}
and thus,
\begin{align*}
    |s^m_{k,x} (x_{\af,k})|_{h^m_k}^2 
    &= |f^m_{k,x_{\af,k}}(x_{\af,k})|^2 e^{-m g_{\af,k} (x_{\af,k})}\\
    &\leq \frac{C_\CU e^{m \vep}}{v_{2n} r^{2n}} 
    \int_{F_{\af,k}(B(x,r))} |s^m_{k,x_{\af,k}}|_{h^m_k}^2 \om_k^n\\
    &\leq \frac{C_\CU e^{m \lt(\vep + \sup_{F_{\af,k}(B(x,r))} u_k\rt)}}{v_{2n} r^{2n}} 
    \int_{F_{\af,k}(B(x,r))} |s^m_{k,x_{\af,k}}|_{h^m_k}^2 e^{-m u_k} \om_k^n\\
    &\leq  \frac{C_\CU e^{m \lt(\vep + \sup_{F_{\af,k}(B(x,r))} u_k\rt)}}{v_{2n} r^{2n}}.
\end{align*}
We have 
\[
    u^m_k(F_{\af,k}(x)) = \frac{1}{m} \log \lt(|s^m_{k,x_{\af,k}}(x_{\af,k})|_{h^m_k}^2\rt)
    \leq \sup_{F_{\af,k}(B(x,r))} u_k + \vep + \frac{\log (C_\CU/v_{2n}) - 2n \log(r)}{m}
\]
for all $x \in \frac{1}{2} B_{\af}$. 
Since $(F_{\af,k}^\ast u_k)_k$ converges to $u_0$ on $B_\af$, by Hartogs lemma, the above inequality implies that 
\[
    u^m(x) \leq \sup_{B(x,r)} u_0 + \vep + \frac{\log (C_\CU/v_{2n}) - 2n \log(r)}{m}.
\]
for all $x \in \frac{1}{2} B_\af$. 

\smallskip
After extracting a subsequence, we may assume that $(u^m)_m$ converges to a function $\hu \in \PSH(X_0,\om_0)$. 
Taking $m \to +\infty$, we have $\hu(x) \leq \sup_{B(x,r)} u_0 + \vep$.
Letting $r \to 0^+$ and then $\vep \to 0^+$, one derives $\hu(x) \leq u_0(x)$ for all $x \in U_0$.
Then enlarging $U_0$ towards $X_0^\reg$, we have 
\begin{equation}\label{eq:alg_app_weak_limit_upper_bdd}
    \hu \leq u_0    
\end{equation}
on $X_0$.

\smallskip
\noindent{\bf Step 5: upper semi-continuity of the energies.}
Recall that $-C_{LB} \leq u_k \leq 0$ for all $k$.
Set 
\[
    u^m_{k,C} := \max\{u_k^m, -C_{LB}\} 
    = \frac{1}{m} \log\lt(\max\lt\{\sum_{i=1}^{N_{m,k}} |s^m_{k,i}|_{h^m_k}^2 , e^{-mC_{LB}}\rt\}\rt),
\]
\[
    u^m_C := \max\{u^m, -C_{LB}\} 
    = \frac{1}{m} \log\lt(\max\lt\{\sum_{i=1}^{N_m} |s^m_i|_{h^m_0}^2 , e^{-mC_{LB}}\rt\}\rt)
\]
truncations of $u^m_k$ and $u^m$ from below.
One can observe that $(u^m_{k,C})_k$ $\CC^0$-converges to $u^m_C$ in the family sense.

\smallskip
Fix an arbitrary $\vep_1 > 0$. 
From Lemma~\ref{lem:small_cap_near_sing}, we find open subsets $\CW_{\vep_1/2} \Subset \CW_{\vep_1}$ which contain $\CZ$ where $\CW_{\vep_1/2}$ (resp. $\CW_{\vep_1}$) has capacity less than $\vep_1/2$ (resp. $\vep_1$).  
Take 
\[
    \CU_{\vep_1} := \CX \setminus \overline{\CW_{\vep_1/2}}.
\]
We fix $\eta$ a cutoff with support in $\CU_{\vep_1}$ and $\eta \equiv 1 $ on $\CX \setminus \CW_{\vep_1}$.

\smallskip
Then take an arbitrary $\vep_2 > 0$. 
From \eqref{eq:alg_app_lower_bdd} and \eqref{eq:alg_app_upper_bdd}, we have
\[
    u_k < u^m_k + \vep_2 \ \,\text{on}\ \, \CU_{\vep_1} \cap X_k,
    \quad\text{and}\quad
    u^m_k < \vep_2 \ \,\text{on}\ \, X_k
\]
for $m$ and $k$ sufficiently large.
By the monotonicity of the Monge--Amp\`ere energy, we get
\[
    \E_k(u_k)  
    \leq \E_k (\max \{u_k, u^m_{k,C} + \vep_2\} - 2\vep_2) + 2\vep_2.
\]

\smallskip
Let $(B_\af, F_\af, \CB_\af)_\af$ be a locally trivial finite cover of $\CU_{\vep_1}$.
Choose cutoff functions $(\chi_\af)_\af$ such that $\chi_\af$ is supported in $\CB_\af$ and $\sum_\af \chi_\af \equiv 1$ on $\CU_{\vep_1}$, up to shrinking $\BD$. 
Since $\max\{u_k, u^m_{k,C} + \vep_2\} - 2 \vep_2$ is negative on $X_k$ and $u_k < u^m_{k,C} + \vep_2$ on $\CU_{\vep_1} \cap X_k$, we get
{\small
\begin{align*}
    \E_k (\max \{u_{k,\dt}, u^m_{k,C} + \vep_2\} - 2 \vep_2)
    &\leq \frac{1}{(n+1) V} \sum_{j=0}^n \int_{X_k \cap \CU_{\vep_1}} \eta_{|X_k} \lt(u^m_{k,C} - \vep_2 \rt) (\om_k + \ddc u^m_{k,C})^j \w \om_k^{n-j}\\
    &= \frac{1}{(n+1) V} \sum_{j=0}^n \sum_\af \int_{B_{\af,k}} (\eta \chi_{\af})_{|X_k} \lt(u^m_{k,C} - \vep_2 \rt) (\om_k + \ddc u^m_{k,C})^j \w \om_k^{n-j}.
\end{align*}
}%
Hence, 
\[
    \E_k(u_k)
    \leq \frac{1}{(n+1) V} \sum_{j=0}^n \sum_\af \int_{B_{\af,k}} (\eta \chi_{\af})_{|X_k} \lt(u^m_{k,C} - \vep_2 \rt) (\om_k + \ddc u^m_{k,C})^j \w \om_k^{n-j} + 2 \vep_2.
\]
Letting $k \to +\infty$, by \eqref{eq:MA_conv_smooth_locus}, we obtain
\begin{align*}
    \limsup_{k \to + \infty} \E_k(u_k)
    &\leq \frac{1}{(n+1) V} \sum_{j=0}^n \sum_\af \int_{B_{\af}} (\eta \chi_{\af})_{|X_0} \lt(u^m_{C} - \vep_2\rt) (\om_0 + \ddc u^m_{C})^j \w \om_0^{n-j} + 2 \vep_2 \\
    &= \frac{1}{(n+1) V} \sum_{j=0}^n \int_{X_0 \cap \CU_{\vep_1}} \eta_{|X_0} \lt(u^m_{C} - \vep_2 \rt) (\om_0 + \ddc u^m_{C})^j \w \om_0^{n-j} + 2 \vep_2 \\
    & \leq \E_0(u^m_{C}) + \vep_2 + \frac{\lt(C_{LB} + \vep_2\rt)}{(n+1) V} \sum_{j=0}^n \int_{X_0 \cap \CW_{\vep_1}} (\om_0 + \ddc_0 u^m_C)^j \w \om_0^{n-j}.
\end{align*}
Note that we have 
\[
    \frac{1}{2^n} \sum_{j=0}^n \binom{n}{j} \lt(\om_0 + \ddc \frac{v}{B}\rt)^j \w \om_0^{n-j} 
    = \lt(\om_0 + \ddc \frac{v}{2B}\rt)^n
\]
for all $B>1$ and for all $v \in \PSH(X_0,\om_0) \cap L^\infty(X_0)$.
Then one gets
\begin{align*}
    \int_{X_0 \cap \CW_{\vep_1}} (\om_0 + \ddc u_C^m)^j \w \om_0^{n-j}
    &= (C_{LB})^j \int_{X_0 \cap \CW_{\vep_1}} \lt(\frac{\om_0}{C_{LB}} + \ddc \lt[\frac{u_C^m - \vep_2}{C_{LB}}\rt]\rt)^j \w \om_0^{n-j}\\
    &\leq (C_{LB})^n \int_{X_0 \cap \CW_{\vep_1}} \lt(\om_0 + \ddc \lt[\frac{u_C^m - \vep_2}{C_{LB}}\rt]\rt)^j \w \om_0^{n-j}\\
    &\leq (2C_{LB})^n \int_{X_0 \cap \CW_{\vep_1}} \lt(\om_0 + \ddc \lt[\frac{u_C^m - \vep_2}{2C_{LB}}\rt]\rt)^n\\
    &\leq (2C_{LB})^n \CAP_{\om_0} (X_0 \cap \CW_{\vep_1}) 
    < (2C_{LB})^n \vep_1.
\end{align*}
Hence, 
\[
    \limsup_{k \to + \infty} \E_k(u_k)
    \leq \E_0(u^m_C) + \vep_2 + \frac{(2C_{LB})^n (C_{LB} + \vep_2)}{(n+1) V} \vep_1.
\]
By the upper semi-continuity of the Monge--Amp\`ere energy on $X_0$ and \eqref{eq:alg_app_weak_limit_upper_bdd}, we obtain
\begin{align*}
    \limsup_{k \to + \infty} \E_k(u_k)
    &\leq \limsup_{m \to +\infty} \E_0(u^m_C) + \vep_2 + \frac{(2C_{LB})^n (C_{LB} + \vep_2)}{(n+1) V} \vep_1\\
    &\leq \E_0(\max\{\hu, -C_{LB}\}) + \vep_2 + \frac{(2C_{LB})^n (C_{LB} + \vep_2)}{(n+1) V} \vep_1\\
    &\leq \E_0(u_0) + \vep_2 + \frac{(2C_{LB})^n (C_{LB} + \vep_2)}{(n+1) V} \vep_1.
\end{align*}
Finally, letting $\vep_2 \to 0^+$ and $\vep_1 \to 0^+$, we obtain the desired estimate
\[
    \limsup_{k \to +\infty} \E_k(u_k) \leq \E_0(u_0).
\]
This completes the proof of Proposition~\ref{prop:usc_energy_family}.

\section{A Demailly--Koll\'ar result in families}\label{sec:DK_fami}
This section is devoted to the proof of Proposition~\ref{bigprop:DK}.
We first start by discussing Lelong numbers of functions with full Monge--Amp\`ere mass on a singular space. 

\subsection{Lelong number of functions in the finite energy class}
We recall here two different definitions of Lelong numbers on singular complex spaces and how they compare.
We shall prove that functions with full Monge--Amp\`ere mass have zero Lelong numbers in both senses. 

\smallskip
Let $X$ be an $n$-dimensional, normal, complex space.
Fix a point $x \in X$ and local generators $(f_i)_{i \in I}$ of the maximal ideal $\mathfrak{m}_{X,x}$ of $\CO_{X,x}$.
Set $\psi := \sum_{i \in I} |f_i|^2$.
Let $u$ be a psh function defined near $x$.
Recall from \cite[Appx. A]{BBEGZ_2019} that the {\it slope} of $u$ at $x$ is defined by
\[
    s(u,x) := \sup \set{s \geq 0}{u \leq s \log \psi + O(1)}.
\]
In \cite[D\'ef.~3]{Demailly_1982}, Demailly introduced another way of measuring the singularity of $u$ at $x$ by considering
\[
    \nu(u,x) := \lim_{r \to 0} {}^\downarrow \int_{\{\psi < r\}} (\ddc u) \w \lt(\ddc \log \psi \rt)^{n-1}.
\] 
We call it the {\it Demailly--Lelong number} of $u$ at $x$. 
These quantities are both independent of the choice of $(f_i)_i$.

\begin{rmk}
Fix a point $x \in X$.
Take a local embedding $X \xhookrightarrow{\loc.}  \BC^N$ near $x$ and send $x$ to the origin $0 \in \BC^N$. 
Note that $\mathfrak{m}_{X,x} = \mathfrak{m}_{\BC^N,0} / \mathscr{I}_{X,0}$ where $\mathscr{I}_X$ is the ideal sheaf induced by $X$.
From Demailly's comparison theorem \cite[Thm.~4]{Demailly_1982}, one can check that the Demailly--Lelong number can also be expressed as 
\[
    \nu(u,x) = \lim_{r \to 0} {}^\downarrow \int_{\BB_r(0)} \ddc u \w (\ddc \log |z|^2)^{n-1} \w [X], 
\]
where $\BB_r(0)$ is a ball in $\BC^N$ with radius $r$ centered at $0$.
Similarly, one also has
\[
    s(u,x) = \sup \set{s \geq 0}{u_{|X} \leq s \lt(\log |z|^2\rt)_{|X} + O(1)}.
\]
It follows from \cite[Lem.~2.46]{GZbook} that $s(u,x) = \nu(u,x)$ if $x$ is a smooth point. 
However, it is no longer the case in singular settings. 
Precisely, one has the following inequality (cf. \cite[Rmk.~A.5]{BBEGZ_2019}) 
\[
    \nu(u,x) \geq \mult(X,x) s(u,x).
\]
It is conjectured in {\it loc. cit.} that there is a constant $C \geq 1$ independent of $u$ such that 
\begin{equation}\label{eq:conj_comparison_Lelong_num}
    \nu(u,x) \leq C s(u,x).
\end{equation}
\end{rmk}

Recall that $u \in \CE(X,\om)$ is a function in $\PSH(X,\om)$ whose non-pluripolar Monge--Amp\`ere 
\[
    \npp{(\om + \ddc u)^n}
    := {\lim_{j \to+ \infty}}^{\uparrow} 
    \1_{\{u>-j\}} (\om + \ddc \max\{u, -j\})^n 
\]
has full Monge--Amp\`ere mass.
On a singular space, from \cite[Thm~1.1]{BBEGZ_2019}, the slope of $u$ is identically zero. 
We prove that for any function $u \in \CE(X,\om)$, the Demailly--Lelong number is identically zero as well. 

\begin{prop}\label{prop:zero_lelong_num_of_E1_fcn}
Let $(X,\om)$ be an $n$-dimensional, normal, compact K\"ahler space. 
If $u \in \CE(X,\om)$, then $\nu(u,x) = 0$ for every $x \in X$.
\end{prop}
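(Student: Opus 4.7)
The plan is to reduce to the smooth-manifold version of the statement via a resolution of singularities, exploiting the classical fact that on a compact K\"ahler manifold, an $\om$-psh function with full Monge--Amp\`ere mass has vanishing Lelong numbers.

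For a regular point $x \in X^{\reg}$, the Demailly--Lelong number $\nu(u,x)$ coincides with the classical Lelong number of $u$ computed in any local holomorphic chart around $x$. Since $X^{\reg}$ is an open K\"ahler manifold and $u \in \CE(X,\om)$ restricts on any relatively compact chart in $X^{\reg}$ to an $\om$-psh function with full mass, the classical result of Guedj--Zeriahi (see \cite{GZbook}) yields $\nu(u,x) = 0$ at once. This also gives an alternative shortcut since the work of BBEGZ already established $s(u,x) = 0$ and on the smooth locus slope equals the Lelong number.

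For $x \in X^{\sing}$, I would take a log resolution $\pi: Y \to X$ of the pair $(X, \fm_{X,x})$ that is an isomorphism over $X^{\reg}$, so that $\pi^{-1}\fm_{X,x} \cdot \CO_Y = \CO_Y(-D)$ for some effective $\pi$-exceptional divisor $D = \sum_i a_i E_i$ supported over $x$. Setting $\tilde u := u \circ \pi$, the key technical step is to verify that $\tilde u \in \CE(Y, \pi^\ast \om + \vep \om_Y)$ for any auxiliary K\"ahler form $\om_Y$ on $Y$ and every $\vep > 0$. This transfer of full mass follows from the compatibility of the non-pluripolar Monge--Amp\`ere operator under birational pullback (so that $\pi_\ast \npp{(\pi^\ast \om + \ddc \tilde u)^n} = \npp{(\om + \ddc u)^n}$), together with the hypothesis $u \in \CE(X,\om)$ and a monotone convergence argument on the truncations $\max\{\tilde u, -j\}$.

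On the smooth manifold $Y$, the classical result applied to $\tilde u$ then yields $\nu(\tilde u, y) = 0$ for every $y \in Y$; in particular the generic Lelong number of $\tilde u$ along each $E_i$ vanishes. Combining this with the pullback formula
\[
    \nu(u, x) = \sum_i a_i \, \nu(\tilde u, E_i),
\]
which relates Demailly's Lelong number at the singular point to divisorial Lelong numbers on the resolution, one concludes $\nu(u,x)=0$. The main obstacle is to justify this pullback formula rigorously: it should be obtained by a Stokes-type computation on $Y$ comparing the weight $\log(\psi \circ \pi)$ with the divisorial weight associated with $D$, combined with Demailly's comparison theorem for generalized Lelong numbers. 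A secondary, more routine, point is to check that no Monge--Amp\`ere mass of $\tilde u$ leaks onto the exceptional locus in the passage to $Y$, where $\pi^\ast\om$ fails to be strictly positive.
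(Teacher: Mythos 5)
Your reduction to a resolution is the right instinct (and is what the paper does), and the transfer of full mass to the resolution is fine in spirit: the paper checks $p^\ast u \in \CE(\wX, p^\ast\om)$ and then invokes \cite[Thm.~1.1~(ii)]{DDL_2018} to get $p^\ast u \in \CE(\wX,\wom)$ for a K\"ahler $\wom \geq p^\ast\om$. But the pivot of your argument, the ``pullback formula''
\[
    \nu(u,x) \;=\; \sum_i a_i\,\nu(\tilde u, E_i),
\]
is not just unproven — it is false as stated. It is not even independent of the chosen log resolution: take $X=\BC^2$, $x=0$, $u=\log|z|^2$ (so $\nu(u,0)=1$), and for $\pi$ the composition of the blow-up at $0$ followed by the blow-up of a point of the first exceptional curve. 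This is a log resolution of $(X,\fm_{X,x})$ with $\pi^{-1}\fm_{X,x}\cdot\CO_Y=\CO_Y(-E_1'-E_2)$, i.e. $a_1=a_2=1$, while the generic Lelong number of $\pi^\ast u$ along each of $E_1'$, $E_2$ equals $1$; your formula would give $\nu(u,0)=2$. The underlying difficulty is that $\nu(u,x)$ involves $(\ddc\log\psi)^{n-1}$, whose pullback is $\bigl(\sum_l a_l[D_l]+\ta\bigr)^{n-1}$ with $\ta$ smooth semi-positive; the self-intersection terms of the divisorial part are exactly the ill-defined objects, and there is no clean identity expressing $\nu(u,x)$ through divisorial Lelong numbers alone. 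Note also that vanishing of all Lelong numbers of $\tilde u$ on $Y$ is by itself weaker than what is ultimately needed: one must rule out that a trace-type measure of $\wom+\ddc\,p^\ast u$ charges the whole fibre $p^{-1}(x)$, which may contain divisorial components.

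For comparison, the paper never passes through divisorial Lelong numbers. It regularizes the weight, writing the approximants $\int \chi\, \ddc v_j \w \bigwedge_k \ddc\log(\psi+\vep_k)$ with $v_j$ smooth psh decreasing to $\rho+u$, pulls back to $\wX$, and uses that $p^\ast v_j$ is \emph{constant along each component $D_l\subset p^{-1}(x)$} to discard every term containing a factor $[D_l]$; only an inequality survives,
\[
    \nu(u,x) \;\leq\; \int_{p^{-1}(x)} \bigl(\wom + \ddc\, p^\ast u\bigr)\w \wom^{n-1},
\]
and the right-hand side vanishes because mixed Monge--Amp\`ere measures of full-mass potentials put no mass on pluripolar sets (\cite[Cor.~2.15]{BEGZ_2010}), $p^{-1}(x)$ being analytic hence pluripolar. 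If you want to salvage your outline, you should replace the equality by this kind of one-sided estimate and invoke the non-charging property of $\CE$, rather than zero Lelong numbers, at the last step; as written, the proposal has a genuine gap at its central identity.
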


\begin{proof}
Fix a point $x \in X$.
Note that $\nu(\rho,x) = 0$ for any smooth psh function $\rho$ defined near $x$. 
Let $\CI$ be the ideal sheaf of the point $x$. 
By Hironaka's theorem, one can find a log-resolution 
\[
    p: (\wX, \CO_{\wX}(-D)) \to (X,\CI)
\] 
so that $\wX$ is smooth, $p^{-1} \CI = \CO_{\wX}(-D)$, and $D = \sum_l a_l D_l$ is effective where $(D_l)_l$ are prime divisors. 
Let $\rho$ be a smooth potential of $\om$ near $x$.
The function $v := \rho + u$ is psh near $x$.
By Forn{\ae}ss--Narasimhan~\cite[Thm~5.5]{FN_1980}, there is a sequence of smooth psh functions $(v_j)_j$ decreasing towards $v$.
Recall that $\psi := \sum_{i \in I} |f_i|^2$ where $(f_i)_{i \in I}$ are local generators of the maximal ideal $\mathfrak{m}_{X,x}$ of $\CO_{X,x}$. 
Fix $r>0$ and a cutoff function $\chi$ which is identically $1$ on $\{\psi < r\}$ and has support in $\{\psi < 2r\}$.
Then we consider the following approximation of $\nu(u,x)$:
\[
    L_{j, \vep_1, \cdots, \vep_{n-1}} 
    = \int_{\{\psi<2r\}} \chi \ddc v_j  \w \ddc \log (\psi + \vep_1) \w \cdots \w \ddc \log (\psi + \vep_{n-1}),
\]
for $(\vep_k)_{1 \leq k \leq n-1} \in (0,1]^{n-1}$. 
By the continuity of the complex Monge--Amp\`ere operator along monotone sequences, we have 
\[
    \nu(u,x) \leq \int_{\{\psi<2r\}} \chi \ddc v  \w (\ddc \log \psi)^{n-1}
    = \lim_{j \to +\infty} \lim_{\vep_1 \to 0} \cdots \lim_{\vep_{n-1} \to 0} L_{j, \vep_1, \cdots, \vep_{n-1}}.
\]

\smallskip
Pulling back all data to $\wX$, we obtain  
\[
    L_{j, \vep_1, \cdots, \vep_{n-1}} = \int_{p^{-1}(\{\psi < 2r\})} (\chi \circ p) p^\ast \ddc v_j \w p^\ast \ddc \log(\psi + \vep_1) \w \cdots \w p^\ast \ddc \log (\psi + \vep_{n-1}).
\]
Note that
\[
    p^\ast \ddc \log \psi = \sum_l a_l [D_l] + \ta
\]
where $\ta$ is a smooth semi-positive $(1,1)$-form defined near $p^{-1}(x)$.
Choose a K\"ahler metric $\wom$ on $\wX$ so that $p^\ast \om \leq \wom$ on $\wX$ and $\ta \leq \wom$ near $p^{-1}(x)$.
Taking $\vep_{n-1} \to 0$, one can derive that $L_{j, \vep_1, \cdots, \vep_{n-1}}$ converges to 
\begin{align*}
    L_{j, \vep_1, \cdots, \vep_{n-2}} 
    &= \int_{p^{-1}(\{\psi < 2r\})} (\chi \circ p) p^\ast \ddc v_j \w \bigwedge_{k=1}^{n-2} p^\ast \ddc \log (\psi + \vep_k) \w \lt(\sum_l a_l [D_l] + \ta\rt)\\
    &= \int_{p^{-1}(\{\psi < 2r\})} (\chi \circ p) p^\ast \ddc v_j \w \bigwedge_{k=1}^{n-2} p^\ast \ddc \log (\psi + \vep_k) \w \ta;
\end{align*}
here the second equality comes from the constancy of $p^\ast v_j$ along each $D_l$.
Inductively, one can deduce
\begin{align*}
    \lim_{j \to +\infty} \lim_{\vep_1 \to 0} \cdots \lim_{\vep_{n-1} \to 0} 
    L_{j, \vep_1, \cdots, \vep_{n-1}} 
    &= \int_{p^{-1}(\{\psi < 2r\})} (\chi \circ p) p^\ast (\om + \ddc u) \w \ta^n\\
    &\leq \int_{p^{-1}(\{\psi < 2r\})}  (\wom + \ddc p^\ast u) \w \wom^{n-1}.
\end{align*}
Letting $r \to 0$, we obtain
\[
    \nu(u,x) \leq \int_{p^{-1}(x)} (\wom + \ddc p^\ast u) \w \wom^{n-1}.
\]

\smallskip
One can easily check that $p^\ast u \in \CE(\wX, p^\ast \om)$. 
By \cite[Thm~1.1 (ii)]{DDL_2018}, we have 
\[
    \CE(\wX, p^\ast \om) \cap \PSH(\wX, \wom) \subset \CE(\wX, \wom),
\] 
and thus $p^\ast u \in \CE(\wX, \wom)$. 
Note that $(\wom + \ddc p^\ast u) \w \wom^{n-1}$ is a mixed Monge--Amp\`ere measure of $p^\ast u \in \CE(\wX, \wom)$ and $0 \in \CE(\wX, \wom)$, so it does not charge pluripolar sets (see e.g. \cite[Cor.~2.15]{BEGZ_2010}).
The set $p^{-1}(x)$ is analytic, so is pluripolar; hence 
\[
    \int_{p^{-1}(x)} (\wom + \ddc p^\ast u) \w \wom^{n-1} = 0
\] 
and $\nu(u,x) = 0$.
\end{proof}

\subsection{A Skoda--Zeriahi type estimate}
In this section, we establish the following Skoda--Zeriahi type estimate along a convergent sequence of $\CE$-functions in families:

\begin{lem}\label{lem:Skoda_for_E1_seq}
In Setting~\ref{sett:general_sett_normal_fibre}, if $(u_k)_k \subset \CE_{\fibre}(\CX, \om)$ converges to $u_0 \in \CE(X_0, \om_0)$, then for all $\gm > 0$, there is a constant $A_\gm > 0$ such that
\[
    \int_{X_k} e^{-\gm u_k} \om_k^n \leq A_\gm.
\]
\end{lem}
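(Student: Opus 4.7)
The plan is to combine the uniform Skoda--Zeriahi estimate in families (Theorem~\ref{thm:SL_and_Skoda_in_family}), which immediately yields the conclusion for small $\gm$, with the zero-Demailly--Lelong-number property of $u_0\in\CE(X_0,\om_0)$ (Proposition~\ref{prop:zero_lelong_num_of_E1_fcn}) and a semi-continuity-of-Lelong-numbers argument to bootstrap to arbitrary $\gm>0$.

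First I would normalize: by Proposition~\ref{prop:Hartogs_conv_subseq_family}(ii), the suprema $\sup_{X_k} u_k$ are uniformly bounded, so after subtracting a constant I may assume $u_k\leq 0$ for all $k$ and $u_0\leq 0$. This shows immediately that for $\gm\leq\af$, with $\af$ the Skoda exponent of Theorem~\ref{thm:SL_and_Skoda_in_family}, the integral is bounded uniformly. To handle larger $\gm$, I split using Lemma~\ref{lem:small_cap_near_sing}: for each $\vep>0$, I take a neighborhood $\CW_\vep$ of the singular locus $\CZ$ of $\pi$ with $\CAP_{\om_t}(\CW_\vep\cap X_t)<\vep$ uniformly in $t$, and write
\[
    \int_{X_k} e^{-\gm u_k}\om_k^n = \int_{X_k\setminus\CW_\vep} e^{-\gm u_k}\om_k^n + \int_{X_k\cap\CW_\vep} e^{-\gm u_k}\om_k^n.
\]

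For the first (smooth) piece, I cover the compact set $\overline{\CX\setminus\CW_\vep}$ by finitely many locally trivial charts $(B_\af,F_\af,\CB_\af)$ as in Notation~\ref{nota}. On each fixed ball $B_\af\subset\BC^n$, the pullbacks $v_k^\af := u_k\circ F_{\af,k}$ are quasi-psh functions converging in $L^1(B_\af)$ to $u_0\circ F_{\af,0}=u_{0|B_\af}$. Proposition~\ref{prop:zero_lesong_num_of_E1_fcn} applied to $u_0$ shows that all classical Lelong numbers of $u_{0|B_\af}$ vanish (on $X_0^\reg$ the Demailly--Lelong number agrees with the classical one). By the classical upper semi-continuity of Lelong numbers under $L^1$-convergence on a fixed ball, for $k$ large one has $\nu(v_k^\af,x)\leq n/\gm$ uniformly for $x$ in $\overline{\frac12 B_\af}$; Demailly's uniform local integrability theorem then gives $\int_{\frac12 B_\af}e^{-\gm v_k^\af}\dd V\leq C_\af$ independently of $k$, and summing over the finite cover bounds the smooth piece.

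For the second (singular) piece, I would use H\"older's inequality together with the uniform Skoda estimate: if $\int_{X_k}e^{-p\gm u_k}\om_k^n$ is bounded for some $p>1$, then
\[
    \int_{X_k\cap\CW_\vep} e^{-\gm u_k}\om_k^n \leq \Big(\int_{X_k} e^{-p\gm u_k}\om_k^n\Big)^{1/p}\Vol_{\om_k}(\CW_\vep\cap X_k)^{1-1/p},
\]
which tends to $0$ with $\vep$. This closes the loop for $\gm<\af$ directly, and for larger $\gm$ one iterates, bootstrapping the bound on $\int e^{-p\gm u_k}\om_k^n$ via the same splitting on a finer cover, pushing the admissible exponent up by multiples of $\af$ at each stage. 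The main obstacle is exactly this singular piece: near $\CZ$ there is no local trivialization of the family and Demailly--Koll\'ar is unavailable directly on singular local charts, so the bootstrap must be arranged carefully using the capacity estimate of Lemma~\ref{lem:small_cap_near_sing}, or alternatively the whole argument lifted to a log resolution $p:\wCX\to\CX$ of the total space, on which the smooth-locus argument can be applied globally after checking that $p^\ast u_0$ retains vanishing Lelong numbers via Proposition~\ref{prop:zero_lelong_num_of_E1_fcn}.
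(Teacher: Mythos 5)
Your treatment of the piece away from the singular locus is sound in spirit (modulo the threshold, which should be $\nu(v_k^\af,x)<2/\gm$ rather than $n/\gm$, together with an effective uniform version of Skoda's integrability to get a constant independent of $k$), and it parallels what the paper does. The genuine gap is the singular piece. Your H\"older step bounds $\int_{X_k\cap\CW_\vep}e^{-\gm u_k}\om_k^n$ by $\big(\int_{X_k}e^{-p\gm u_k}\om_k^n\big)^{1/p}\Vol_{\om_k}(\CW_\vep\cap X_k)^{1-1/p}$ with $p>1$, i.e. it requires a uniform bound at the \emph{larger} exponent $p\gm$ — exactly the kind of bound you are trying to prove. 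Smallness of the volume or of the capacity of $\CW_\vep$ cannot compensate: H\"older only transfers exponential integrability downward in the exponent, never upward, so the proposed bootstrap is circular and never gets past $\gm\leq\af$ near $\CZ$ (where $\af$ is the uniform Skoda exponent of Theorem~\ref{thm:SL_and_Skoda_in_family}). The alternative you sketch, lifting to a log resolution of $\CX$, does not repair this as stated: resolving the total space does not make $\pi$ submersive, so near the critical locus there are still no locally trivial charts, the $L^1$-convergence in families gives no information on $p^\ast u_k$ there, and the reference form $p^\ast\om$ degenerates along the exceptional divisor.

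The paper's proof handles the singular locus head-on instead of cutting it out. It covers all of $\CX$ (including $\CZ$) by local embeddings $\BB\subset\BC^N$, writes $\om=\ddc g$ there, and regularizes $g+u_k$ on $\BB\cap X_k$ by Forn{\ae}ss--Narasimhan. It then controls the Demailly--Lelong-type masses $\nu(v_k,x,r)$, defined with the pluricomplex Green function $G_x$ of the ambient ball and the current $[X_k]$, proving $\limsup_k\nu(v_k,x_k,r)\leq\nu(g+u_0,x_0,r)$ from the convergence $\ddc v_k\w[X_k]\rightharpoonup\ddc(g+u_0)\w[X_0]$ on the total space (the argument of Proposition~\ref{prop:hartogs_currents}, which is valid across $\CZ$). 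Combined with Proposition~\ref{prop:zero_lelong_num_of_E1_fcn} (vanishing Demailly--Lelong numbers for $u_0\in\CE$) and Dini's lemma, this yields masses $\nu(v_k,x,r_0)$ uniformly small at a uniform scale $r_0$, for all $x\in\overline{\BB}_{1/2}$ (singular points of $X_k$ included) and all large $k$. The uniform exponential integrability then follows from the Skoda--Zeriahi Green-function representation on the analytic sets $\BB\cap X_k$, using $(\ddc G_x)^n\w[X_t]\geq\dt_x$, with constants uniform over the finite cover. This uniform control of local masses near $\CZ$ is the ingredient your proposal is missing.
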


\begin{proof}
The proof follows the same strategy as in \cite[Thm.~3.1]{Zeriahi_2001} and \cite[Thm.~4.1]{Pan_2023}.
An extra difficulty is that we need to establish arbitrarily small upper bounds on local projective masses along such a sequence. 

\smallskip
\noindent{\bf Step 0: reductions.}
Without loss of generality, we may assume that $(u_k)_k$ and $u_0$ are negative. 
Also, we assume that $\BB_{1/2} \subset \BB \subset \BC^N$ and the collection of $\BB_{1/2}$ cover $\CX$ up to shrinking $\BD$.
On each $\BB$, one can write $\om = \ddc g$ for some smooth strictly psh function $g$ defined in a neighborhood of $\overline{\BB}$. 
Since the collection of $\BB$ is finite, one can assume that $-C \leq g \leq 0$ on each $\BB$ for a uniform constant $C>0$.
By \cite[Thm.~5.5]{FN_1980}, the function $g + u_k$ is a decreasing limit of negative smooth psh functions $(v_{t_k, l})_{l \in \BN}$ on $\BB \cap X_k$.
By the monotone convergence theorem, for any $\vep > 0$ and each $k$ fixed, one can find $l_k$ such that 
\[
    \int_{\BB_{1/2} \cap X_k} e^{-\gm (g+u_k)} \om_k^n 
    \leq \int_{\BB_{1/2} \cap X_k} e^{-\gm (v_{t_k, l})} \om_k^n + \vep
    \quad\text{and}\quad
    \norm{(g+u_k) - v_{t_k,l}}_{L^1(\BB \cap X_k)} < \frac{\vep}
    {2^{k+1}} 
\]
for all $l \geq l_k$.
We may assume that $l_{k+1} > l_{k}$ for all $k \in \BN$.
Define $v_k := v_{t_k, l_k}$ which is a smooth psh function on $\BB \cap X_k$.
To prove Lemma~\ref{lem:Skoda_for_E1_seq}, it suffices to show that for each $\gm > 0$ and on each $\BB_{1/2}$, there exists a constant $A_\gm'$ such that for all $k \in \BN$, the following estimate holds 
\begin{equation*}%\label{eq:Skoda_smooth_reduction}
    \int_{\BB_{1/2} \cap X_k} e^{-\gm v_{k}} \om_k^n \leq A'_\gm.
\end{equation*}

\smallskip
\noindent{\bf Step 1: controlling Demailly--Lelong numbers.}
Let $\Phi_x$ denote the automorphism of $\BB$ sending $x$ to the origin and consider $G_x(z) := \log |\Phi_x(z)|$ the pluricomplex Green function of $\BB$.
Define 
\[
    \nu(v_k, x, r) := \int_{\overline{D(x,r)}} \ddc v_k \w (\ddc G_x)^{n-1} \w [X_k]
\]
where $D(x,r) := \set{\zt \in \BB}{|\Phi_x(\zt)| < r}$.
Arguing as in \cite[p.~65]{GZbook}, one can check that 
\[
    \nu(v_k, x, r) = \frac{1}{r^{2n-2}} \int_{\overline{D(x,r)}} \ddc v_k \w (\ddc |\Phi_x|^2)^{n-1} \w [X_k].
\]
We shall show an upper semi-continuity property: 
\begin{equation}\label{eq:Lelong_number_usc}
    \limsup_{k \to + \infty} \nu(v_{k}, x_{k}, r) 
    \leq \nu(g+u_0, x_0, r)
\end{equation}
for any sequence $x_{k} \in \overline{\BB}_{1/2}$ converging to $x_0 \in \overline{\BB}_{1/2}$.
Arguing as in Proposition~\ref{prop:hartogs_currents}, one shows that
\begin{equation}\label{eq:weak_conv_by_smooth}
    \ddc v_k \w [X_k] \rightharpoonup \ddc (g+u_0) \w [X_0]
\end{equation}
on $\BB$.
Fix $\vep > 0$.
For all $k$ large enough, we have $D(x_k, r)\subset D(x_0, r + \vep)$ and
\[
    (1-a_k) \ddc |\Phi_{x_0}|^2
    \leq \ddc |\Phi_{X_k}|^2 
    \leq (1+a_k) \ddc |\Phi_{x_0}|^2
\]
for some $a_k \searrow 0$.
Therefore, one has
\begin{align*}
    \nu(v_{k}, x_k, r) &= \frac{1}{r^{2n-2}} \int_{\overline{D(x_k,r)}} \ddc v_{k} \w (\ddc |\Phi_{x_k}|^2)^{n-1} \w [X_k]\\
    &\leq \frac{(1+a_k)^n}{r^{2n-2}} \int_{\overline{D(x_0,r+\vep)}} \ddc v_{k} \w (\ddc |\Phi_{x_0}|^2)^{n-1} \w [X_k].
\end{align*}
By \eqref{eq:weak_conv_by_smooth}, we have
\[
    \limsup_{k \to + \infty} \int_{\overline{D(x_0,r+\vep)}} \ddc v_{k} \w (\ddc |\Phi_{x_0}|^2)^{n-1} \w [X_k]
    \leq \int_{\overline{D(x_0,r+\vep)}} \ddc (g+u_{0}) \w (\ddc |\Phi_{x_0}|^2)^{n-1} \w [X_{0}].
\]
This implies that
\[
    \limsup_{k \to +\infty} \nu(v_{k}, x_k, r) 
    \leq \lt(\frac{r+\vep}{r}\rt)^{2n-2} \nu(g+u_0, x_0, r+\vep) 
    \xrightarrow[\vep \to 0^+]{} \nu(g+u_0, x_0, r).
\]

\smallskip
Note that $\nu(g+u_0, x, r)$ decreases to $\nu(g+u_0,x)$ as $r \to 0^+$ which is the Lelong number of $u_0$ at $x$.
From Proposition~\ref{prop:zero_lelong_num_of_E1_fcn}, we have $\nu(g+u_0, x) = 0$ for any $x \in \BB_{1/2}$.
By \eqref{eq:Lelong_number_usc} and Dini's lemma, $\nu(g+u_0, x, r)$ converges uniformly (independent of $x \in \overline{\BB}_{1/2}$) to $0$ as $r \to 0^+$. 
Therefore, for all $\nu < 2/\gm$, for all $x \in \BB_{1/2}$, there is a uniform $r_0 > 0$ such that $\nu(g+u_0, x, r_0) < \nu/2$ for any $x \in \overline{\BB}_{1/2}$.
By \eqref{eq:Lelong_number_usc}, one can find $M \in \BN$ such that for all $x \in \overline{\BB}_{1/2}$ and for all $k > M$, 
\[
    \nu(v_{k}, x, r_0) \leq \frac{3 \nu}{4}.
\]
Adding $\vep (|z|^2 - 1)$ for some small $\vep > 0$, one can find a uniform constant $c_\vep > 0$ such that 
\[
    c_\vep \leq \nu(v_{k} + \vep (|z|^2-1), x, r_0) \leq \nu
\]
for all $x \in \BB_{1/2} \cap X_k$.

\smallskip
\noindent{\bf Step 2: conclusion.}
We have $(\ddc G_x)^n \w [X_t] \geq \dt_x$ (cf. \cite[p.~2719]{DGG2020}) and thus 
\begin{equation}\label{eq:Poisson_Szego_inequality}
\begin{split}
    \psi_t(x) &\geq \int_{\BB} \psi_t (\ddc G_x)^n \w [X_t]\\
    &= \underbrace{\int_{\BB} G_x \ddc \psi_t \w (\ddc G_x)^{n-1} \w [X_t]}_{=: I_{\psi_t}(x)} + \underbrace{\int_{\pl\BB} \psi_t \dc G_x \w (\ddc G_x)^{n-1} \w [X_t]}_{=: J_{\psi_t}(x)}
\end{split}
\end{equation}
for $0 \geq \psi_t \in \PSH(\BB \cap X_t) \cap \CC^\infty(\BB \cap X_t)$.
Define $D(x,r) := \set{z \in \BB}{|\Phi_x(z)| \leq r}$.
We separate $I_t$ as 
\begin{equation}\label{eq:split_Lelong_number}
    I_t(x) = \underbrace{\int_{\overline{D(x,r)}} G_x \ddc \psi_t \w (\ddc G_x)^{n-1} \w [X_t]}_{=: K_{\psi_t}(x)} + \underbrace{\int_{\BB \setminus \overline{D(x,r)}} G_x \ddc \psi_t \w (\ddc G_x)^{n-1} \w [X_t]}_{=: L_{\psi_t}(x)}.
\end{equation}

\smallskip
Write $w_{k} = v_{k} + \vep (|z|^2 -1)$.
As in \cite[middle of p.~2720]{DGG2020}, the second term $|J_t|$ in \eqref{eq:Poisson_Szego_inequality} of $w_k$ is dominated by a uniform constant $C>0$.
The second term $|L_t|$ in \eqref{eq:split_Lelong_number} of $w_k$ is uniformly bounded by $C (\log r_0)/r_0^{2n-2}$ for some constant $C>0$ (cf. \cite[p.~70]{GZbook}).
Following the arguments in \cite[pp. 19-20]{Pan_2023} allows one to complete the proof.
\end{proof}

\subsection{A Demailly--Koll\'ar type result for functions with full Monge--Amp\`ere mass}
\subsubsection{Adapted measures}\label{sec:adapted_measure}
In this part, we recall the notion of adapted measures as in \cite[Sec.~6]{EGZ_2009} and a uniform $L^p$ estimate of the canonical densities as in \cite[Lem.~4.4]{DGG2020} under the following setting:

\begin{taggedbigset}{(klt)}\label{sett:klt}
Under Setting~\ref{sett:general_sett_normal_fibre}, further assume that $\CX$ is $\BQ$-Gorenstein and $X_0$ has at most klt singularities.
\end{taggedbigset}

\begin{rmk}\label{rmk:klt_open}
In Setting~\ref{sett:klt}, by inversion of adjunction (cf. \cite[Thm.~4.9]{Kollar_2013}), $\CX$ has klt singularities near $X_0$.
Moreover, $X_t$ has klt singularities for all $t$ close to $0$ (cf. \cite[Cor.~4.10]{Kollar_2013}).    
\end{rmk}

Let $m \geq 1$ be an integer such that $m K_{\CX/\BD}$ is Cartier.
Fix $h$ a smooth hermitian metric on $m K_{\CX/\BD}$. 
This induces a curvature form $\ii \Ta(m K_{\CX/\BD}, h)$ and it is locally written as $-\ddc \log |\Om|_h^{2/m}$ where $\Om$ is a local trivialization of $m K_{\CX/\BD}$.
As in \cite[Sec.~5]{EGZ_2009}, the curvature form is canonically attached to an element $H^0(\CX, \CC^\infty_\CX/\PH_\CX)$ where $\CC^\infty_\CX$ (resp. $\PH_\CX$) is the subsheaf of continuous functions on $\CX$ that are local restrictions of smooth functions (resp. pluriharmonic functions) under local embeddings. 
The first Chern class of $K_{\CX/\BD}$, $c_1(K_{\CX/\BD})$, is the image of $\frac{\ii}{m} \Ta(m K_{\CX/\BD}, h)$ in $H^1(\CX,\PH_\CX)$ via the map $[\cdot]$
\[
    H^0(\CX, \CC^\infty_\CX) \to H^0(\CX, \CC^\infty_\CX/ \PH_\CX) \xrightarrow[]{[\cdot]} H^1(\CX, \PH_\CX) \to 0
\]
induced by the short exact sequence $0 \to \PH_\CX \to \CC^\infty_\CX \to \CC^\infty_\CX/ \PH_\CX \to 0$. 
Note that every representative in $c_1(K_{\CX/\BD})$ is a curvature form $\frac{1}{m}\Ta(mK_{\CX/\BD},h)$ of a smooth hermitian metric $h$ on $K_{\CX/\BD}$ (cf. \cite[Prop.~5.12]{EGZ_2009}). 
A similar definition and properties hold also for $-K_{\CX/\BD}$.

\smallskip
Given $\Om$ a local trivialization of $m K_{\CX/\BD}$, we consider the following adapted measure on $X_t^\reg$
\[
    \mu_t := \ii^{n^2} \lt(\frac{\Om_t \w \overline{\Om_t}}{|\Om_t|_{h_t}^2}\rt)^{\frac{1}{m}} 
\]
where $\Om_t$ (resp. $h_t$) is the restriction of $\Om$ (resp. $h$) to $X_t$.
Note that $\mu_t$ does not depend on the choice of $\Om$.
By a klt version of \cite[Lem.~4.4]{DGG2020}, $\mu_t = f_t \om_t^n$ with $0 \leq f_t \in L^p(X_t, \om_t^n)$ for some $p > 1$ and there exists a constant $C_p > 0$ such that for all $t \in \bBD_{1/2}$,
\begin{equation}\label{eq:klt_Lp_estimate}
    \int_{X_t} f_t^p \om_t^n \leq C_p.
\end{equation}

\subsubsection{Demailly--Koll\'ar type result}
We first recall the theorem of Demailly and Koll\'ar:

\begin{thm}[{\cite[Main~Thm.~0.2~(2)]{Demailly_Kollar_2001}}]\label{thm:Demailly_Kollar}
Let $X$ be a complex manifold and $K$ a compact subset in $X$.
If $(u_j)_j$ is a sequence of quasi-psh functions which converges to a quasi-psh function $u$ in $L^1_\loc$, then for all $c < c_K(u)$, over some neighborhood $U$ of $K$ 
\[
    e^{-2 c u_j} \xrightarrow{L^1(U)} e^{-2 c u}
\]
where $c_K(u) := \sup \set{c \geq 0}{e^{-2cu}\text{ is $L^1$ on a neighborhood of $K$}}$. 
\end{thm}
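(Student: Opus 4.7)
The plan is to combine two ingredients and conclude by Vitali's convergence theorem: (a) pointwise almost everywhere convergence of $e^{-2cu_j}$ to $e^{-2cu}$, and (b) a uniform $L^q$ bound for some $q > 1$ on $e^{-2cu_j}$ in a neighborhood of $K$. Ingredient (a) is the easy part: by a diagonal argument it suffices to prove $L^1$ convergence along any subsequence, so extract one along which $u_j \to u$ almost everywhere in $L^1_{\loc}$, giving $e^{-2cu_j} \to e^{-2cu}$ almost everywhere. The substance lies in (b), which is essentially the effective openness $\liminf_j c_K(u_j) \geq c_K(u)$ with quantitative control.

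Localize near $K$: choose a bounded pseudoconvex $\Omega \Subset X$ containing $K$ and, after absorbing the reference quasi-psh potential into an auxiliary smooth strictly plurisubharmonic weight, reduce to the case where $u_j$ and $u$ are genuinely plurisubharmonic on $\Omega$. Fix $c'$ with $c < c' < c_K(u)$; the goal is to produce a neighborhood $U$ of $K$ with $\sup_j \int_U e^{-2c'u_j}\,dV < \infty$, which yields a uniform $L^{c'/c}$ bound on $e^{-2cu_j}$ by H\"older. The Bergman-kernel / Ohsawa--Takegoshi machinery provides the tool. For each $j \in \mathbb{N} \cup \{\infty\}$ (with $u_\infty := u$) and each $m \in \mathbb{N}$, consider the Hilbert space $\mathcal{H}_{m,j} := \{f \in \mathcal{O}(\Omega) : \int_\Omega |f|^2 e^{-2mu_j}\,dV < \infty\}$ with orthonormal basis $(\sigma_{m,j,k})_k$, and the associated Bergman weight $\varphi_{m,j} := \frac{1}{2m}\log \sum_k |\sigma_{m,j,k}|^2$. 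A submean / Cauchy estimate gives $\varphi_{m,j} \leq u_j + C_1/m$ uniformly on compacts of $\Omega$; the Ohsawa--Takegoshi extension of a peak section at a base point, with $L^2$ control against the weight $e^{-2mu_j}$, gives the reverse bound $u_j \leq \varphi_{m,j} + C_1/m$, with constants $C_1$ independent of both $j$ and $m$ thanks to the added strict plurisubharmonicity.

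Two observations then finish the argument. First, for $m$ fixed, $\varphi_{m,j} \to \varphi_{m,\infty}$ locally uniformly on $\Omega$ as $j \to \infty$; this follows from Montel's theorem applied to the orthonormal bases, with equi-$L^2$ bounds coming from the $L^1_{\loc}$ convergence $u_j \to u$ combined with a uniform Skoda--Zeriahi integrability of $e^{-2mu_j}$ valid for $m$ below a threshold determined by uniform upper bounds on Lelong numbers $\nu(u_j,\cdot)$. Second, because $\varphi_{m,\infty}$ has analytic singularities, the comparison $|u - \varphi_{m,\infty}| \leq C_1/m$ yields $c_K(\varphi_{m,\infty}) \to c_K(u)$ as $m \to \infty$. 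Choose $m_0$ large enough that $c' < c_K(\varphi_{m_0,\infty})$ on a slightly larger compact $K' \Supset K$; the locally uniform convergence $\varphi_{m_0,j} \to \varphi_{m_0,\infty}$ on $K'$ gives $c' < c_{K'}(\varphi_{m_0,j})$ with a uniform integral bound. Transferring through $|u_j - \varphi_{m_0,j}| \leq C_1/m_0$ (chosen so $c' + $ the loss $< c_K(u)$) yields $\sup_j \int_U e^{-2c'u_j}\,dV < \infty$, and Vitali closes the proof.

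The central obstacle is establishing the lower bound $u_j \leq \varphi_{m,j} + C_1/m$ with $C_1$ independent of $j$: the Ohsawa--Takegoshi extension requires a strict positivity of curvature of the weight, and to guarantee this uniformly along the sequence one has to introduce a fixed smooth strictly plurisubharmonic perturbation and carefully verify that the resulting $L^2$ estimate survives the subsequent comparisons with constants independent of $j$, so that the threshold gain $c < c' < c_K(u)$ is faithfully transmitted to the approximants.
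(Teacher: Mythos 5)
This statement is quoted verbatim from Demailly--Koll\'ar and the paper gives no proof of it, so there is nothing internal to compare against; I am therefore assessing your argument on its own terms. Your outer frame is the right one --- reduce to a.e.\ convergence of $e^{-2cu_j}$ plus a uniform $L^{c'/c}$ bound on a neighborhood of $K$ for some $c<c'<c_K(u)$, then conclude by Vitali --- and this is exactly how the $L^1$ convergence in part (2) is deduced from the effective semicontinuity in part (1) of the cited theorem. The problem is that your proof of the uniform bound does not work. The two-sided estimate $|u_j-\varphi_{m,j}|\le C_1/m$ is false: Ohsawa--Takegoshi gives the pointwise lower bound $\varphi_{m,j}\ge u_j-C_1/m$, but the sub-mean-value inequality only gives $\varphi_{m,j}(x)\le \sup_{B(x,r)}u_j+\frac{1}{m}\log(Cr^{-2n})$, and the gap between $\sup_{B(x,r)}u_j$ and $u_j(x)$ is unbounded near the polar set. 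Concretely, for $u=N\log|z|$ the Bergman approximant satisfies $\varphi_m\sim (N-n/m)\log|z|$, so $\varphi_m-u\sim\frac{n}{m}\log\frac{1}{|z|}\to+\infty$ at the pole. Your final transfer step needs precisely $u_j\ge\varphi_{m_0,j}-C_1/m_0$, i.e.\ that $u_j$ is \emph{less} singular than its Bergman regularization; the truth is the opposite, and the inequality fails exactly where the integrability of $e^{-2c'u_j}$ is decided. A uniform bound on $\int e^{-2c'\varphi_{m_0,j}}$ therefore gives no control on $\int e^{-2c'u_j}$ (the borderline example $c'N=n$ shows the loss is not a harmless $O(1/m_0)$ in the exponent but a genuine change of singularity type).

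There are secondary gaps as well: the locally uniform convergence $\varphi_{m,j}\to\varphi_{m,\infty}$ requires $\int_\Omega|f|^2e^{-2mu_j}\,dV\to\int_\Omega|f|^2e^{-2mu}\,dV$, which for the large $m=m_0$ you ultimately need is a statement of the same nature as the theorem being proved (your Skoda--Zeriahi fallback caps $m$ at roughly $2/\sup_x\nu(u_j,x)$, incompatible with letting $m_0\to\infty$); and the step ``locally uniform convergence of $\varphi_{m_0,j}$ implies a uniform integral bound for $e^{-2c'\varphi_{m_0,j}}$'' is itself the semicontinuity theorem for weights with analytic singularities, which still requires an argument (resolution of singularities or an induction). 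The actual proof in Demailly--Koll\'ar avoids Bergman kernels altogether: it establishes the effective uniform bound $\sup_j\int_Ue^{-2c'u_j}\,dV<\infty$ directly on the $u_j$ by induction on dimension, using Ohsawa--Takegoshi to extend from generic complex hyperplane slices (so that integrability of $e^{-2c'u_j}$ restricted to a slice propagates, with uniform constants, to a full-dimensional neighborhood), together with a compactness/covering argument over $K$. If you want to keep your structure, you must replace the Bergman-kernel transfer by that slicing argument.
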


We establish the following continuity result in the spirit of Theorem \ref{thm:Demailly_Kollar}.
\begin{prop}\label{prop:Demailly_Kollar_family}
In Setting~\ref{sett:klt}, if $(u_k)_k \subset \CE_{\fibre}(\CX, \om)$ converges to $u_0 \in \CE(X_0, \om_0)$, then for any $\gm > 0$,
\[
    \int_{X_k} e^{-\gm u_k} \dd \mu_k
    \xrightarrow[k \to +\infty]{} \int_{X_0} e^{-\gm u_0} \dd \mu_0. 
\]
\end{prop}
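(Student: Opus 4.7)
The plan is to split each integral into a contribution from a small neighborhood of the singular set $\CZ$ of $\pi$ (where uniform smallness is obtained by H\"older together with Lemma~\ref{lem:Skoda_for_E1_seq} and \eqref{eq:klt_Lp_estimate}) and a contribution from a relatively compact open subset of $\CX\setminus\CZ$ (on which pulling back via the trivializations $F_{\af,k}$ of Notation~\ref{nota} reduces matters to the classical Demailly--Koll\'ar theorem). The first statement of the proposition, with $\om_t^n$ in place of $\mu_t$, is the specialization of the argument below to $f_t \equiv 1$.

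First I would fix $\vep > 0$ and, arguing as in Lemma~\ref{lem:small_cap_near_sing} with a global $\om$-psh function whose pole set contains $\CZ$, produce an open neighborhood $\CW_\vep \supset \CZ$ satisfying $\Vol_{\om_t}(\CW_\vep \cap X_t) < \vep$ uniformly in $t$ near $0$. With $p>1$ from \eqref{eq:klt_Lp_estimate}, I pick $\alpha, \beta > 1$ satisfying $1/\alpha + 1/\beta + 1/p = 1$ and apply three-term H\"older together with Lemma~\ref{lem:Skoda_for_E1_seq} at exponent $\beta\gm$, obtaining the tail bound
\[
    \int_{\CW_\vep \cap X_k} e^{-\gm u_k}\, \dd\mu_k
    \leq \Vol_{\om_k}(\CW_\vep \cap X_k)^{1/\alpha}\, A_{\beta\gm}^{1/\beta}\, C_p^{1/p}
    \leq C(\gm)\, \vep^{1/\alpha}
\]
uniformly in $k$, and the same estimate holds on $\CW_\vep \cap X_0$.

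On the complement $\CU := \CX \setminus \overline{\CW_{\vep/2}} \Subset \CX \setminus \CZ$ I take a locally trivial finite cover $(B_\af, F_\af, \CB_\af)_\af$ as in Notation~\ref{nota}, with a subordinate partition of unity $(\chi_\af)_\af$. Pulling back to $B_\af$ via $F_{\af,k}$: by hypothesis $u_k \circ F_{\af,k} \to u_0$ in $L^1(B_\af)$; on $\CX\setminus\CZ$ the local generator $\Om$ of $mK_{\CX/\BD}$ and the smooth metric $h$ vary smoothly in $t$, so $F_{\af,k}^\ast(f_k \om_k^n) \to f_0\, \om_0^n$ smoothly on $B_\af$; and $\chi_\af \circ F_{\af,k} \to \chi_\af$ smoothly. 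By Proposition~\ref{prop:zero_lelong_num_of_E1_fcn} the function $u_0 \in \CE(X_0,\om_0)$ has vanishing Demailly--Lelong number everywhere, which on $X_0^\reg$ coincides with the usual Lelong number; hence Theorem~\ref{thm:Demailly_Kollar} applies on $B_\af$ at arbitrary exponent, giving $e^{-\gm u_k \circ F_{\af,k}} \to e^{-\gm u_0}$ in $L^1(B_\af)$. Combined with the local uniform bound on $F_{\af,k}^\ast(f_k \om_k^n)$ this yields
\[
    \sum_\af \int_{B_{\af,k}} \chi_\af\, e^{-\gm u_k}\, \dd\mu_k
    \xrightarrow[k \to +\infty]{}
    \sum_\af \int_{B_\af} \chi_\af\, e^{-\gm u_0}\, \dd\mu_0,
\]
and letting $\vep \to 0$ combines with the tail bound to conclude.

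The hardest part is the uniform control of the tail near $\CZ$: one must produce a neighborhood $\CW_\vep$ whose fibre-wise volumes are simultaneously small in $t$ while also bounding the exponential integrals there uniformly in $k$. This is exactly where the two main new ingredients of the paper intervene, namely the uniform Skoda--Zeriahi estimate of Lemma~\ref{lem:Skoda_for_E1_seq} (itself resting on the vanishing of Demailly--Lelong numbers for $\CE$-functions from Proposition~\ref{prop:zero_lelong_num_of_E1_fcn}), together with the uniform $L^p$ bound \eqref{eq:klt_Lp_estimate} on the adapted densities coming from the klt assumption on $X_0$.
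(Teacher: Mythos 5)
Your proposal is correct and follows essentially the same route as the paper's proof: the same decomposition into a small-volume neighborhood of $\CZ$ handled by a three-term H\"older estimate combining Lemma~\ref{lem:Skoda_for_E1_seq} with \eqref{eq:klt_Lp_estimate}, and a locally trivial finite cover of the complement where the classical Demailly--Koll\'ar theorem applies at arbitrary exponent together with the smooth convergence of the adapted densities. Your explicit invocation of Proposition~\ref{prop:zero_lelong_num_of_E1_fcn} to justify the arbitrary exponent is a point the paper leaves implicit, but it is the same argument.
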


\begin{proof}
Without loss of generality, we may assume that $(u_k)_k$ and $u_0$ are negative.
Let $D \geq 0$ be a constant such that $\sup_{X_k} u_k \geq -D$ for all $k \in \BN$ and $\sup_{X_{0}} u_{0} \geq -D$.
For $\vep>0$, we find an open neighborhood $\CW_\vep$ of $X_0^\sing$ such that 
\begin{equation}\label{eq:vol_small_open_near_sing_2}
    \Vol_{\om_{t}}(X_t \cap \CW_\vep) < \vep
\end{equation}
for all $t$ sufficiently close to $0$.
Also, there is a smaller open set $\CW_{\vep/2}$ with $X_0^\sing \subset \CW_{\vep/2} \Subset \CW_\vep$. 
Let $(B_\af, F_\af, \CB_\af)_\af$ be a locally trivial finite cover of $\CX \setminus \overline{\CZ_\vep}$.
Up to shrinking $\BD$, one can find $(\chi_\af)_{\af \in J} \cup \{\chi_\CW\}$ cutoffs satisfying the following conditions:
\begin{itemize}
    \item $\chi_\CW$ is supported in $\CW_\vep$ and for each $\af \in J$, $\chi_\af$ is supported in $\CB_\af$;
    \item $\chi_\CW + \sum_{\af \in J} \chi_j \equiv 1$ on $\pi^{-1}(\bBD_r)$ for some $r>0$ sufficiently small.
\end{itemize} 
Define $\chi_{\af,t} := {\chi_\af}_{|X_t}$ and $\chi_{\CW,t} = {\chi_\CW}_{|X_t}$.
We estimate the desired quantity by two terms:
{\small
\begin{align*}
    &\abs{\int_{X_k} e^{-\gm u_k} \dd \mu_k - \int_{X_0} e^{-\gm u_0} \dd \mu_0}\\ 
    &\leq \underbrace{\abs{\sum_{\af} \lt(\int_{B_{\af, k}} \chi_{\af,k} e^{-\gm u_k} f_k \om_k^n - \int_{B_\af} \chi_{\af,0} e^{-\gm u_0} f_0 \om_0^n\rt)}}_{=: \RN{1}} 
    + \underbrace{\abs{\int_{X_k \cap \CW_\vep} e^{-\gm u_k} f_k \om_k^n} + \abs{\int_{X_0 \cap \CW_\vep} e^{-\gm u_0} f_0 \om_0^n}}_{=: \RN{2}}.
\end{align*}}
For term $\RN{1}$, we have
{\footnotesize
\begin{align*}
    \RN{1} 
    &\leq \sum_{\af} \abs{\int_{B_\af} \lt(e^{-\gm u_k \circ F_{\af,k}} \frac{F^\ast_{\af, k} (\chi_{\af,k} f_k \om_k^n)}{\om_0^n} - e^{-\gm u_0} \chi_{\af,0} f_0\rt) \om_0^n}\\
    &\leq \underbrace{\sum_{\af} \abs{\int_{B_\af} \lt(e^{-\gm u_k \circ F_{\af,k}} - e^{-\gm u_0}\rt) \frac{F^\ast_{\af,k} (\chi_{\af,k} f_k \om_k^n)}{\om_0^n} \om_0^n}}_{=: \RN{3}} 
    + \underbrace{\sum_{\af} \abs{\int_{B_\af} e^{-\gm u_0} \lt(\frac{F^\ast_{\af,k} (\chi_{\af,k} f_k \om_k^n)}{\om_0^n}  - \chi_{\af,0} f_0\rt)\om_0^n}}_{=: \RN{4}}.
\end{align*}}
Let $\Om_\af$ be a local trivialization of $-K_{\CX/\BD}$ on $\CB_\af$.
One can write 
\[
    \frac{F^\ast_{\af,t} (f_t \om_t^n)}{\om_0^n}
    = F^\ast_{\af,t} \lt(\ii^{n^2} \res{\frac{\Om_\af \w \overline{\Om_\af}}{|\Om_\af|^2_h}}{X_{t} \cap \CB_\af}\rt) \cdot \frac{1}{\om_0^n}. 
\]
Then on $B_\af$, the sequence $\lt(\frac{F^\ast_{\af,k} (\chi_{\af,k} f_k \om_k^n)}{\om_0^n}\rt)_k$ converges smoothly to $\chi_{\af,0} f_0$. 
Hence, for each $\af$, there is a constant $C_\af > 0$ such that 
\[
    \frac{F^\ast_{\af,k} ( \chi_{\af,k} f_k \om_k^n)}{\om_0^n} \leq C_\af,
\]
and one also has 
\[
    \sup_{B_\af} \abs{\frac{F^\ast_{\af,k} (\chi_{\af,k} f_k \om_k^n)}{\om_0^n} - \chi_{\af,0} f_0} 
    \leq \frac{\vep}{|J|}
\]
for all $k$ sufficiently large.
By Theorem~\ref{thm:Demailly_Kollar}, for all $\gm > 0$,  $e^{-\gm u_k \circ F_{\af,k}}$ converges to $e^{-\gm u_0}$ on $L^1(B_\af)$.
Namely, for all $k$ sufficiently large, we have  
\[
    \int_{B_\af} \abs{e^{-\gm u_k \circ F_{\af,k}} - e^{-\gm u_0}} \om_0^n \leq \frac{\vep}{C_\af |J|}. 
\]
These yield the following estimates
\[
    \RN{3} \leq \sum_{j \in J} C_\af \int_{B_\af} \abs{e^{-\gm u_k \circ F_{\af,k}} - e^{-\gm u_0}} \om_0^n 
    \leq \vep,
\]
and
\[
    \RN{4} \leq \sum_{\af \in J} \frac{\vep}{|J|} \int_{X_0} e^{-\gm u_0} \om_0
    \leq \int_{X_0} e^{-\gm u_0} \om_0^n \vep.
\]

\smallskip
Next, fix two constants $q, r> 1$ such that $\frac{1}{p} + \frac{1}{q} + \frac{1}{r} = 1$ where $p$ is given by \eqref{eq:klt_Lp_estimate}.
By H\"older's inequality, Lemma~\ref{lem:Skoda_for_E1_seq} and \eqref{eq:vol_small_open_near_sing_2}, we obtain
{\footnotesize
\begin{align*}
    \RN{2} 
    &\leq C_p^{1/p} \lt[\lt(\int_{X_k \cap \CW_\vep} e^{-\gm q u_k} \om_k^n \rt)^{1/q} \Vol_{\om_k}^{1/r}(X_k \cap \CW_\vep) + \lt(\int_{X_{0} \cap \CW_\vep} e^{-\gm q u_{0}} \om_{0}^n \rt)^{1/q} \Vol_{\om_{0}}^{1/r} (X_0 \cap \CW_\vep)\rt] \\
    &\leq 2 C_p^{1/p} A_{\gm q}^{1/q} \vep^{1/r}.
\end{align*}}
All in all, one gets 
\[
    \abs{\int_{X_k} e^{-\gm u_k} \dd \mu_k - \int_{X_0} e^{-\gm u_0} \dd \mu_0} 
    \leq \RN{2} + \RN{3} + \RN{4}
    \leq 2 C_p^{1/p} A_{\gm q}^{1/q} \vep^{1/r} + \lt(1 + \int_{X_0} e^{-\gm u_0} \om_0^n\rt) \vep 
\]
which completes the proof.
\end{proof}

\section{Geometric applications}\label{sec:applications}
We are now ready to prove our main results, Theorems~\ref{bigthm:openness_KE}~and~\ref{bigthm:L_infty_est} from the introduction.

\subsection{Openness of K\"ahler--Einstein metrics}
In this section, we prove Theorem \ref{bigthm:openness_KE}.

\begin{taggedbigset}{(KE)}\label{set:K_stab_fami}
Under Setting~\ref{sett:klt}, further, assume that $-K_{\CX/\BD}$ is relatively ample, $\Aut(X_0)$ is discrete, and $X_0$ admits a K\"ahler--Einstein metric .
\end{taggedbigset}

\begin{thm}\label{thm:openness_KE}
Under Setting \ref{set:K_stab_fami}, for all $t$ sufficiently close to $0$, $X_t$ admits a K\"ahler--Einstein metric.
\end{thm}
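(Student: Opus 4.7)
The strategy is to leverage the variational characterization of K\"ahler--Einstein metrics from Theorem \ref{Summ_Thm}: on a klt $\BQ$-Fano variety with discrete automorphism group, the existence of a K\"ahler--Einstein metric is equivalent to the coercivity of the Ding functional on $\CE^1_\nmlz$, or equivalently to the Moser--Trudinger inequality
\[
\log \int_X e^{-u}\, d\mu \leq (1-A)(-\E(u)) + B, \quad u \in \CE^1_\nmlz(X,\om),
\]
for some $A \in (0,1)$, $B \geq 0$. Since $X_0$ admits a K\"ahler--Einstein metric with $\Aut(X_0)$ discrete, this holds on $X_0$ with constants $A_0, B_0$. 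By Remark \ref{rmk:klt_open} and the upper semi-continuity of $\dim \Aut(X_t)$, up to shrinking $\BD$ each $X_t$ is klt $\BQ$-Fano with discrete automorphism group. The plan is to upgrade the Moser--Trudinger inequality to nearby fibres uniformly in $t$ with slope $A_0/2$, whereupon Theorem \ref{Summ_Thm} applied fibrewise produces the K\"ahler--Einstein metric on each $X_t$.

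I argue by contradiction. Fix $A := A_0/2$ and $M > 2 B_0/A_0$. If uniform Moser--Trudinger with slope $A$ fails, there exist $t_k \to 0$ and $u_k \in \CE^1_\nmlz(X_{t_k}, \om_{t_k})$ with
\[
\log \int_{X_{t_k}} e^{-u_k}\, d\mu_{t_k} > (1-A)(-\E_{t_k}(u_k)) + k.
\]
Let $v_k := -\E_{t_k}(u_k)$. If $v_k$ stays bounded along a subsequence, Proposition \ref{prop:Hartogs_conv_subseq_family}(i) combined with Proposition \ref{bigprop:usc_energy} produces a limit $u_0 \in \CE^1(X_0, \om_0)$ in the family sense, and Proposition \ref{bigprop:DK} gives $\lim_k \log \int e^{-u_k}d\mu_{t_k} = \log \int e^{-u_0}d\mu_0 < +\infty$, contradicting the lower bound tending to $+\infty$. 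Thus $v_k \to +\infty$. I then reduce to a bounded-energy sequence via Berndtsson's convexity (Theorem \ref{thm:Berndtsson_cvx}): along the weak geodesic $[0,1] \ni s \mapsto u_{k,s}$ from $0$ to $u_k$, $\E_{t_k}$ is affine and $\log \int e^{-\cdot}d\mu_{t_k}$ is concave. Setting $s_k := M/v_k$ (valid for $k$ large, as $v_k \geq M$ eventually), the potential $\psi_k := u_{k,s_k}$ satisfies $\psi_k \leq 0$, $-\E_{t_k}(\psi_k) = M$, and by concavity
\[
\log \int_{X_{t_k}} e^{-\psi_k}d\mu_{t_k} \geq s_k \log \int_{X_{t_k}} e^{-u_k}d\mu_{t_k} > M(1-A).
\]

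I now pass to the limit. Proposition \ref{prop:Hartogs_conv_subseq_family} together with Proposition \ref{bigprop:usc_energy} extracts a subsequence converging in families to $\psi_0 \in \CE^1(X_0, \om_0)$ with $\sup_{X_0}\psi_0 \leq 0$ and $-\E_0(\psi_0) \leq M$, and Proposition \ref{bigprop:DK} upgrades the lower bound on the exponential integral to
\[
\log \int_{X_0} e^{-\psi_0}d\mu_0 = \lim_{k \to \infty} \log \int_{X_{t_k}} e^{-\psi_k}d\mu_{t_k} \geq M(1-A).
\]
Setting $\tilde\psi_0 := \psi_0 - \sup_{X_0}\psi_0 \in \CE^1_\nmlz(X_0,\om_0)$, one has $\tilde\psi_0 \geq \psi_0$ and hence $-\E_0(\tilde\psi_0) \leq -\E_0(\psi_0) \leq M$ by monotonicity of $\E_0$. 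Applying the Moser--Trudinger inequality on $X_0$ to $\tilde\psi_0$ and using $\sup_{X_0}\psi_0 \leq 0$,
\[
\log \int_{X_0} e^{-\psi_0}d\mu_0 = \sup_{X_0}\psi_0 + \log \int_{X_0} e^{-\tilde\psi_0}d\mu_0 \leq (1-A_0) M + B_0.
\]
Combining with the lower bound yields $M(A_0-A) = M A_0/2 \leq B_0$, contradicting $M > 2 B_0/A_0$.

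The hardest ingredient is the Demailly--Koll\'ar convergence in families (Proposition \ref{bigprop:DK}): it is precisely what allows the exponential integral on the fibres to pass to the limit on $X_0$ and close the contradiction, and its proof is delicate due to the singularities and the absence of an Ohsawa--Takegoshi extension. The second key step is the geodesic reduction via Berndtsson's convexity (Theorem \ref{thm:Berndtsson_cvx}), which converts a non-coercivity witness with potentially diverging energy into a sequence of uniformly bounded energy, on which the compactness and upper semi-continuity results of Propositions \ref{prop:Hartogs_conv_subseq_family} and \ref{bigprop:usc_energy} apply.
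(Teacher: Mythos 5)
Your argument is correct, and its engine is the same as the paper's: argue by contradiction, use Berndtsson convexity (Theorem~\ref{thm:Berndtsson_cvx}) to cut the geodesic from $0$ to the bad potential at a fixed energy level, pass to the limit with Propositions~\ref{prop:Hartogs_conv_subseq_family}, \ref{bigprop:usc_energy} and \ref{bigprop:DK}, and play the result against the coercivity of $\D_0$ on the central fibre. Where you differ is the packaging of the endgame: the paper only proves the weaker Claim~\ref{Claim} (a uniform energy bound on the sublevel set $\{\D_t\leq 0\}$) and then produces a minimizer on each fixed fibre by the direct method ($L^1$-compactness of energy-bounded sets plus lower semi-continuity of $\D_t$), whereas you prove a uniform coercivity/Moser--Trudinger inequality with slope $A_0/2$ on all nearby fibres and then invoke Theorem~\ref{Summ_Thm} fibrewise. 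Your version proves a slightly stronger intermediate statement, which is essentially the uniform Moser--Trudinger inequality of Proposition~\ref{prop:Moser--Trudinger}; the paper obtains that later by a separate argument relative to the K\"ahler--Einstein potentials and using the uniform $L^\infty$-estimate, so your route gives it more directly. Two small points to tidy up: the step $\log\int e^{-\psi_k}\dd\mu_{t_k}\geq s_k\log\int e^{-u_k}\dd\mu_{t_k}$ uses $L_k(0)=0$, i.e.\ that $\mu_{t_k}$ is a probability measure, so either work with $c_t\mu_t$ as the paper does or absorb the bounded factor $(1-s_k)\log\int_{X_{t_k}}\dd\mu_{t_k}$ into the constants (harmless, since $c_t$ is bounded away from $0$ and $\infty$); and you should note explicitly that $\sup_{X_{t_k}}\psi_k\in[-M,0]$ (from $\psi_k\leq 0$ and $\sup\psi_k\geq\E_{t_k}(\psi_k)=-M$), which is what legitimizes the application of Proposition~\ref{prop:Hartogs_conv_subseq_family}(i) to $(\psi_k)_k$.
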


By the singular version of the Yau--Tian--Donaldson correspondence \cite{Li_Tian_Wang_2022, Liu_Xu_Zhuang_2022}, Theorem \ref{bigthm:openness_KE} follows from the openness of K-stability \cite[Thm.~A]{Blum_Liu_2022}.

\smallskip
We prove that $X_t$ admits a unique singular K\"ahler--Einstein metric for any $t$ sufficiently close to $0$, without relying on the Yau--Tian--Donaldson correspondence and the openness of K-stability. 

\begin{proof}[Proof of Theorem~\ref{thm:openness_KE}]
Proposition~\ref{bigprop:usc_energy} and Proposition \ref{bigprop:DK} lead to the lower semi-continuity of the family of functionals $\{\D_t\}_{t}$ over normalized potentials with uniformly bounded Monge--Ampère energy. 
The strategy for proving Theorem \ref{thm:openness_KE} consists of  exploiting the coercivity of the Ding functional $\D_0$ on the central fibre to deduce that $\D_t$ necessarily admits a minimizer if $t$ is sufficiently close to $0$.

\smallskip
Recall that from Theorem \ref{Summ_Thm} there exist constants $A\in (0,1), B\geq 0$ such that 
\begin{equation}\label{eq:pf_thmA_coercivity}
    \D_0(u) \geq A (-\E_0(u)) - B
\end{equation}
for all 
\[
    u \in \CE^1_{\nmlz}(X_0, \om_0) := \set{v \in \CE^1(X_0,\om_0)}{\sup_{X_0} v =0}.
\] 
Here the Ding functionals $\D_t: \CE^1(X_t,\om_t)\to \BR$ are defined as
\[
    \D_t(u)=-\E_t(u)-\log\int_{X_t}e^{-u}c_t\dd \mu_t
\]
where $c_t:=1/\int_{X_t} \dd \mu_t$ makes $c_t\dd \mu_t$ a probability measure (see Section \ref{sec:adapted_measure}). 
One can check that $c_t$ is continuous in $t \in \BD$ (cf. Proposition~\ref{bigprop:DK}), and bounded away from $0$ and $+\infty$.
Let also $C_{SL}>0$ be the uniform constant given by the sup-$L^1$ comparison in Theorem \ref{thm:SL_and_Skoda_in_family}. 
We claim the following:

\begin{claim}\label{Claim}
For any $\vep>0$ there exists $r_\vep>0$ such that, for all $t\in \BD_{r_\vep}$, if $\D_t(u)\leq 0$ for $u\in \CE^1_{\nmlz}(X_t,\om_t)$ then $-\E_t(u)< \frac{B+C_{SL}}{A}+\vep$.
\end{claim}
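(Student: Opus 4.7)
The approach is to argue by contradiction and exploit the coercivity~\eqref{eq:pf_thmA_coercivity} of $\D_0$ by passing a contradicting sequence to a limit on the central fibre in the family sense. Suppose the claim fails for some $\vep>0$: there exist $t_k\to 0$ and $u_k\in \CE^1_{\nmlz}(X_{t_k},\om_{t_k})$ with $\D_{t_k}(u_k)\leq 0$ and $M_k:=-\E_{t_k}(u_k)\geq \tfrac{B+C_{SL}}{A}+\vep$. Since $\sup_{X_{t_k}}u_k=0$, applying H\"older's inequality against the uniform Skoda integrability (Theorem~\ref{thm:SL_and_Skoda_in_family}) and the uniform $L^p$-bound~\eqref{eq:klt_Lp_estimate} on the densities $f_{t_k}$ (and using boundedness of $c_t$) yields a uniform upper bound on $\log\int_{X_{t_k}}e^{-u_k}c_{t_k}\dd\mu_{t_k}$. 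Rewriting $\D_{t_k}(u_k)\leq 0$ as $M_k\leq \log\int_{X_{t_k}}e^{-u_k}c_{t_k}\dd\mu_{t_k}$ then forces $M_k$ to be uniformly bounded above, so up to extraction $M_k\to M_\infty<+\infty$. By Proposition~\ref{prop:Hartogs_conv_subseq_family}~(i) we may further arrange $u_k\to u_0\in\PSH(X_0,\om_0)$ in the family sense, with $\sup_{X_0}u_0\leq 0$.

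The next step is to transfer the information on $(u_k)_k$ to $u_0$. Upper semi-continuity of the Monge--Amp\`ere energy (Proposition~\ref{prop:usc_energy_family}) gives $-\E_0(u_0)\leq M_\infty<+\infty$, so $u_0\in \CE^1(X_0,\om_0)$; in particular Proposition~\ref{prop:Demailly_Kollar_family} (together with the continuity of $c_t$) provides $\int_{X_{t_k}}e^{-u_k}c_{t_k}\dd\mu_{t_k}\to \int_{X_0}e^{-u_0}c_0\dd\mu_0$. Combining these yields on the one hand $\D_0(u_0)\leq \liminf_k\D_{t_k}(u_k)\leq 0$, and on the other the limit inequality $M_\infty\leq \log\int_{X_0}e^{-u_0}c_0\dd\mu_0$. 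Since $u_0$ need not be normalized, set $s_0:=\sup_{X_0}u_0\leq 0$ and $\tilde u_0:=u_0-s_0\in\CE^1_{\nmlz}(X_0,\om_0)$; the sup-$L^1$ comparison applied to $u_k$, together with Lemma~\ref{lem:L1_integral_converge}, forces $s_0\geq -C_{SL}$, i.e.\ $|s_0|\leq C_{SL}$.

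Translation invariance of $\D_0$ and coercivity~\eqref{eq:pf_thmA_coercivity} applied to $\tilde u_0$ then give $0\geq \D_0(\tilde u_0)\geq A\bigl(-\E_0(u_0)+s_0\bigr)-B$, which rearranges into
\[
\log\int_{X_0}e^{-u_0}c_0\dd\mu_0\leq (1-A)\bigl(-\E_0(u_0)\bigr)+A|s_0|+B.
\]
Plugging in $-\E_0(u_0)\leq M_\infty$ and $|s_0|\leq C_{SL}$ yields $M_\infty\leq (1-A)M_\infty+AC_{SL}+B$, i.e.\ $M_\infty\leq \tfrac{B}{A}+C_{SL}$. Since $A\in(0,1)$, this strictly contradicts $M_\infty\geq \tfrac{B+C_{SL}}{A}+\vep$, closing the argument.

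The main obstacle is the middle step: justifying the passage of the Ding functional to the limit along a sequence only converging in the family sense. This requires both the upper semi-continuity of $\E$ (Proposition~\ref{prop:usc_energy_family}) and the Demailly--Koll\'ar-type convergence of the exponential integrals for functions of full Monge--Amp\`ere mass (Proposition~\ref{prop:Demailly_Kollar_family}), precisely the non-trivial analytic inputs established earlier in the paper. Once both are in hand, the remainder reduces to the elementary algebraic manipulation above.
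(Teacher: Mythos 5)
Your argument has a genuine gap at its very first step, and the rest of the proof depends on it. You claim that, since $\sup_{X_{t_k}}u_k=0$, H\"older's inequality combined with the uniform Skoda integrability of Theorem~\ref{thm:SL_and_Skoda_in_family} and the uniform $L^p$-bound \eqref{eq:klt_Lp_estimate} gives a uniform upper bound on $\log\int_{X_{t_k}}e^{-u_k}c_{t_k}\,\dd\mu_{t_k}$, hence that $M_k=-\E_{t_k}(u_k)$ is bounded. This does not work: Theorem~\ref{thm:SL_and_Skoda_in_family} only provides $\int_{X_t}e^{-\af(\psi_t-\sup\psi_t)}\om_t^n\leq C_\af$ for \emph{one fixed} exponent $\af>0$, which is in general small, whereas the H\"older argument against $f_{t_k}\in L^p$ requires control of $\int e^{-q u_k}\om_{t_k}^n$ with $q=p/(p-1)>1$. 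There is no reason that $\af\geq q$, and indeed a uniform bound on $\int_{X}e^{-u}\dd\mu$ over all sup-normalized $\om$-psh $u$ would amount to an alpha-invariant bound $\geq q>1$, which fails already on $\BP^n$ (truncations of a potential with large Lelong number make $\int e^{-u_j}\dd\mu\to+\infty$ while $\sup u_j=0$). Consequently you cannot conclude that $M_k$ is bounded, you cannot extract a limit $u_0\in\CE^1(X_0,\om_0)$, and the subsequent applications of Proposition~\ref{prop:usc_energy_family} and Proposition~\ref{prop:Demailly_Kollar_family} (the latter needs the limit to lie in the full-mass class $\CE(X_0,\om_0)$) are not justified. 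Note that if your first step were correct, the boundedness of the energy would follow with no reference to the coercivity of $\D_0$ at all, which should be a warning sign.

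This unboundedness is precisely the difficulty the paper's proof is designed to circumvent, via a geodesic trick: instead of working with $u_k$ itself, one takes the unit-speed geodesic $g_k$ in $\CE^1_{\nmlz}(X_k,\om_k)$ joining $0$ to $u_k$ and sets $v_k:=g_k(M+\vep)$ with $M=\frac{B+C_{SL}}{A}$. Since $\E_k$ is affine along geodesics, $-\E_k(v_k)=M+\vep$ is bounded by construction, and Berndtsson's convexity (Theorem~\ref{thm:Berndtsson_cvx}) together with $\D_k(0)=0$ and $\D_k(u_k)\leq 0$ gives $\D_k(v_k)\leq 0$. One then runs exactly the limiting argument you propose, but applied to $(v_k)_k$: Proposition~\ref{prop:Hartogs_conv_subseq_family}, Proposition~\ref{prop:usc_energy_family}, Lemma~\ref{lem:L1_integral_converge}, Proposition~\ref{prop:Demailly_Kollar_family}, and the coercivity \eqref{eq:pf_thmA_coercivity} of $\D_0$ yield $A(M+\vep)\leq B+C_{SL}$, a contradiction. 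Your final algebraic manipulation (translation by $\sup_{X_0}$, the bound $|{\sup_{X_0}}|\leq C_{SL}$, and the coercivity estimate) matches the paper's and is fine; what is missing is the device that makes the sequence have uniformly bounded energy in the first place.
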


\begin{proof}[Proof of Claim~\ref{Claim}]
Fix $\vep>0$, set $M:=\frac{B+C_{SL}}{A}$ and assume by contradiction that there exists a sequence $u_{t_k}\in \CE^1_{\nmlz}(X_{t_k},\om_{t_k})$ such that $-\E_{t_k}(u_{t_k})\geq M+\vep$. 
To lighten the notation we set $X_k:=X_{t_k}, \om_k:=\om_{t_k}$ and similarly for other quantities.
Let $g_k(s)$ be the unit-speed geodesic connecting $0$ and $u_k$ in $\CE^1_{\nmlz}(X_k,\om_k)$ and set $v_k:= g_k(M+\vep)$.
By Proposition~\ref{prop:Hartogs_conv_subseq_family}, up to extracting a subsequence, $v_k$ converges to a function $v \in \PSH(X_0,\om_0)$ in the family sense. 
Then Proposition~\ref{bigprop:usc_energy} implies $-M-\vep = \limsup_{k \to +\infty} \E_k(v_k) \leq \E_0(v)$; hence $v \in \CE^1(X_0,\om_0)$. 
Recall that from Proposition~\ref{prop:Hartogs_conv_subseq_family},  and Lemma~\ref{lem:L1_integral_converge},  
\begin{equation}\label{eq:pf_thmA_step1_sup}
    -C_{SL} \leq \sup_{X_0} v \leq 0.
\end{equation}
We have $-M-\vep \leq \E_0(\wv) \leq 0$ where $\wv = v - \sup_{X_0} v$.

\smallskip
As $\D_k(0)=0$ and $\D_k(u_k)\leq 0$, the convexity of $\D_k(\bullet)$ along the geodesic $s \mapsto g_k(s)$ (see Theorem \ref{thm:Berndtsson_cvx}) gives $\D_k(v_k)\leq 0$.
Hence,
\begin{align*}
    M+\vep = -\E_k(v_k) = \D_k(v_k) + \log\lt(\int_{X_k} e^{-v_k} c_k\dd \mu_k\rt)
    \leq \log\lt(\int_{X_k} e^{-v_k} c_k\dd \mu_k\rt).
\end{align*}
Taking limit on both sides, by Proposition~\ref{bigprop:DK}, \eqref{eq:pf_thmA_step1_sup}, and \eqref{eq:pf_thmA_coercivity}, we obtain 
\begin{align*}
    M+\vep &\leq \log \lt(\int_{X_0} e^{-v} c_0 \dd\mu_0\rt)
    = \log \lt(\int_{X_0} e^{- \wv} c_0 \dd\mu_0\rt) - \sup_{X_0} v \\
    &\leq \log \lt(\int_{X_0} e^{- \wv} c_0 \dd\mu_0\rt) + C_{SL}
    = - \D_0(\wv) - \E_0(\wv) + C_{SL}\\
    &\leq (1-A) (-\E_0(\wv)) + B + C_{SL}
    \leq (1-A) (M+\vep) + B + C_{SL}.
\end{align*}
Thus, we have $B+C_{SL}+A\vep =A(M+\vep) \leq B + C_{SL}$, which yields a contradiction.
\end{proof}

Fix $\vep>0$. By the claim, one can find $r_\vep > 0$ such that for all $t \in \BD_{r_\vep}$, 
\[
    \set{u \in \CE^1_{\nmlz}(X_t,\om_t)}{\D_t(u) \leq 0} 
    \subset \set{u \in \CE^1_{\nmlz}(X_t,\om_t)}{-\E_t(u) \leq M + \vep} 
    =: \CE^1_{M+\vep}(X_t,\om_t).
\]
By the compactness of $\CE^1_{M+\vep}(X_t,\om_t)$ with respect to the $L^1$-topology and the lower semi-continuity of the Ding functional, for each $t \in \BD_{r_\vep}$, there exists a minimizer in $\CE^1(X_t, \om_t)$ of $\D_t$. 
This function is a K\"ahler--Einstein potential on $X_t$ by \cite[Thm. 4.8]{BBEGZ_2019}. 
\end{proof}

\begin{rmk}
We give a remark on the uniqueness of K\"ahler--Einstein metrics. 
As $X_0$ admits a unique K\"ahler--Einstein metric, the connected component of the identity of the automorphism group $\Aut^\circ(X_0)$ is the identity (cf. \cite[Thm.~5.1]{BBEGZ_2019}). 
In particular, we have 
\[
    H^0(X_0, T_{X_0}) = \{0\}.
\] 
Thus, by the upper semi-continuity of $\BD \ni t \mapsto \dim H^0(X_t, T_{X_t})$ (see \cite[Ch.~3, Prop. 1.7]{Banica_Stanasila_1976}), we deduce that $\Aut^\circ(X_t) = \{\Id\}$ for any $t$ sufficiently close to $0$. 
Hence the uniqueness of K\"ahler--Einstein metrics again follows from \cite[Thm.~5.1]{BBEGZ_2019}.  
\end{rmk}

\subsection{A uniform $L^\infty$-estimate of K\"ahler--Einstein potentials}\label{sec:L_infty_est}
We prove here the uniform $L^\infty$-estimate part of Theorem~\ref{bigthm:L_infty_est}.

\smallskip
Fix $\om \in c_1(-K_{\CX/\BD})$ a smooth K\"ahler metric on $\CX$. 
Under Setting \ref{set:K_stab_fami}, Theorem \ref{bigthm:openness_KE} implies that, for all $t$ sufficiently close to $0$, $X_t$ admits a unique K\"ahler--Einstein potential $\vph_{\KE,t} \in \PSH(X_t, \om_t)$ such that  
\begin{equation}\tag{$\MA_t$}\label{eq:pf_thmA_MA_eqn}
    \frac{1}{V} (\om_t +\ddc_t \vph_{\KE,t})^n = e^{-\vph_{\KE,t}} c_t\mu_t
\end{equation}
where $\mu_t$ is the adapted measure on $X_t$ related to $\om_t$ (see Section \ref{sec:adapted_measure}), and $c_t = \frac{1}{\int_{X_t} \dd\mu_t}$. 
We denote by $\om_{\KE,t} = \om_t + \ddc_t \vph_{\KE,t}$ the unique K\"ahler--Einstein metric on $X_t$.

\begin{thm}\label{thm:L_infty_est}
Under the above setting, there exists $C_{\MA}>0$ such that for all $t$ sufficiently close to 0
$$
\osc_{X_t} \vph_{\KE,t}\leq C_{\MA}.
$$
\end{thm}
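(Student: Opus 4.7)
The strategy is to reduce the uniform oscillation bound to a uniform $L^p$ control of the Monge--Amp\`ere density in $(\MA_t)$, and then to conclude with a Ko{\l}odziej-type iteration carried out with constants independent of $t$. The uniform pluripotential inputs are the sup-$L^1$ comparison and the $\af$-invariant of Theorem~\ref{thm:SL_and_Skoda_in_family}, together with the Skoda--Zeriahi estimate for sequences in $\CE$ that converge in families, Lemma~\ref{lem:Skoda_for_E1_seq}. Set $s_t:=\sup_{X_t}\vph_{\KE,t}$ and $\widetilde{\vph}_{\KE,t}:=\vph_{\KE,t}-s_t\leq 0$. Integrating $(\MA_t)$ yields the normalization identity $c_t\int_{X_t}e^{-\vph_{\KE,t}}\dd\mu_t=1$, so Jensen's inequality forces $s_t\geq 0$, while H\"older's inequality combined with a Skoda-type bound on $\int_{X_t}e^{-\gm\widetilde{\vph}_{\KE,t}}\om_t^n$ will control $s_t$ from above. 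It therefore suffices to bound $-\inf_{X_t}\widetilde{\vph}_{\KE,t}$.

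\textbf{Uniform energy bound and family Skoda.}
Since $\vph_{\KE,t}$ minimizes $\D_t$ and $\D_t$ is translation invariant, $\D_t(\widetilde{\vph}_{\KE,t})\leq \D_t(0)=0$, and Claim~\ref{Claim} from the proof of Theorem~\ref{thm:openness_KE} provides $M>0$ such that $-\E_t(\widetilde{\vph}_{\KE,t})\leq M$ for $t$ near $0$. I then argue by contradiction for the uniform integrability: if $\int_{X_{t_k}}e^{-\gm\widetilde{\vph}_{\KE,t_k}}\om_{t_k}^n\to+\infty$ along some $t_k\to 0$ and some $\gm>0$, Proposition~\ref{prop:Hartogs_conv_subseq_family} extracts a further subsequence converging in the family sense to some $\vph_\infty\in\PSH(X_0,\om_0)$; upper semi-continuity of the Monge--Amp\`ere energy (Proposition~\ref{prop:usc_energy_family}) and the uniform energy bound force $\vph_\infty\in\CE^1(X_0,\om_0)\subset\CE(X_0,\om_0)$, so Lemma~\ref{lem:Skoda_for_E1_seq} bounds the integrals along the sequence, a contradiction. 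Hence $\int_{X_t}e^{-\gm\widetilde{\vph}_{\KE,t}}\om_t^n$ is uniformly bounded for every $\gm>0$, and the discussion above yields a uniform bound on $s_t$.

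\textbf{Uniform $L^p$ density bound and Ko{\l}odziej.}
Writing $F_t:=Vc_te^{-\vph_{\KE,t}}f_t$ for the density in $(\MA_t)$ with respect to $V^{-1}\om_t^n$, the uniform klt bound $\|f_t\|_{L^{p_0}(\om_t^n)}\leq C_{p_0}$ from \eqref{eq:klt_Lp_estimate} together with the uniform control of $s_t$ and H\"older's inequality with conjugate exponents $q=p_0/p$ and $q'=p_0/(p_0-p)$ reduce the control of $\|F_t\|_{L^p(\om_t^n)}$ to that of $\int_{X_t}e^{-pq'\widetilde{\vph}_{\KE,t}}\om_t^n$, which is uniformly bounded by the previous step for any $p\in(1,p_0)$. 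Ko{\l}odziej's estimate applied fiberwise to
\begin{equation*}
    \frac{1}{V}(\om_t+\ddc_t\widetilde{\vph}_{\KE,t})^n = e^{-s_t}F_t\,\om_t^n
\end{equation*}
then gives $\osc_{X_t}\vph_{\KE,t}\leq C_{\MA}$: the uniform sup-$L^1$ comparison and the uniform $\af$-invariant of Theorem~\ref{thm:SL_and_Skoda_in_family} feed the capacity iteration with constants independent of $t$.

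\textbf{Main obstacle.}
The crucial and non-classical step is the uniform integrability of $e^{-\gm\widetilde{\vph}_{\KE,t}}$ across fibres: varying complex structures and singularities near $X_0^{\sing}$ rule out any direct quasi-isometric reduction to a single variety, and this is precisely what the convergence-in-families machinery developed in Sections~\ref{sec:conv_fami}--\ref{sec:DK_fami} is designed to overcome, through the combination of convergence in families with membership of the subsequential limit in $\CE^1(X_0,\om_0)$. Tracking constants through Ko{\l}odziej's iteration to make it uniform is technically delicate but routine once the uniform pluripotential inputs of Theorem~\ref{thm:SL_and_Skoda_in_family} and the uniform $L^p$ bound on $F_t$ are available.
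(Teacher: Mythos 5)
Your proposal is correct and takes essentially the same route as the paper: the uniform energy bound via Claim~\ref{Claim}, a compactness-in-families contradiction argument giving uniform exponential integrability of the sup-normalized potentials, and then a uniform Ko{\l}odziej-type estimate with constants controlled by Theorem~\ref{thm:SL_and_Skoda_in_family}. The only minor variations are that you use Lemma~\ref{lem:Skoda_for_E1_seq} together with H\"older and the klt bound \eqref{eq:klt_Lp_estimate} where the paper invokes the family Demailly--Koll\'ar result (Proposition~\ref{prop:Demailly_Kollar_family}, itself built on that lemma), and that you sketch the uniform capacity iteration where the paper concludes by verifying hypotheses \ref{item:DGG_hypo_Skoda}--\ref{item:DGG_hypo_Lp} and citing \cite[Thm.~A]{DGG2020}.
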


\begin{proof}
Up to shrinking $\BD$, one may assume that for all $t \in \BD$, each fibre $X_t$ admits a unique K\"ahler--Einstein potential $\vph_{\KE,t}$ solving \eqref{eq:pf_thmA_MA_eqn} (cf. Theorem \ref{thm:openness_KE}). Set $\wvph_{\KE,t} = \vph_{\KE,t} - \sup_{X_t} \vph_{\KE,t}$. By \cite[Thm.~4.8]{BBEGZ_2019}, $\wvph_{\KE,t}$ is the unique minimizer of $\D_t(\bullet)$ on $\CE^1_{\nmlz}(X_t, \om_t)$, and in particular $\D_t(\wvph_{\KE,t})\leq \D_t(0)=0$. Thus, thanks to Claim \ref{Claim}, there exists a uniform constant $C_\E>0$ such that
\begin{equation}\label{eq:pf_thmA_energy_lowerbdd}
    \E_t(\wvph_{\KE,t})\geq -C_\E
\end{equation}
for all $t\in \BD$.

\smallskip
Now, we shall prove a uniform $L^p$-estimate for $(\wvph_{\KE,t})_{t \in \BD}$ with respect to the measures $(\mu_t)_{t \in \BD}$.
By Proposition~\ref{prop:Hartogs_conv_subseq_family}, and Lemma~\ref{lem:L1_integral_converge}, up to extracting and relabeling, any sequence $(\wvph_{\KE,k})_k$ converges to a function $\vph \in \CE^1(X_0,\om_0)$ with $-C_{SL} \leq \sup_{X_0} \vph \leq 0$. For any $\gamma>1$, Proposition~\ref{bigprop:DK} gives
\[
    \int_{X_k}e^{-\gamma \wvph_{\KE,k}} \dd \mu_k 
    \longrightarrow 
    \int_{X_0}e^{-\gamma\vph} \dd \mu_0.
\]
In particular $\lVert e^{-\wvph_{\KE,k}}\rVert_{L^\gamma(\mu_k)}$ is uniformly bounded. Hence, an easy contradiction argument yields that for any $\gamma>1$ there exists a uniform constant $A_\gamma>0$ such that
\begin{equation}\label{eq:pf_thmA_MT_ineq}
    \lVert e^{-\wvph_{\KE,t}}\rVert_{L^\gamma(\mu_t)}\leq A_\gamma
\end{equation}
for all $t\in \BD$.

\smallskip
By \cite[Thm.~A]{DGG2020}, to establish a uniform $L^\infty$-estimate to the solution of \eqref{eq:pf_thmA_MA_eqn}, it suffices to verify two conditions: 
\begin{enumerate}[label=\textbf{H.\arabic*}]
    \item\label{item:DGG_hypo_Skoda} 
    There exist $\bt > 0$ and $B_\bt > 0$ such that for all $\psi_t \in \PSH(X_t,\om_t)$, 
    \[
        \int_{X_t} e^{-\bt (\psi_t - \sup_{X_t} \psi_t)} c_t \dd\mu_t \leq B_\bt;
    \]
    \item\label{item:DGG_hypo_Lp} 
    There exist $q > 1$ and $C > 0$ such that $\int_{X_t} e^{- q \vph_{\KE,t}} c_t\dd\mu_t \leq C$.
\end{enumerate}
Condition \ref{item:DGG_hypo_Skoda} follows from Theorem~\ref{thm:SL_and_Skoda_in_family} and \eqref{eq:klt_Lp_estimate}.
Indeed, choosing $\bt > 0$ such that $\frac{p}{p-1} \bt \leq \af$, by H\"older's inequality, we have
\[
    \int_{X_t} e^{- \bt(\psi_t - \sup_{X_t} \psi_t)} \dd\mu_t 
    \leq \lt(\int_{X_t} e^{- \frac{p}{p-1} \bt(\psi_t - \sup_{X_t} \psi_t)} \om_t^n\rt)^{(p-1)/p}
    \lt(\int_{X_t} f_t^p \om_t^n\rt)^{1/p} 
    \leq C_\af^{(p-1)/p} C_p^{1/p}.
\]
Condition \ref{item:DGG_hypo_Lp} is a direct consequence of \eqref{eq:pf_thmA_energy_lowerbdd} and \eqref{eq:pf_thmA_MT_ineq}.
This completes the proof.
\end{proof}

\subsection{Higher-order estimates away from the singular locus}\label{sec:higher_order_est}
The idea of getting a Laplacian estimate goes back to \cite{Paun_2008} (see also \cite[Appx.~B]{BBEGZ_2019}) on a fixed variety.
However, along a degenerate family $\pi: \CX \to \BD$ and its resolution $p: \CY \to \CX$, we would not have uniform control over the lower bound of bisectional curvature along each fibre even of smooth K\"ahler metrics, since the central fibre over the resolution could have several components. 
Therefore, instead of using Siu's inequality in \cite{Paun_2008, BBEGZ_2019}, it is more efficient to follow the strategy as in \cite[Prop.~2.1]{Guenancia_2016} with an argument by Chern--Lu inequality \cite{Chern_1968, Lu_1968}.
The argument is standard and it provides uniform Laplacian estimates away from $\CZ$ the singular locus of $\pi$:
for any compact subset $K \Subset \CX \setminus \CZ$, there is a constant $C_K > 0$ such that for all $t$ close to $0$,
\begin{equation}\label{eq:lap_est}
    \lt(\tr_{\om_t} \om_{\KE,t}\rt)_{|K_t} \leq C_K
\end{equation}
where $K_t = K \cap X_t$.
Then by \eqref{eq:lap_est} and standard bootstrapping argument (Evans--Krylov, and Schauder estimates), one can get uniform higher-order estimates
\begin{equation}\label{eq:higher_order_est}
    \norm{\vph_{\KE,t}}_{\CC^j(K_t)} \leq C_{K,j}
\end{equation}
for any $j \in \BN$.

\smallskip
With the above observations, we now prove the continuity of K\"ahler--Einstein currents in Theorem~\ref{bigthm:L_infty_est}:

\begin{cor}\label{cor:conti_KE_current}
Under the same setting as in Theorem~\ref{thm:L_infty_est}, $\vph_{\KE,t}\in \PSH(X_t,\om_t)$ $\CC^\infty$-converges in families to $\vph_{\KE,0}\in\PSH(X_0,\om_0)$, and for all $k \in \{0,1, \cdots, n\}$, the following map is continuous near $0 \in \BD$
\[
    t \longmapsto \om_{\KE,t}^k \w [X_{t}] \in (\CD_{(n-k,n-k)}(\CX))',
\]
where $(\CD_{(n-k,n-k)}(\CX))'$ is the space of currents of bidimension $(n-k,n-k)$ (bidegree $(k+1,k+1)$).
\end{cor}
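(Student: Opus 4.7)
The plan is to combine the uniform $L^\infty$-bound from Theorem~\ref{thm:L_infty_est} with the local higher-order estimates \eqref{eq:higher_order_est}, and then invoke uniqueness of the K\"ahler--Einstein metric on $X_0$ to upgrade subsequential convergence to convergence of the whole family. First I would observe that $(\vph_{\KE,t})_t$ is uniformly bounded in $L^\infty$: Theorem~\ref{thm:L_infty_est} controls its oscillation, while integrating \eqref{eq:pf_thmA_MA_eqn} against $\mu_t$ together with Theorem~\ref{thm:SL_and_Skoda_in_family} and \eqref{eq:klt_Lp_estimate} pins down $\sup_{X_t}\vph_{\KE,t}$ uniformly. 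Proposition~\ref{prop:Hartogs_conv_subseq_family} then provides, for any sequence $t_k\to 0$, a subsequence along which $\vph_{\KE,t_k}$ converges in families to some $\vph_\infty\in \PSH(X_0,\om_0)\cap L^\infty(X_0)$. The higher-order estimates \eqref{eq:higher_order_est} guarantee that on every compact $K\Subset \CX\setminus\CZ$ the family is bounded in all $\CC^j$-norms, so after a diagonal extraction one may further assume $\vph_{\KE,t_k}\to \vph_\infty$ in $\CC^\infty_{\loc}$ away from $\CZ$.

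Next I would identify $\vph_\infty$ with $\vph_{\KE,0}$. Passing to the limit in \eqref{eq:pf_thmA_MA_eqn} on $X_0^\reg$ by $\CC^\infty$-convergence, and using that $f_t$, $c_t$, and $\om_t$ vary smoothly on the regular part, one sees that $\vph_\infty$ solves the K\"ahler--Einstein equation on $X_0^\reg$. Since $\vph_\infty$ is bounded and $\om_0$-psh, both sides of the equation extend via Bedford--Taylor to positive Borel measures on $X_0$ putting no mass on the pluripolar locus $X_0^\sing$, hence they coincide globally. By uniqueness of the K\"ahler--Einstein potential on $X_0$ (which follows from discreteness of $\Aut(X_0)$ through \cite[Thm.~5.1]{BBEGZ_2019}), one gets $\vph_\infty=\vph_{\KE,0}$. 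Since this limit is independent of the extracted subsequence, the entire family $\vph_{\KE,t}$ converges in families and $\CC^\infty_{\loc}$ away from $\CZ$ to $\vph_{\KE,0}$.

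For the continuity of $t\mapsto \om_{\KE,t}^k\wedge [X_t]$, I would fix a test form $\eta\in\CD_{(n-k,n-k)}(\CX)$ and $\vep>0$, and use Lemma~\ref{lem:small_cap_near_sing} to pick an open neighborhood $\CW_\vep$ of $\CZ$ whose slices have uniformly small Monge--Amp\`ere capacity. The uniform $L^\infty$-bound on $\vph_{\KE,t}$ then turns this into a uniform smallness of $\int_{X_t\cap \CW_\vep}|\eta\wedge \om_{\KE,t}^k|$. On a locally trivial finite cover of the complement $\CX\setminus \overline{\CW_\vep}$, the $\CC^\infty$-convergence of $\vph_{\KE,t}$ together with the Bedford--Taylor continuity of mixed Monge--Amp\`ere measures under smooth convergence of uniformly bounded potentials gives the convergence of the remaining piece, and a standard $\vep/3$-argument concludes the proof.

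The most delicate step is the identification $\vph_\infty=\vph_{\KE,0}$: one must combine smooth convergence on the regular part, a pluripotential-theoretic extension of the limit equation across the singular locus, and uniqueness of bounded K\"ahler--Einstein potentials. An alternative route that would avoid the extension step is variational: $\vph_{\KE,t_k}$ minimizes $\D_{t_k}$, and combining Propositions~\ref{bigprop:usc_energy} and \ref{bigprop:DK} with the coercivity argument from the proof of Claim~\ref{Claim} would again force $\vph_\infty=\vph_{\KE,0}$.
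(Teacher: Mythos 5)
Your proposal is correct and follows essentially the same route as the paper: uniform bounds plus the higher-order estimates and Arzel\`a--Ascoli give subsequential $\CC^\infty$-convergence away from $\CZ$, the limit is identified with $\vph_{\KE,0}$ by passing to the limit in the Monge--Amp\`ere equation on $X_0^\reg$, extending both sides across $X_0^\sing$ and invoking uniqueness, and the continuity of $t\mapsto \om_{\KE,t}^k\w[X_t]$ is obtained exactly as in the paper by combining the capacity-smallness of neighborhoods of $\CZ$ from Lemma~\ref{lem:small_cap_near_sing} with the uniform $L^\infty$-bound and the smooth convergence on a locally trivial finite cover of the complement. The only cosmetic differences are your explicit discussion of how the normalization pins down $\sup_{X_t}\vph_{\KE,t}$ and the optional variational identification of the limit, neither of which changes the argument.
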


\begin{proof}
Fix open subsets 
\[
    U_0 \Subset X_0^\reg
    \quad\text{and}\quad
    \CU \Subset \CX \setminus \CZ
    \quad\text{with}\,\,
    \CU \cap X_0 = U_0.
\]
Let $(B_\af, F_\af, \CB_\af)_\af$ be a locally trivial finite cover of $\CU$.
We set 
\[
    \vph_{\af,t} = F^\ast_{\af,t} \vph_{\KE,t}
    \quad\text{and}\quad
    \om_{\af,t} = F^\ast_{\af,t} \om_t.
\]
By \eqref{eq:higher_order_est}, $(\vph_{\af,t})_t$ is locally bounded for any $\CC^j$-norm on $B_\af$.
Then by the Arzela--Ascoli theorem, for every sequence $(t_k)_k$ which goes to zero, one can find a subsequence (that is still denoted by $(t_k)_k$ after relabeling) such that $(\vph_{\af,k})_k$ converges locally smoothly to a function $\vph_0 \in \PSH(B_\af, \om_0)$ on $B_\af$.
Arguing as in Section~\ref{subsec:conv_in_fami}, for every sequence $t_k \xrightarrow[k\to+\infty]{} 0$, we obtain a subsequence $(\vph_{\KE,k})_k$ which $\CC^\infty$-converges to a function $\vph_0 \in \PSH(X_0,\om_0)$ in the family sense. 
On $X_0^\reg$, $\vph_0$ satisfies 
\begin{equation}\label{eq:limit_eq}
    \frac{1}{V}(\om_0 + \ddc_0 \vph_0)^n = e^{-\vph_0} c_0 \mu_0.
\end{equation}
Both sides of \eqref{eq:limit_eq} trivially extend through $X_0^\sing$, by uniqueness of the solution to \eqref{eq:limit_eq}, one obtains $\vph_0 \equiv \vph_{\KE,0}$.
Therefore, $F^\ast_{\af,t} \vph_{\KE,t}$ (resp. $F^\ast_{\af,t} \om_{\KE,t}$) converges locally smoothly towards $\vph_{\KE,0}$ (resp. $\om_{\KE,0}$) on $B_\af$ when $t \to 0$.

\smallskip
Next, fix $\vep > 0$. 
Up to shrinking $\BD$, by Lemma~\ref{lem:small_cap_near_sing}, we find an open set $\CW_\vep$ which contains $\CZ$, and $\CAP_{\om_t}( \CW_\vep \cap X_t) < \vep$ for each $t \in \BD$. 
Then we take a open set $\CU_\vep \Subset \CX \setminus \CZ$ so that $\CU_\vep \cup \CW_\vep = \CX$. 
Set $U_0 = \CU_\vep \cap X_0$.
Let $(B_\af, F_\af, \CB_\af)_\af$ be a locally trivial finite open cover of $\CU_\vep$. 
Take cutoffs $(\chi_\af)_\af$ such that each $\chi_\af$ is supported in $\CB_\af$ and $\sum_\af {\chi_\af}_{|\CU_\vep} = 1$ up to shrinking $\BD$.
Write $\chi := \sum_\af \chi_\af$.

\smallskip
Fix $k \in \{0,\cdots,n\}$, and a test $(n-k,n-k)$-form $\eta$ on $\CX$.
We aim to prove 
\[
    \lim_{t \to 0} \int_\CX \eta \w \om_{\KE,t}^k \w [X_t] = \int_\CX \eta \w \om_{\KE,0}^k \w [X_0].
\]
With the cutoffs $(\chi_\af)_\af$, we can write
\[
    \int_\CX \eta \w \om_{\KE,t}^k \w [X_t] =
    \underbrace{\int_{X_t} (1-\chi) \eta \w \om_{\KE,t}^k}_{= \RN{1}_{t}} + \underbrace{\sum_\af \int_{\CB_\af} \chi_\af \eta \w \om_{\KE,t}^k}_{= \RN{2}_{t}}. 
\]
Since $F^\ast_{\af,t} \om_{\KE,t}$ (resp. $F^\ast_{\af,t}(\chi_\af\eta)_{|B_{\af,t}}$) converges locally smoothly to $\om_{\KE,0}$ (resp. $(\chi_\af\eta)_{|B_\af}$), we have 
\[
    \RN{2}_{t} = \sum_\af \int_{B_\af} F^\ast_{\af,t}(\chi_\af\eta)_{|B_{\af,t}} \w F^\ast_{\af,t} \om_{\KE,t}^k
    \xrightarrow[t \to 0]{} 
    \sum_\af \int_{B_\af} (\chi_\af\eta)_{|B_\af} \w \om_{\KE,0}^k
    = \RN{2}_{0}.
\]

\smallskip
On the other hand, 
\[
    |\RN{1}_t| \leq \int_{X_t \cap \CW_\vep} A_\eta \om_t^{n-k} \w \om_{\KE,t}^k 
\]
where $A_\eta > 0$ is a constant such that $|\eta|_\om \leq A_\eta \om^{n-k}$.
Let $\wvph_{\KE,t}$ be the sup-normalized K\"ahler--Einstein potential on $X_t$. 
By Theorem~\ref{bigthm:L_infty_est}, there is a uniform constant $C > 1$ such that 
\[
    \norm{\wvph_{\KE,t}}_{L^\infty(X_t)} \leq C.
\]
Note that 
\[
    \lt(\om_t + \ddc_t \frac{\wvph_{\KE,t}}{C}\rt)^n = \sum_{l=0}^n \binom{n}{l} \frac{(C-1)^l}{C^n} \om_t^{n-l} \w \om_{\KE,t}^l.
\]
Hence,
\[
    |\RN{1}_t| \leq \frac{A_\eta C^n}{(C - 1)^k} \CAP_{\om_t} (X_t \cap \CW_\vep)
    < \underbrace{\lt(\frac{A_\eta C^n}{(C - 1)^k}\rt)}_{=: A'_{\eta, k}} \vep.
\]
Since $A'_{\eta, k}$ is independent of $\vep$, 
\[
    \abs{\int_\CX \eta \w \om_{\KE,t}^k \w [X_t]
    - \int_\CX \eta \w \om_{\KE,0}^k \w [X_0]} \leq 3 A'_{\eta, k} \vep
\]
for all $t$ sufficiently close to $0$. This finishes the proof of Theorem~\ref{bigthm:L_infty_est}.
\end{proof}

\subsection{Moser--Trudinger inequalities.}\label{sec:Moser--Trudinger}
Variants of the Trudinger inequality on compact K\"ahler manifolds go back to the "Hypothèse fondamentale" of Aubin \cite{Aubin_1984} (proved in \cite{Berman_Berndtsson_2022}). In \cite[Prop. 4.11]{BBEGZ_2019} the coercivity of the Ding functional has been shown to be equivalent to the following Moser--Trudinger inequality: there exist $\delta>1,C>0$ such that
\[
    \lVert e^{-u}\rVert_{L^\delta(\mu)}\leq Ce^{-\E(u)}
\]
for any $u\in\CE^1(X,\om)$. We prove a uniform version of such Moser--Trudinger inequality.

\begin{prop}\label{prop:Moser--Trudinger}
There exist constants $\delta > 1$ and $C > 0$ such that for all $t$ close to $0$, for each $u \in \CE^1(X_t, \om_t)$,
\begin{equation}\label{eq:Uniform_Moser-Trudinger}
    \lVert e^{-u}\rVert_{L^\delta(\mu_t)}\leq Ce^{-\E_t(u)}.
\end{equation}
\end{prop}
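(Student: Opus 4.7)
My plan is to derive Proposition~\ref{prop:Moser--Trudinger} from three ingredients: (1) a uniform linear coercivity of the Ding functional on nearby fibres, (2) the uniform Skoda--Zeriahi estimate for the adapted measures, and (3) a log-convexity interpolation of $L^p$-norms.

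The first step is to upgrade Claim~\ref{Claim} from a subset-level bound to a linear one: there exist $A \in (0,1)$, $B \geq 0$, and $r > 0$ such that
\[
    \D_t(u) \geq A(-\E_t(u)) - B
\]
for all $t \in \BD_r$ and $u \in \CE^1_{\nmlz}(X_t, \om_t)$. The proof is by contradiction. Given a putative violating sequence $(t_k, u_k)$, two regimes arise. If $(-\E_{t_k}(u_k))_k$ is bounded, Propositions~\ref{prop:Hartogs_conv_subseq_family}, \ref{bigprop:usc_energy}, and~\ref{bigprop:DK} allow one to extract a family-convergent subsequence with limit in $\CE^1(X_0, \om_0)$ and pass the Ding functional to the limit, contradicting the coercivity of $\D_0$ provided by Theorems~\ref{Summ_Thm} and~\ref{thm:openness_KE}. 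If $(-\E_{t_k}(u_k))_k$ diverges, cutting the weak geodesic $g_k$ from $0$ to $u_k$ at a fixed large energy level $s_0$ produces a sequence $v_k := g_k(s_0)$ of uniformly bounded Monge--Amp\`ere energy; the convexity of $\D_t$ along weak geodesics combined with the affinity of $\E_t$ (Theorem~\ref{thm:Berndtsson_cvx}) controls $\D_{t_k}(v_k)$ in terms of $\D_{t_k}(u_k)$, reducing to the bounded case and again yielding a contradiction.

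The second step combines Theorem~\ref{thm:SL_and_Skoda_in_family} with the uniform $L^p$-bound \eqref{eq:klt_Lp_estimate} on the densities $f_t$ via H\"older's inequality to produce constants $\alpha' > 0$ and $C_{\alpha'} > 0$ such that
\[
    \int_{X_t} e^{-\alpha'(u - \sup_{X_t} u)}\dd\mu_t \leq C_{\alpha'}
\]
for all $u \in \PSH(X_t, \om_t)$ and all $t \in \bBD_{1/2}$.

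Finally, rewriting the coercivity as $\lVert e^{-u}\rVert_{L^1(\mu_t)} \leq C_1 e^{(1-A)(-\E_t(u))}$ for normalized $u$, the log-convexity of $L^p$-norms interpolates between this $L^1$-growth and the uniform $L^{\alpha'}$-bound of the second step to produce
\[
    \lVert e^{-u}\rVert_{L^\delta(\mu_t)} \leq C\, e^{(1-A)(1-\theta)(-\E_t(u))}
\]
for a suitable choice of $\delta > 1$ and $\theta \in (0,1)$. Since $(1-A)(1-\theta) < 1$ and $-\E_t(u) \geq 0$ for $u \in \CE^1_{\nmlz}$, this yields $\lVert e^{-u}\rVert_{L^\delta(\mu_t)} \leq C e^{-\E_t(u)}$. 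The translation invariance of both sides under $u \mapsto u - c$---together with $\E_t(u-c) = \E_t(u) - c$---extends the inequality to every $u \in \CE^1(X_t, \om_t)$.

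The main obstacle is the first step: promoting the zeroth-order, subset-level control of Claim~\ref{Claim} to a genuine linear coercivity uniform in $t$. The interplay of the family convergence results (Propositions~\ref{bigprop:usc_energy} and~\ref{bigprop:DK}) with the geodesic convexity of the Ding functional (Theorem~\ref{thm:Berndtsson_cvx}) is what makes the upgrade possible, particularly in the delicate case where the Monge--Amp\`ere energy of the violating sequence diverges.
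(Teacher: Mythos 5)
Your Steps 2 and the final normalization remark are fine, but there are two genuine gaps. First, the key analytic step you rely on at the end does not work: the log-convexity of $p\mapsto\log\lVert f\rVert_{L^p}$ only \emph{interpolates} between exponents, it never extrapolates beyond them. Your second step produces a uniform bound on $\int_{X_t}e^{-\alpha'(u-\sup u)}\dd\mu_t$ only for a small exponent $\alpha'$ (it is of alpha-invariant type, further decreased by the H\"older correction $\tfrac{p-1}{p}$ coming from \eqref{eq:klt_Lp_estimate}, so there is no reason to have $\alpha'>1$; for $\BP^n$ the relevant constant is $1/(n+1)$). Interpolating between an $L^1$ bound and an $L^{\alpha'}$ bound with $\alpha'<1$ only yields exponents in $(\alpha',1)$, never a $\delta>1$. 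Producing the exponent $\delta>1$ from coercivity is exactly the nontrivial content of \cite[Prop.~4.11]{BBEGZ_2019}, which the paper invokes (together with Theorem~\ref{thm:SL_and_Skoda_in_family}) to reduce \eqref{eq:Uniform_Moser-Trudinger} to a uniform coercivity statement; it is not a two-line H\"older argument.

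Second, your Step 1 does not close in the unbounded-energy regime. If $u_k$ violates coercivity with constants $A_k\to 0$, $B_k\to\infty$ and $-\E_{t_k}(u_k)=d_k\to\infty$, cutting the geodesic from $0$ at a \emph{fixed} level $s_0$ and using convexity (Theorem~\ref{thm:Berndtsson_cvx}) only gives $\D_{t_k}(v_k)\leq \tfrac{s_0}{d_k}\D_{t_k}(u_k)\leq s_0A_k\to 0$, so in the limit you obtain a potential $v_0$ with $-\E_0(v_0)\leq s_0$ and $\D_0(v_0)\leq 0$ --- which is perfectly compatible with the coercivity of $\D_0$ (the Ding functional is nonpositive at the K\"ahler--Einstein potential), so no contradiction arises. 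The base point $0$ carries no gap information; the paper's proof instead anchors the geodesics at the minimizers: it introduces the shifted functionals $\D^{\KE}_t$, $\E^{\KE}_t$ based at $\vph_{\KE,t}$, uses the uniform oscillation bound of Theorem~\ref{thm:L_infty_est} to pass between the two normalizations, and in the divergent case derives the contradiction from the facts that any limit of $v_k(M)$ must be a \emph{constant} (uniqueness of minimizers of $\D_0^{\KE}$, $\Aut(X_0)$ discrete) and that this constant equals $-M$ up to the uniformly bounded quantities $C_{\MA}, C_{SL}$, which fails for $M$ large. Note also that Claim~\ref{Claim} by itself only bounds the energy on the sublevel set $\{\D_t\leq 0\}$; upgrading this to linear coercivity uniformly in $t$ requires precisely this KE-anchored gap argument (or an equivalent substitute), and your sketch, which never uses the uniform $L^\infty$-estimate at all, is missing it.
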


\begin{proof}
By the klt version of \cite[Lem. 4.4]{DGG2020}, $\mu_t=f_t\om_t^n$ with $0\leq f_t\in L^p(X_t,\om_t)$ for a uniform $p>1$. Thus, combining \cite[Prop. 4.11]{BBEGZ_2019} and Theorem \ref{thm:SL_and_Skoda_in_family}, one deduces that (\ref{eq:Uniform_Moser-Trudinger}) is equivalent to finding $A>0,B>0$ such that for all $v \in \CE^1_{\KE,\nmlz}(X_t, \om_t)$, 
\begin{equation}\label{eq:Uniform_Coercivity}
    \D_t(u) \geq A(-\E_t(u)) - B.
\end{equation}
For any $t$ sufficiently close to $0$, we define
\begin{align*}
    \D^{\KE}_t(v) &= - \E_t^{\KE}(v) - \log\lt(\int_{X_t} e^{-v} e^{-\vph_{\KE}} \dd\mu_t\rt),\\
    \E^{\KE}_t(v) &= \frac{1}{(n+1) V} \sum_{j=0}^n \int_{X_t} v (\om_{\KE,t} + \ddc v)^j \w \om_{\KE,t}^{n-j},
\end{align*}
where 
\[
    v \in \CE_{\KE, \nmlz}^1(X_t,\om_t):= \set{u - \vph_{\KE,t}}{u \in \CE^1(X_t,\om_t) \text{ with } \sup_{X_t} (u - \vph_{\KE,t}) = 0}.
\]
Note that $\E_t(u) - \E_t^{\KE}(u-\vph_{\KE,t})=\E_t(\vph_{\KE,t})$ and $\D_t(u) - \D^{\KE}_t(u-\vph_{\KE,t}) = -\E_t(\vph_{\KE,t})$. Arguing as in Corollary \ref{cor:conti_KE_current} one can also show that $t\to \E_t(\vph_{\KE,t})$ is continuous near $0\in \BD$.

\smallskip
We claim that to prove (\ref{eq:Uniform_Coercivity}) it is sufficient to establish the following uniform coercivity for shifted functionals: there exist positive constants $A \in (0,1)$, $B > 0$ such that for all $t$ sufficiently close to $0$, for all $v \in \CE^1_{\KE,\nmlz}(X_t, \om_t)$, 
\begin{equation}\label{eq:Uniform_Coercivity2}
    \D^{\KE}_t(v) \geq A(-\E^{\KE}_t(v)) - B.
\end{equation}
Indeed, from (\ref{eq:Uniform_Coercivity2}) one can deduce that for all $u \in \CE^1(X_t, \om_t)$ with $\sup_{X_t} (u - \vph_{\KE,t}) = 0$, 
\begin{equation}\label{eq:coercivity_non_normalized}
    \D_t(u) \geq A(-\E_t(u)) - B + (A-1) \E_t(\vph_{\KE,t}).
\end{equation}
From Theorem~\ref{bigthm:L_infty_est}, there is a uniform constant $C_{\MA} > 0$ such that for all $t$ sufficiently close to $0$,  $\osc_{X_t} \vph_{\KE,t} < C_{\MA}$.
Hence, for all $v = u - \vph_{\KE,t} \in \CE^1_{\KE,\nmlz}(X_t, \om_t)$, we have 
\begin{equation}\label{eq:sup_KE_normalize}
    -C_{\MA} \leq \sup_{X_t} u \leq C_{\MA}.
\end{equation}
Set $\wu = u - \sup_{X_t} u \in \CE^1_{\nmlz}(X_t, \om_t)$ for all $u \in \CE^1(X_t, \om_t)$ with $\sup_{X_t} (u - \vph_{\KE,t}) = 0$. 
Combining (\ref{eq:coercivity_non_normalized}) and (\ref{eq:sup_KE_normalize}) we obtain
\begin{align*}
    \D_t(\wu) & \geq A(-\E_t(\wu)) - A \sup_{X_t} u - B + (A-1) \E_t(\vph_{\KE,t})\\
    & \geq A(-\E_t(\wu)) - A C_{\MA} - B + (A-1) C_{\MA},
\end{align*}
which concludes the claim.

\smallskip
It remains to prove (\ref{eq:Uniform_Coercivity2}). 
Suppose by contradiction that there is a sequence $v_k \in \CE^1_{\KE, \nmlz}(X_k, \om_k)$ such that 
\[
    \D^{\KE}_k(v_k) < \frac{1}{k+1} (-\E^{\KE}_k(v_k)) - 1.
\]

\smallskip
\noindent{\bf Case 1:} 
Assume that there is a constant $C > 0$ such that $\E^{\KE}_k(v_k) \geq - C$ for all $k$. 
Note that $v_k$ subconverges to $v_0 = u_0 - \vph_{\KE,0}$ in families where $u_0$ is the limit of $u_k = v_k + \vph_{\KE,k}$.
Propositions~\ref{bigprop:usc_energy}, \ref{bigprop:DK} give $\E^{\KE}_0(v_0)\geq \limsup_{k \to +\infty} \E^{\KE}_k(v_k)\geq -C$ and
\[
    \D^{\KE}_0(v_0) \leq \liminf_{k \to +\infty} \D^{\KE}_k(v_k) \leq -1.
\]
However, $\D^{\KE}_0(0) = 0$ is the minimum, which yields a contradiction. 

\smallskip
\noindent{\bf Case 2:} Up to extracting a subsequence we can now assume that $\E^{\KE}_k(v_k) \to - \infty$ as $k \to +\infty$.
Set 
\[
    d_k := d_1(\vph_{\KE,k}, u_k) = \E_k(\vph_{\KE,k}) - \E_k(u_k)
\] 
and let $(u_k(s))_{s \in [0,d_k]}$ be the unit-speed geodesic connecting $\vph_{\KE,k}$ and $u_k$.
Since $u_k \leq \vph_{\KE,k}$, we have $u_k \leq u_k(s) \leq \vph_{\KE,k}$ for all $s \in [0, d_k]$ and thus, $u_k(s)$ still belongs to $\CE^1_{\KE,\nmlz}(X_k, \om_k)$.
We put $v_k(s) = u_k(s) - \vph_{\KE,k}$.

\smallskip
Fix an arbitrary $M > 0$. 
By Theorem \ref{thm:Berndtsson_cvx}, we have 
{\small
\begin{equation*}
    0 \leq \frac{\D^{\KE}_k(v_k(M)) - \D^{\KE}_k(v_k(0))}{M}
    \leq \frac{\D^{\KE}_k(v_k(d_k)) - \D^{\KE}_k(v_k(0))}{d_k}< \frac{1}{d_k} \lt(\frac{-\E^{\KE}_k(v_k)}{k+1} - 1\rt) = \frac{1}{k+1} - \frac{1}{d_k}.
\end{equation*}
}%
Therefore, 
\[
    \D^{\KE}_k(v_k(M)) \xrightarrow[k \to +\infty]{} 0 = \D^{\KE}_0(0).
\]
Let $v_0^M$ (resp. $u_0^M$) be a limit of a subsequence of $(v_k(M))_k$ (resp. $(u_k(M))_k$). Again Propositions \ref{bigprop:usc_energy}, \ref{bigprop:DK} lead to
\[
    \D^{\KE}_0(v_0^M) \leq \liminf_{k \to +\infty} \D_k^{\KE}(v_k(M)) = 0.
\]
This implies that $v_0^M=c_0^M\in \BR$ since minimizers of $\D_0^{\KE}$ are constants.
By Propositions~\ref{prop:Hartogs_conv_subseq_family} and \ref{lem:L1_integral_converge}, we have $- C_{\MA} - C_{SL} \leq \sup_{X_0} u_0^M \leq C_{\MA}$. 
One then obtains 
\[
    c_0^M=\sup_{X_0} v_0^M \in [- 2C_{\MA} - C_{SL}, 2C_{\MA}].
\]

\smallskip
On the other hand, by Proposition~\ref{bigprop:DK}, we have 
\[
    \int_{X_k} e^{-u_k(M)} \dd\mu_k 
    \xrightarrow[k \to +\infty]{} 
    \int_{X_0} e^{-u_0^M} \dd\mu_0 = \int_{X_0} e^{-\vph_{\KE,0} - c_0^M} \dd\mu_0 = e^{-c_0^M}.
\]
Therefore, 
\begin{align*}
    \E_k(u_k(M)) 
    &= - \D_k(u_k(M)) - \log \int_{X_k} e^{-u_k(M)} \dd\mu_k \\
    &= - \D^{\KE}_k(v_k(M)) + \E_k(\vph_{\KE,k}) - \log \int_{X_k} e^{-u_k(M)} \dd\mu_k
    \xrightarrow[k \to +\infty]{} \E_0(\vph_{\KE,0}) + c_0^M.
\end{align*}
and this implies that
\[
    M = d_1(\vph_{\KE,k}, u_k(M)) 
    = \E_k(\vph_{\KE,k}) - \E_k(u_k(M)) 
    \xrightarrow[k \to +\infty]{} - c_0^M \in [-2C_{\MA}, 2C_{\MA} + C_{SL}].
\]
Choosing $M$ large enough yields a contradiction and concludes the proof.
\end{proof}

\subsection{Generalization to log Fano pairs}\label{sec:log pairs}
Let $\pi: \CX \to \BD$ be a family satisfying Setting~\ref{sett:general_sett_normal_fibre}.
Let $\CD$ be an effective $\BQ$-divisor.
Suppose that
\begin{enumerate}
    \item $K_{\CX/\BD} + \CD$ is a $\BQ$-line bundle;
    \item every irreducible component of $\CD$ surjects onto $\BD$;
    \item $(X_0,D_0)$ is klt where $D_t := \CD_{|X_t}$.
\end{enumerate}

\begin{rmk}
Similar to the case $\CD = 0$, $(\CX, X_0 + \CD)$ is plt near $X_0$ by the inversion of adjunction \cite[Thm.~5.50]{Kollar_Mori_1998}.
Namely, for any log-resolution $p: \CY \to \CX$, we have
\[
    K_\CY + p^{-1}_\ast(X_0 + \CD) 
    = p^\ast(K_\CX + X_0 + \CD) + \sum_i a_i E_i 
\]
with $a_i > -1$ for all $i$. 
In particular, for all $t$ close to $0$, $(X_t,D_t)$ is still klt.
\end{rmk}

Fix $\om \in c_1(-K_{\CX/\BD} - \CD)$ a K\"ahler metric. For each $t \in \BD$, the log K\"ahler--Einstein metrics $\om_{\KE,t} = \om_t + \ddc_t \vph_{\KE,t}$ on $X_t$ are defined through the following Einstein equation:
\[
    \Ric(\om_{\KE,t}) = \om_{\KE,t} + [D_t].
\]
Equivalently, up to translation, the log K\"ahler--Einstein potentials $\vph_{\KE,t}$ solve the complex Monge--Amp\`ere equations
\[
    \frac{1}{V} (\om_t + \ddc_t \vph_{\KE,t})^n = e^{-\vph_{\KE,t}} \mu_t
\]
where the adapted measure is associated with $\om_t$ and $D_t$ as follows.
We assume that $m$ is an integer such that $m (K_{\CX/\BD} + \CD)$ is Cartier.
The adapted measure $\mu_t$ is defined as 
\[
    c_t\ii^{n^2} \lt(\frac{(\Om_t \w \overline{\Om_t})}{|\Om_t|_{h_t}^2}\rt)^{\frac{1}{m}} e^{- {\psi_\CD}_{|X_t}}
\]
where 
\begin{itemize}
    \item $h$ is a metric of $m (K_{\CX/\BD} + \CD)$,
    \item $\Om$ is a local generator of $m(K_{\CX/\BD} + \CD)$ on an open set $U \subset \CX$ and $\Om_t$ is the restriction of $\Om$ on $U \cap X_t$, 
    \item $\psi_\CD$ is a quasi-psh function on $\CX$ such that $\ddc \psi_\CD = [\CD]$,
    \item $c_t$ is chosen so that $\mu_t$ is a probability measure on $X_t$.
\end{itemize}
By the klt version of \cite[Lem. 4.4]{DGG2020}, $\mu_t=f_t\om_t^n$ for $f_t\in L^p(X_t)$ for $p>1$ with uniformly bounded $L^p$-norm. Moreover, $c_t$ is continuous in $t$ and bounded away from $0$ and $+\infty$.

\noindent{L}et $\Aut(X_t,D_t)$ be the stabilizer of $D_t$ in $\Aut(X_t)$.  Uniqueness of log K\"ahler--Einstein metrics holds modulo $\Aut^\circ(X_t,D_t)$, the connected component of the identity in $\Aut(X_t,D_t)$ (cf. \cite[Thm.~5.1]{BBEGZ_2019}).

\smallskip
Arguing as in Sections \ref{sec:L_infty_est} and \ref{sec:higher_order_est}, one obtains the following results.

\begin{thm}
Under the above setting, assume $\Aut^\circ(X_0,D_0)$ is discrete. 
If $(X_0, D_0)$ admits a log K\"ahler--Einstein metric, then for all $t$ sufficiently close to $0$, $(X_t, D_t)$ admits a log K\"ahler--Einstein metric.
\end{thm}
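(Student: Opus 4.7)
The plan is to transcribe the variational proof of Theorem~\ref{thm:openness_KE} to the log Fano pair setting, the essential point being that all the analytic ingredients developed in Sections~\ref{sec:conv_fami}, \ref{sec:usc_energy}, \ref{sec:DK_fami} and Section~\ref{sec:applications} extend without conceptual change once one works with the log-adapted measures.

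First, I would verify that the lower semi-continuity of the log-Ding functional
\[
    \D_t(u) = -\E_t(u) - \log\int_{X_t} e^{-u}\dd\mu_t
\]
along sequences $L^1$-converging in families holds on sets of uniformly bounded Monge--Amp\`ere energy. The upper semi-continuity of $\E_t$ is exactly Proposition~\ref{bigprop:usc_energy}, which only uses that $\om$ comes from a positive $\BQ$-line bundle. The convergence $\int_{X_k} e^{-\gm u_k}\,\dd\mu_k \to \int_{X_0} e^{-\gm u_0}\,\dd\mu_0$ in Proposition~\ref{bigprop:DK} was proved solely from the representation $\mu_t = f_t\om_t^n$ with uniformly bounded $L^p$-norms and the local smooth convergence of $f_t$ on the smooth locus of $\pi$ away from $\CD$. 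Both properties hold here by the stated klt analog of \cite[Lem.~4.4]{DGG2020} applied to the log pair $(\CX/\BD,\CD)$, and by the smoothness of $\psi_\CD$ outside $\CZ \cup \CD$, so Proposition~\ref{bigprop:DK} and thus lower semi-continuity of $\D_t$ transfers. Berndtsson's convexity theorem (Theorem~\ref{thm:Berndtsson_cvx}) extends to log pairs since $\mu_t$ is still of the form $f_t\om_t^n$ with $f_t\in L^p$, so $\D_t$ remains geodesically convex on $\CE^1(X_t,\om_t)$.

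Next, the hypotheses give the log analog of Theorem~\ref{Summ_Thm}: the existence of a log K\"ahler--Einstein metric on $(X_0,D_0)$ together with the discreteness of $\Aut^\circ(X_0,D_0)$ implies the coercivity of $\D_0$ on $\CE^1_{\nmlz}(X_0,\om_0)$, i.e.\ $\D_0(u)\geq A(-\E_0(u))-B$ for some $A\in(0,1)$, $B\geq 0$. With $M := (B+C_{SL})/A$, I would then reproduce verbatim the argument of Claim~\ref{Claim}: for every $\vep>0$ there exists $r_\vep>0$ such that for all $t\in\BD_{r_\vep}$, any $u\in\CE^1_{\nmlz}(X_t,\om_t)$ with $\D_t(u)\leq 0$ satisfies $-\E_t(u)<M+\vep$. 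The contradiction argument uses a family-convergent subsequence of $v_k=g_k(M+\vep)$ (unit-speed geodesic joining $0$ to $u_k$), Propositions~\ref{bigprop:usc_energy} and \ref{bigprop:DK} to pass to the limit, the sup-$L^1$ comparison of Theorem~\ref{thm:SL_and_Skoda_in_family}, and geodesic convexity of $\D_k$; no ingredient other than those already verified above is needed.

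Finally, for each such $t$, $\{u\in\CE^1_{\nmlz}(X_t,\om_t): \D_t(u)\leq 0\}$ is contained in $\{-\E_t\leq M+\vep\}$, which is $L^1$-compact, so the lower semi-continuity of $\D_t$ produces a minimizer $\wvph_t\in\CE^1(X_t,\om_t)$. By the log version of \cite[Thm.~4.8]{BBEGZ_2019}, $\wvph_t$ is a log K\"ahler--Einstein potential on $(X_t,D_t)$. The main subtlety—which is really the only place where the log setting requires care—is ensuring that the Demailly--Koll\'ar convergence Proposition~\ref{bigprop:DK} is robust enough to cover the extra singular weight $e^{-\psi_\CD}$ in $\mu_t$; this is resolved by the uniform $L^p$-bound and the fact that $\psi_\CD$ is smooth on a neighborhood of the locally trivial cover used in the proof, up to shrinking $\BD$ so that $\CD$ is transverse to the chosen charts.
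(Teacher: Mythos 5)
Your proposal is correct in outline and follows essentially the same route as the paper: the paper itself gives no separate proof of this theorem, but simply instructs the reader to rerun the variational argument of Theorem~\ref{thm:openness_KE} with the log-adapted measures, supplying by citation the log versions of the two external ingredients (geodesic convexity of the log Ding functional via \cite[Lem.~4.6]{BBEGZ_2019}, and the coercivity characterization of existence via \cite[Thm.~5.5]{Dinezza_Guedj_2018}), together with the uniform $L^p$ bound on the densities $f_t$ and the control of $c_t$.

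Two of your justifications, however, are not valid as written, even though the facts you need are true. First, geodesic convexity of $\D_t$ is \emph{not} a consequence of $\mu_t$ having the form $f_t\om_t^n$ with $f_t\in L^p$: Theorem~\ref{thm:Berndtsson_cvx} relies on the measure being the adapted measure of an anticanonical (here anti-log-canonical) structure, so that $e^{-u}\dd\mu_t$ corresponds to a metric with the required positivity; for a general $L^p$ density the functional $u\mapsto-\log\int e^{-u}\dd\mu$ need not be convex along geodesics. The correct fix is exactly the paper's: invoke \cite[Lem.~4.6]{BBEGZ_2019}, which is stated for log Fano pairs. Second, in transferring Proposition~\ref{bigprop:DK}, transversality of $\CD$ to the charts does not help: $\psi_\CD$ (hence $f_t$) is unbounded on any chart meeting $\CD$, so the locally trivial cover must be chosen inside $\CX\setminus(\CZ\cup\CD)$, and a small-volume neighborhood of $\CZ\cup\CD$ must be absorbed by the H\"older argument using the uniform $L^p$ bound \eqref{eq:klt_Lp_estimate} and the Skoda-type estimate of Lemma~\ref{lem:Skoda_for_E1_seq}; this is precisely the paper's prescription of replacing $\CZ$ by $\CZ\cup\CD$ in the constructions. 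With these two repairs, your argument coincides with the intended proof.
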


\begin{rmk}
The convexity along geodesics of the log Ding functional 
\[
    \D_t:\CE^1(X_t,\om_t)\to \BR,
\] 
constructed with respect to the aforementioned adapted probability measure $\mu_t$, is still given by \cite[Lem. 4.6]{BBEGZ_2019}. 
Similarly, the equivalence between \ref{item:Summ_Thm_1} and \ref{item:Summ_Thm_2} in Theorem \ref{Summ_Thm} in the log setting can be found in \cite[Thm.~5.5]{Dinezza_Guedj_2018} (see also \cite[Thm.~2.2]{Darvas_2017}).
\end{rmk}

\begin{thm}\label{thm:L_infty_log}
Under the above setting, there exists a constant $C_{\MA}>0$ such that for all $t$ sufficiently close to $0$,
\[
    \osc_{X_t}\vph_{\KE,t}\leq C_{\MA}.
\]
\end{thm}
Similarly, the results stated in Sections \ref{sec:higher_order_est}, \ref{sec:Moser--Trudinger} extend to the log setting of Theorem \ref{thm:L_infty_log}. 
Namely one obtains
\begin{itemize}
    \item uniform higher-order estimates for the log K\"ahler--Einstein potentials on compact sets $K \Subset \CX^\reg \setminus \CD$;
    \item the analog of Corollary \ref{cor:conti_KE_current} replacing in Definition \ref{defn:conv_in_family} $\CZ$ with $\CZ\cup \CD$;
    \item the uniform version of the Moser--Trudinger inequalities of Proposition \ref{prop:Moser--Trudinger} for the adapted measures $\mu_t$ defined at the beginning of this section.
\end{itemize}

\bibliographystyle{smfalpha_new}
\bibliography{biblio}

\end{document}